\documentclass[11pt,english]{article}
\usepackage[margin = 2.5cm]{geometry}

\usepackage{amsmath}
\usepackage{amsthm}
\usepackage{amssymb}
\usepackage{mathtools}
\usepackage{graphicx}
\usepackage{xcolor}
\usepackage[hidelinks, bookmarks = false]{hyperref}
\usepackage{enumitem}
\usepackage{bbm}

\usepackage{caption}
\newtheorem{theorem}{Theorem}[section]

\newtheorem{proposition}[theorem]{Proposition}

\newtheorem{lemma}[theorem]{Lemma}

\newtheorem{definition}[theorem]{Definition}

\newtheorem{conjecture}[theorem]{Conjecture}

\def\b1K{\mbox{\boldmath $K$}_{-1}}
\def\bK{\mbox{\boldmath $K$}}

\newbox\noforkbox \newdimen\forklinewidth
\forklinewidth=0.3pt \setbox0\hbox{$\textstyle\smile$}
\setbox1\hbox to \wd0{\hfil\vrule width \forklinewidth depth-2pt
 height 10pt \hfil}
\wd1=0 cm \setbox\noforkbox\hbox{\lower 2pt\box1\lower
2pt\box0\relax}

\def\sub'm{\prec_{\bK'}}
\def\grpf #1 #2{{\rm grp}_{#2}(#1)}
\def\fldf #1 #2{{\rm fld}_{#2}(#1)}
\def\dclf #1 #2{{\rm dcl}_{#2}(#1)}
\def\rclf #1 #2{{\rm rcl}_{#2}(#1)}
\def\aclf #1 #2{{\rm acl}_{#2}(#1)}
\def\acff #1 #2{{\rm acf}_{#2}(#1)}
\def\strf #1 #2{{\rm str}_{#2}(#1)}
\def\tclf #1 #2{{\rm acf}_{#2}(#1)}

\def\hbar{{\bf h}}

\date{\today}

\newcommand{\F}{\mathcal{F}}

\newcommand{\G}{\mathcal{G}}
\newcommand{\T}{\mathcal{H}}
\newcommand{\D}{\mathcal{D}}

\newcommand{\Z}{\mathcal{Z}}

\newcommand{\Sc}{\mathcal{S}}
\newcommand{\cH}{\mathcal{H}}

\newcommand{\Pc}{\mathcal{P}}

\newcommand{\al}{\alpha}

\newcommand{\ve}{\varepsilon}

\newcommand{\ex}{\mathrm{ex}}
\newcommand{\bip}{\mathrm{bip}}

\usepackage{mathtools}

\newtheorem{prob}{Problem}

\usepackage{xpatch}
\xpatchcmd{\proof}{\itshape}{\normalfont\proofnamefont}{}{}
\newcommand{\proofnamefont}{}
\renewcommand{\proofnamefont}{\bfseries}

\title{Induced rational exponents near two}

\author{Tao Jiang \footnote{Dept. of Mathematics, Miami University, Oxford, OH 45056, USA, {\tt jiangt@miamioh.edu}.  }\and Sean Longbrake \footnote{Dept. of Mathematics, Emory University,  Atlanta, GA 30322, USA {\tt sean.longbrake@emory.edu}} }
\begin{document}

\maketitle

\begin{abstract}
Given a bipartite graph $H$ and a natural number $s$, 
let $\ex^*(n,H,s)$ denote the maximum number of edges in an $n$-vertex graph that contains neither
$K_{s,s}$ nor an induced copy of $H$. Hunter, Milojevi\'c, Sudakov, and Tomon \cite{HMST}  conjectured that
$\ex^*(n,H,s)=O_{H,s}(\ex(n,H))$ whenever $H$ is connected. Motivated by this conjecture and the rational exponents conjecture, Dong, Gao, Li, and Liu \cite{DGLL} conjectured that for every rational $r\in (1,2)$ there is a bipartite graph $H$ and an $s_0$ such that $\ex^*(n,H,s)=\Theta(n^r)$ for all $s\geq s_0$.

We prove that the latter conjecture holds for all rationals $r=2-a/b$, where $a,b\in\mathbb{N}$ satisfy $b\geq \max\{a,(a-1)^2\}$. Our result extends a well-known result of Conlon and Janzer \cite{CJ} to the induced setting and adds more evidence to support the former conjecture.
\end{abstract}

\section{Introduction}

Given a family $\cH$ of graphs, a graph $G$ is {\it $\cH$-free} if $G$ does not contain any member of $\cH$ as a subgraph. The {\it extremal number} $\ex(n,\cH)$ of $\cH$ (also called the {\it Tur\'an number} of $\cH$) is the largest number of edges in an $\cH$-free $n$-vertex graph.
When $\cH$ consists of a single graph $H$, we write $\ex(n,H)$ for $\ex(n,\cH)$. The study of the function $\ex(n,\cH)$ is usually referred
to as the {\it Tur\'an problem}, and it is one of the central problems in extremal graph theory. When $\cH$ consisting only of non-bipartite graphs, the celebrated
Erd\H{o}s-Stone-Simonovits theorem \cite{ES, E-Stone} determines the asymptotics of $\ex(n,\cH)$. When $\cH$ contains a bipartite graph,
much less is known in general. Nevertheless, in recent years, much progress has been made on the Tur\'an problem for bipartite graphs,
particularly on the so-called {\it rational exponents conjecture} of Erd\H{o}s and Simonovits (see, for example \cite{Erdos-problem}).

\begin{conjecture}[(Rational exponents conjecture] \label{conj:ES}
For every rational number $r\in [1,2]$, there exists a graph $H$ with $\ex(n,H)=\Theta(n^r)$.
\end{conjecture}

The biggest breakthrough on the conjecture was made by Bukh and Conlon \cite{BC} who showed that for any rational number $r\in [1,2]$
there exists a finite family $\cH$ of graphs such that $\ex(n,\cH)=\Theta(n^r)$. Following this breakthrough, much progress has been made on
the conjecture in its original form (which asks for a single graph $H$ rather than a family $\cH$) by various authors (see \cite{CJL, Janzer-bipartite, JJM, JMY, Jiang-Qiu2, KKL} for instance) and Conjecture \ref{conj:ES} has now been verified for many values of $r$. In particular, Conlon and Janzer \cite{CJ} 
showed the following.
\begin{theorem} [\cite{CJ}] \label{thm:CJ}
For each rational number of the form $r=2-a/b$, where $a,b$ are natural numbers with $b\geq \{a, (a-1)^2\}$,
there exists a bipartite graph $H$ with $\ex(n,H)=\Theta(n^r)$.
\end{theorem}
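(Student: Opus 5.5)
The strategy is to take, as the paper does, the Conlon--Janzer graph: a theta-type or blown-up tree graph built from a rooted tree. Write the target exponent as $r=2-a/b$. Let $T$ be a rooted balanced tree and $\rho_T=e(T)/(v(T)-|R|)=b/a$, where $R$ is the set of roots. Form $H=T^p$, the graph obtained by taking $p$ copies of $T$ that share the root set $R$ and are otherwise vertex-disjoint. The lower bound $\ex(n,H)=\Omega(n^{2-1/\rho_T})=\Omega(n^{2-a/b})$ comes from Bukh--Conlon random algebraic graphs once $p$ is large enough. The upper bound $\ex(n,H)=O(n^{2-a/b})$ comes from a supersaturation / dependent-random-choice argument in the spirit of Janzer's theorem on balanced rooted trees.

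\textbf{Step 1: the tree.} For $r=2-a/b$ with $b\ge\max\{a,(a-1)^2\}$, I would take $T$ to be a subdivision-like tree. Concretely, it has $a$ leaves as roots plus some internal structure: a path or spider whose legs have lengths chosen so that $e(T)=b$ and $v(T)-|R|=a$. I would then check that $T$ is balanced, meaning every subset $S$ of non-root vertices satisfies $e_S/|S|\ge b/a$, where $e_S$ counts edges incident to $S$. The condition $b\ge (a-1)^2$ is exactly what makes a suitable balanced choice exist. The check is a small optimization over subsets of a caterpillar.

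\textbf{Step 2: the lower bound.} Apply Bukh--Conlon. For a balanced rooted tree $T$ with density $\rho_T$ and $p$ sufficiently large, $\ex(n,\{T^p\})=\Omega(n^{2-1/\rho_T})$. This step is cited, not reproved.

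\textbf{Step 3: the upper bound.} This is the main obstacle. I would pass to a $K$-almost-regular subgraph $G$ of average degree $d=Cn^{1-a/b}$, using the Erd\H{o}s--Simonovits/Jiang--Seiver regularization. I would then count homomorphic copies of $T$ with the roots mapped to a fixed $a$-tuple. Balancedness gives, via a Sidorenko-type or greedy embedding count, at least $\Omega(n^{a-\dots}d^{b})$ copies in total, hence on average $\gg p$ per root tuple. The difficulty is showing that many of these copies are disjoint outside the roots, so that degenerate homomorphisms are negligible. For this I would use Janzer's ``$T$-weakly-disjoint'' counting: bound the number of degenerate copies through a collapsed subtree by induction on the tree, using balancedness at each collapse. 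Pigeonholing then gives some root tuple that is the root set of $p$ pairwise internally disjoint copies, i.e. a copy of $T^p$.

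I expect the key difficulty to lie in the degenerate-copy estimate, which requires strict balance on proper subsets, and in verifying Step 1's balance inequalities under exactly the hypothesis $b\ge (a-1)^2$.
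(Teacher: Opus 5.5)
Your lower bound in Step 2 is fine, but Step 3 has a genuine gap, and it is the main difficulty rather than a technicality.

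Pigeonholing over root tuples gives you a tuple $Z$ carrying many \emph{distinct} copies of $T$. This is essentially how Bukh and Conlon obtain their upper bound for the \emph{family} of all unions of $p$ copies of $T$ on common roots. It does not give $p$ copies that are pairwise disjoint outside $Z$. All copies on $Z$ may meet a set $S_Z$ of fewer than $p(v(T)-|R|)$ vertices, and then some vertex plays one fixed non-root role in a constant fraction of them. Your degenerate-homomorphism estimate does not address this: these copies are injective and merely overlap. (Non-injective homomorphisms are in any case only an $O(1/d)$ fraction globally in an almost-regular host.)

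Upgrading ``distinct'' to ``internally disjoint'' for an arbitrary balanced rooted tree is the Bukh--Conlon conjecture, which is open. Bukh and Conlon constructed balanced rooted trees of every density $b/a>1$, so your Step 3, which never uses $b\ge (a-1)^2$, would prove the rational exponents conjecture outright. For the same reason the hypothesis is misplaced in your Step 1: balancedness imposes no condition like $b\ge (a-1)^2$. Separately, a tree with $v(T)-|R|=a$ and $e(T)=b$ has $b-a+1$ roots, not $a$.

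The paper does not reprove this cited theorem, but the route it follows is its induced version of the Conlon--Janzer argument (proofs of Theorems~\ref{thm:main} and~\ref{thm:height-two}). It uses only the height-two trees $T_{r,t}$ rooted at their leaves, with $\rho=\frac{rt+r}{r+1}$, and it exploits the concentration instead of ruling it out:
\begin{enumerate}
\item Copies containing a $(t+1)$-star whose leaf set has at least $C(H,s)$ common neighbours are discarded. Then a vertex playing $x$ lies in at most $C(H,s)^r$ copies on a fixed leaf vector, so the concentrating vertex must play some $y_i$.
\item After cleaning, all copies on each leaf vector send a fixed $y_i$ (say $y_r$) to a single vertex.
\item Fix the images of the first $(r-1)t$ leaves. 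The bipartite graph $M$ of images of $xy_r$ has $\Omega(|X|d)$ edges and minimum degree $\Omega(|X|d^{t+1}/m^t)$ on the $y_r$-side. The asymmetric embedding lemma (Proposition~\ref{prop:asymmetric}) then produces the forbidden graph once $(t^2+t+1)\frac{rt-1}{rt+r}\ge t^2$, i.e.\ $r\ge t+2$.
\end{enumerate}

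The hypothesis $b\ge\max\{a,(a-1)^2\}$ then comes from bookkeeping. The reduction $F\mapsto F(1)$ (Theorem~\ref{thm:reduction}) sends $2-\frac ab$ to $2-\frac{a}{a+b}$. With $a=r+1$ this gives all $b=(r+1)p+(r-t)$ with $p\ge t$, which covers every $b\ge r^2$ with $b\not\equiv 0,\pm1 \pmod{r+1}$. The residues $0,\pm1$ come from $K_{t,\ell}$, theta graphs and $T_{r,1,1}$.

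To repair your proposal, replace the general Step 3 by an upper bound for a specific family such as $T_{r,t}$, and add the reduction step.
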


Very recently, motivated by applications in discrete geometry and structural graph theory, Hunter, Milojevi\'c, Sudakov, and Tomon \cite{HMST} initiated
a systematic study of the following induced variant of the Tur\'an problem. For a positive integer $s$, let $K_{s,s}$ denote the complete bipartite graph
with $s$ vertices in each part. Given positive integers $n,s$ and a family $\cH$ of graphs, the {\it induced Tur\'an number}
of $\cH$, denoted by $\ex^*(n,\cH,s)$, is the maximum number of edges in an $n$-vertex graph that contains neither a copy of $K_{s,s}$ nor  an induced copy of any member of $\cH$. When $\cH$ consists of a single graph $H$, we write $\ex^*(n,H,s)$ for $\ex^*(n,\cH,s)$.
See \cite{HMST} for a detailed account of the connection of the study of this function to various problems in discrete geometry and structural graph theory (in particular, to the Erd\H{o}s-Hajnal conjecture). When $s$ is sufficiently large, the definition immediately implies that $\ex^*(n,H,s)\geq \ex(n,H)$. Hunter, Milojevi\'c,
Sudakov, and Tomon \cite{HMST} conjectured the following.
\begin{conjecture}[\cite{HMST}] \label{conj:HMST}
For any connected bipartite graph $H$, in fact we must have $\ex^*(n,H,s)\leq C_H(s)\cdot \ex(n,H)$ for
some $C_H(s)$ depending only on $H$ and $s$. 
\end{conjecture}
They provided evidence for the conjecture by considering trees, cycles, the cube graph, and bipartite graphs with degree bounded by $k$ on one side. One of the main results obtained in \cite{HMST} is
\begin{theorem} [\cite{HMST}]
Let $H$ be a bipartite graph with parts $A,B$ such that each vertex in $B$ has degree at most $k$. Then $\ex^*(n,H,s)\leq O_{H,s}(n^{2-1/k})$.
\end{theorem}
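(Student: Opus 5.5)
We prove the HMST theorem: if $H$ has parts $A,B$ with every vertex of $B$ of degree at most $k$, then $\ex^*(n,H,s)=O_{H,s}(n^{2-1/k})$. The plan is the induced analogue of the dependent random choice embedding used for the Füredi--Alon--Krivelevich--Sudakov theorem, with $K_{s,s}$-freeness used to make the embedding induced.

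\textbf{Step 1 (regularisation).} Let $G$ be $n$-vertex, $K_{s,s}$-free, with $e(G)\ge Cn^{2-1/k}$, where $C=C(H,s)$ is large. Pass to a bipartite subgraph with at least half the edges, and then to an almost-regular subgraph in the sense of Erd\H{o}s--Simonovits / Jiang--Seiver. This gives a bipartite $G'$ on $m$ vertices, of average degree $d\ge C' m^{1-1/k}$ and maximum degree $O(d)$. It is still $K_{s,s}$-free, and an induced copy of $H$ in $G'$ is what we seek. Since $G'$ is only a subgraph of $G$, we must either keep it induced or control the lost edges; the cleanest route is to embed $A$ inside one class $U$ of $G$ and $B$ inside the other class $W$, and demand inducedness in $G$ itself.

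\textbf{Step 2 (dependent random choice).} Pick $k$ random vertices of $W$ and let $U_0$ be their common neighbourhood. The standard count shows that, in expectation, $|U_0|$ is large (about $d^k/m^{k-1}\ge C'^k$, and in fact polynomially large after tuning). At the same time, the number of $k$-subsets of $U_0$ with fewer than $L$ common neighbours is small, for any fixed $L=L(H,s)$. Deleting one vertex from each bad $k$-set leaves $U_1$ with $|U_1|\ge R$, where $R$ is a large constant. In $U_1$, every $k$-subset has at least $L$ common neighbours.

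\textbf{Step 3 (making the embedding induced).} This is the main obstacle. Two kinds of unwanted edges must be avoided: edges inside the image of $A$, inside the image of $B$, or between $U$ and $W$; and non-edges of $H$ that become edges between the two images.

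For edges within a class, use Ramsey's theorem. Since $R$ is huge, $G[U_1]$ contains a clique or an independent set of size $R'$. A clique of size $2s$ contains $K_{s,s}$, so it must be an independent set. We may therefore take $U_1$ independent in $G$.

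Now embed $A$ into $U_1$ and embed each $b\in B$, with neighbourhood $N(b)\subseteq A$ and $|N(b)|\le k$, into a common neighbour of the image of $N(b)$. We must choose this common neighbour so that it is adjacent to no other image vertex of $A$, and so that the chosen $B$-images are pairwise non-adjacent.

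Here $K_{s,s}$-freeness does the work. Fix the image $S$ of $N(b)$. Among its $L$ common neighbours, those adjacent to some further vertex $a'$ of the image of $A$ form a set $T_{a'}$. If we select the image of $A$ randomly, or use a supersaturated/Ramsey-type choice over many candidate copies, $K_{s,s}$-freeness bounds how many vertices of $U_1$ can have $s$ or more common neighbours in a fixed large set. Concretely: if more than $(s-1)\binom{|A|}{s}$ common neighbours of $S$ were each adjacent to at least $s$ vertices of the $A$-image, we would get a $K_{s,s}$.

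To exploit this, I would first choose a large set $A^*\subseteq U_1$ and embed $A$ as a random subset of $A^*$. For a candidate $w$, let $t$ be its number of neighbours in $A^*$. Either $t<s$, in which case with high probability $w$ avoids the other chosen $A$-images; or $t\ge s$, and there are few such $w$ because $G$ is $K_{s,s}$-free. Taking $L$ large compared with $|H|$ and $s$ leaves a valid choice for $b$.

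Pairwise non-adjacency among the $B$-images is obtained by applying Ramsey's theorem once more within the candidate pools: choose many candidates for each $b$, note that cliques are bounded by $2s$, and extract an independent transversal, for instance greedily using bounded clique size together with a Ramsey argument on the $L$-sets.

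I expect the bookkeeping in Step 3 to be the crux: making the $B$-images avoid unwanted $A$-neighbours and avoid each other simultaneously, with constants depending only on $H$ and $s$. Steps 1 and 2 are routine.
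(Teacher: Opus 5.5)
Your outline is sound and can be completed, but it takes a different route from the one available in the paper. The paper only quotes this theorem from \cite{HMST}, but its own tools recover it. After regularising, apply Proposition~\ref{prop:asymmetric} with $p=k$ and $\delta_Y=d$; this works since $e(L)d^{k-1}\ge \frac12 md^k=\Omega(C^km^k)$. That proposition does a \emph{one-vertex} dependent random choice: a random $y$ and $T=N(y)$. It shows that more than a $\gamma$-fraction of the $k$-sets in $T$ are rich, where $\gamma$ exceeds the Tur\'an density of $\mathcal{F}_A(H)$. Supersaturation then gives an $m$-blow-up of the neighbourhood hypergraph among rich sets. The Key Lemma~\ref{lem:rich-independent} takes a random transversal of this blow-up. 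Independence of $\phi(A)$ comes from K\H{o}v\'ari--S\'os--Tur\'an between parts, and the non-edge condition from the bad sets of Lemma~\ref{lem:small-bad-set}. Finally $B$ is embedded by Hall's theorem plus a greedy choice avoiding bad sets.

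You instead use three different ingredients. First, $k$-fold dependent random choice with deletion gives a set in which \emph{every} $k$-set is rich. Second, Ramsey (no $K_{2s}$) gives independence on the $A$-side. Third, for the non-edges you use the dual count (at most $(s-1)\binom{|A^*|}{s}$ vertices see $s$ or more points of $A^*$) plus a random placement of $A$. Your version is more elementary, with no hypergraph supersaturation, and is closer to the F\"uredi/Alon--Krivelevich--Sudakov proof. The paper's version needs only a blow-up inside the rich family and uses the minimum degree on the $Y$-side. That is what lets the same lemma be reused with Proposition~\ref{prop:heavy-stars} and in the unbalanced setting of the proof of Theorem~\ref{thm:height-two}, where no fully rich set is available.

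Two points in your Step 3 need fixing.

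First, bounded clique size does not give an independent transversal: two pools could a priori be complete to each other without creating a large clique. What you need is $K_{s,s}$-freeness \emph{across} pools. If every pool has size at least $p$, Lemma~\ref{lem:KST} bounds the edge density between any two pools by $O(sp^{-1/s})$. So a uniformly random transversal is independent and injective with positive probability for $p$ large. Alternatively, choose greedily outside the bad sets of Lemma~\ref{lem:small-bad-set}, as in the Key Lemma.

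Second, this requires large pools, not just one valid choice per $b$. Take $L\gg (s-1)\binom{|A^*|}{s}$, so the constants are fixed in the order $|A^*|$, then $L$, then the Ramsey size, then $C'$. Then use Markov's inequality to see that, with positive probability, the random placement of $A$ kills at most half of each candidate set simultaneously for all $b$.

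Two minor points. At this density $|U_0|$ is only a large constant, not polynomially large, though a constant suffices. Your dichotomy $t<s$ presumes $k<s$; for $k\ge s$ the theorem is immediate from K\H{o}v\'ari--S\'os--Tur\'an.
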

In particular, this extends a well-known result of F\"uredi \cite{Furedi} and Alon, Krivelevich, and Sudakov \cite{AKS} which states that $\ex(n,H)=O_H(n^{2-1/k})$ for such an $H$.

Motivated  by the theorem of Bukh and Conlon \cite{BC} and the induced Tur\'an problem, Dong, Gao, Li, and Liu \cite {DGLL} 
established the following extension of the Bukh-Conlon Theorem to the induced setting.
\begin{theorem} [\cite{DGLL}]
For every rational $r=\frac{a}{b}\in (1,2)$, where $a,b$ are positive integers, there exists a family $\cH$ consisting of at most $2^a$ bipartite graphs 
such that $\ex^*(n,\cH,s)=\Theta(n^r)$.
\end{theorem}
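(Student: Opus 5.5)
The plan is to follow the Bukh–Conlon framework and adapt only the upper bound to the induced setting. First I write $r=2-a'/b'$ in lowest terms and take the balanced rooted bipartite tree $T$ from \cite{BC} with root set $R$ and density $\rho_T=b'/a'$, together with its $p$-th powers $T^p$. These are the graphs obtained by gluing $p$ copies of $T$ along $R$. The family $\cH$ will consist of bipartite graphs built from $T^p$ for a suitable large $p$. Each member is obtained from $T^p$ by prescribing some additional adjacency pattern, and this pattern is indexed by subsets of a set of size $a$; this accounts for the bound of $2^a$ on $|\cH|$. The one property I need of the family for the lower bound is that every member contains $T^p$ as a (not necessarily induced) subgraph.

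\textbf{Lower bound.} Bukh and Conlon's random algebraic construction gives $n$-vertex graphs with $\Omega(n^r)$ edges and no copy of $T^p$ at all, hence no induced copy of any member of $\cH$. Bukh's random polynomial method also shows that these graphs are $K_{s,s}$-free once $s$ is a large constant. The argument is the standard one: the common neighbourhood of $s$ vertices is a variety of bounded complexity, so it is either of bounded size or large, and the latter occurs with tiny probability. Removing one vertex from each bad configuration gives $\ex^*(n,\cH,s)=\Omega(n^r)$.

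\textbf{Upper bound.} Let $G$ be $K_{s,s}$-free with $Cn^r$ edges. I would proceed in three steps.
\begin{enumerate}[label=(\roman*)]
\item Pass to a $K$-almost-regular subgraph by the Erd\H{o}s--Simonovits regularization.
\item Use the balanced supersaturation counting of \cite{BC} for $T$. This yields a root tuple $x\in V(G)^{R}$ having at least $L$ copies of $T$ rooted at $x$, where $L$ is huge compared with $s$ and $p$.
\item Extract an induced copy from among these $L$ copies. I form an auxiliary structure on the copies and colour each pair or $p$-tuple of copies according to which non-tree edges of $G$ appear among its vertices. Ramsey's theorem then gives $p$ copies with a homogeneous pattern.
\end{enumerate}
In step (iii), $K_{s,s}$-freeness rules out the dense patterns. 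If many copies were pairwise joined by edges between corresponding non-root vertices, a $K_{s,s}$ would appear, so across distinct copies the homogeneous pattern must be the empty one. Edges inside a single copy and edges touching the roots cannot be killed this way. I handle them by first using the Bukh--Conlon count to pass to copies that are vertex-disjoint outside $x$, and then pigeonholing on the internal edge pattern. The adjacency pattern that survives is exactly the extra structure that the members of $\cH$ are allowed to carry, so the resulting $p$ copies span an induced member of $\cH$.

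\textbf{Main obstacle.} The hardest step is making step (iii) rigorous, because the $L$ copies of $T$ overlap heavily. Before Ramsey can be applied cleanly, I would first pass to a large subfamily of copies that are pairwise disjoint outside $R$, using a sunflower-type argument. The family $\cH$ has to be designed so that every pattern which $K_{s,s}$-freeness cannot exclude is already a member, while each member still contains $T^p$; I expect this bookkeeping to be where the $2^a$ count arises.
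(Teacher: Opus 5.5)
The paper only quotes this result and gives no proof of it, so I am judging your argument on its own terms.

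\textbf{The gap.} It lies in step (iii), where you use the Bukh--Conlon count to pass to copies of $T$ that are vertex-disjoint outside $x$. It is tied to your requirement that every member of $\cH$ contain $T^p$.

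The count only gives many \emph{distinct} copies of $T$ rooted at $x$. Nothing stops all of them from passing through one vertex that plays the role of a fixed non-root vertex of $T$. This is exactly why Bukh and Conlon work with the family of all unions of $p$ distinct copies of $T$ agreeing on $R$. It is also why the present paper needs Lemma~\ref{lem:robust-cleaning} and the special shape of $T_{r,t}$ to prove Theorem~\ref{thm:height-two}.

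A sunflower argument does not repair this. It yields copies whose pairwise intersections equal a common kernel, and that kernel may contain non-root vertices of $T$.

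In fact, your upper bound for a family all of whose members contain $T^p$ would settle an open problem. A $T^p$-free graph is automatically $K_{s,s}$-free for $s\ge |V(T^p)|$, since $K_{s,s}$ contains every bipartite graph with at most $s$ vertices per side. Such a graph also contains no member of $\cH$, even non-inducedly. So you would get $\ex(n,T^p)=O(n^r)$ for the Bukh--Conlon trees at every rational $r$, which is Conjecture~\ref{conj:ES} in full. Step (iii) therefore cannot be a routine consequence of counting plus a sunflower lemma.

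\textbf{The fix.} What you need is a different family, not extra adjacency patterns. Take $\cH=\{T^p_{R\cup S}: S\subsetneq V(T)\setminus R\}$, the powers of $T$ rooted at $R$ together with a proper subset $S$ of the non-root vertices. The upper bound then goes as follows.
\begin{enumerate}
\item Count labeled copies of $T$ that are \emph{induced} in $G$, which is possible in a $K_{s,s}$-free almost-regular graph (as in Lemma~\ref{lem:countingtrees}).
\item Pigeonhole a root tuple carrying many of these copies.
\item Apply the sunflower lemma to their vertex sets and pigeonhole the roles of the kernel vertices. This gives many induced copies of $T$ forming a copy of $T^{P}_{R\cup S}$ for some proper $S$; the petals are nonempty because the vertex sets are distinct.
\item Run the Ramsey argument of Lemma~\ref{lem:semi-induced-to-induced}, with $R\cup S$ as the common part, to kill edges between different petals.
\end{enumerate}
Every other pair of vertices lies inside a single copy, which is already induced. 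So no internal or root edges need to be absorbed into $\cH$.

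Your idea of pigeonholing internal edge patterns and putting them into $\cH$ would in general produce non-bipartite graphs, because $G$ may have edges inside one side of $T$. It also does not justify the $2^a$ count. With the family above, $|\cH|<2^{|V(T)\setminus R|}$, and this choice of $S$ is where an exponential bound of the form $2^a$ comes from.

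For the lower bound, each $T^p_{R\cup S}$ is a union of $p$ distinct copies of $T$ agreeing on $R$, so it is absent from the Bukh--Conlon graphs. These graphs are then $K_{s,s}$-free for all large $s$ for free, so your random-polynomial argument about common neighbourhoods is unnecessary.
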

They also put forth the following induced version of the rational exponents conjecture.

\begin{conjecture} [Induced Rational Exponents Conjecture, \cite{DGLL} Conjecture 1.1] \label{conj:DGLL}
For every rational number $r\in (1,2)$, there exist a bipartite graph $H$ and
a constant $s_0$ such that $\ex^*(n,H,s)=\Theta_s(n^r)$ for any $s\geq s_0$.
\end{conjecture}
We remark that though the $s\geq s_0$ condition was not explicitly stated in \cite{DGLL}, it was implicit in the discussions. 
Dong, Gao, Li, and Liu  \cite{DGLL} also provided further evidence to Conjecture \ref{conj:HMST} by proving optimal bounds 
for the maximum size of $K_{s,s}$-free graphs without an induced copy of theta graphs or prism graphs.

In this paper, we establish the induced extension of the theorem of Conlon and Janzer (Theorem \ref{thm:CJ}), and thereby
prove Conjecture \ref{conj:DGLL} for all rationals $r\in (1,2)$ of the form $2-a/b$, where $a,b$ are positive integers where $b\geq \max\{a, (a-1)^2\}$.

\begin{theorem} [Main theorem] \label{thm:main}
For each rational number of the form $r=2-a/b$ where $a,b$ are natural numbers with $b\geq \{a, (a-1)^2\}$,
there exist a bipartite graph $H$ and a constant $s_0$ such that $\ex^*(n,H,s)=\Theta(n^r)$ for any $s\geq s_0$.
\end{theorem}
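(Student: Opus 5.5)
We will take $H$ to be the graph used by Conlon and Janzer in Theorem~\ref{thm:CJ}. The lower bound is then immediate. Theorem~\ref{thm:CJ} gives $\ex(n,H)=\Omega(n^{2-a/b})$, and the extremal constructions there (random algebraic, Bukh--Conlon style) are also $K_{s,s}$-free once $s$ is large enough, since $K_{s,s}\supseteq H$ for $s\ge |V(H)|$. Any $H$-free graph contains no induced copy of $H$, so $\ex^*(n,H,s)\ge \ex(n,H)=\Omega(n^r)$ for $s\ge s_0:=|V(H)|$. All the work is in the upper bound $\ex^*(n,H,s)=O_s(n^{2-a/b})$.

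Recall the shape of $H$. Start from the theta-type graph $\theta_{a,b}$-like structure: a rooted tree $T$ whose root set $R$ is independent, such that every nonempty $S\subseteq V(T)\setminus R$ satisfies the balanced density condition $e_S\ge \rho |S|$ with $\rho=b/a$. Then $H$ is the $p$-blowup $T^p$, obtained by taking $p$ copies of $T$ glued along $R$. Conlon and Janzer take $T$ to be a ``broom''/path-like tree with $a$ leaves in $R$. The condition $b\ge (a-1)^2$ is what makes the Bukh--Conlon balancedness hold for this tree.

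The upper bound follows the Janzer/Conlon--Janzer counting scheme. After regularizing to a $K$-almost-regular host $G$ (Jiang--Seiver / Erd\H{o}s--Simonovits regularization) with degree $d\gg n^{1-a/b}$, we count homomorphic copies of $T$ with the leaves fixed. For an $a$-set $L$ of vertices, let $f(L)$ be the number of embeddings of $T$ with root set mapped to $L$. Balancedness and the degree lower bound give $\sum_L f(L)\gg n^a\cdot d^{e(T)}/n^{v(T)-a}$, which is much larger than $p\cdot n^a$ times the number of degenerate copies. Hence some $L$ carries many genuinely distinct copies of $T$.

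The new ingredient is the induced condition. Among those copies we want $p$ that are internally vertex-disjoint and have no edges of $G$ between them other than the edges of $T$ itself; such a family spans an induced $T^p$. We use $K_{s,s}$-freeness in the following way. Build an auxiliary graph on the copies of $T$ rooted at $L$, joining two copies when they intersect or when $G$ has an edge between their non-root parts. An independent set of size $p$ in this auxiliary graph gives an induced $H$ (the roots are independent as well, provided we arrange that $L$ is independent, which we can force by passing first to a large bipartite or ``spread'' subgraph). Otherwise Ramsey's theorem yields a large clique. A large clique of pairwise ``adjacent'' copies, with pigeonholing over which vertex positions carry the edges, produces two large vertex sets inside a bounded number of positions with complete bipartite connections between them, using the Kővári--Sós--Turán supersaturation on the bipartite graph between positions. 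That is a $K_{s,s}$, a contradiction.

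This dichotomy step is the main obstacle. A clique in the auxiliary graph only guarantees one edge per pair, not a complete bipartite pattern. To get around this we will use the dependent random choice / ``$K_{s,s}$-free graphs have few dense pairs'' lemma of Hunter--Milojevi\'c--Sudakov--Tomon. In a $K_{s,s}$-free graph, the number of pairs of vertices with many common neighbours, and the number of edges inside neighbourhoods, are controlled by $O_s(n^{2-1/s})$-type bounds. The plan is to show that copies of $T$ containing a ``bad'' extra edge number at most $o(f(L))$ on average, which is where the margin $d\gg n^{1-a/b}$ gets spent. We can then select $p$ disjoint copies with no extra edges greedily, once degenerate and bad copies are discarded.
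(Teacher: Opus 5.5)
Your lower bound is fine and matches the paper. The upper bound has a genuine gap at the step ``hence some $L$ carries many genuinely distinct copies of $T$'', and again at the final greedy selection of $p$ disjoint copies.

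Even if a root vector carries many distinct copies of $T$, nothing in your argument stops all of them passing through one common non-root vertex, such as the same image of the centre or of some child $y_i$. Then no two of them are disjoint outside the roots. Ruling this out from balancedness and the degree bound alone is the open upper-bound half of the Bukh--Conlon conjecture. A generic argument of the kind you describe would prove the rational exponents conjecture outright, since every rational in $(1,2)$ is $2-1/\rho$ for some balanced rooted tree.

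Conlon--Janzer, and this paper in the induced setting, overcome this only for special trees: $T_{r,t}$ with $r\ge t+2$ (Theorem~\ref{thm:height-two}) and $T_{r,1,1}$ (Theorem~\ref{thm:asminus1}). The paper's argument is structural:
\begin{itemize}
\item Admissibility (no $C(H,s)$-heavy $(t+1)$-stars, via Proposition~\ref{prop:heavy-stars} and the blowup Lemma~\ref{lem:rich-independent}) bounds how often one vertex can be the image of $x$.
\item For each leaf vector, a hitting set of size less than $\lambda(r+1)$ then forces many copies to share the image of some $y_i$.
\item After the cleaning of Lemma~\ref{lem:robust-cleaning}, one forms the bipartite graph $M$ of images of $xy_r$, whose $Y$-side has large minimum degree.
\item Proposition~\ref{prop:asymmetric} then produces the induced $H$; the inequality $(t^2+t+1)\frac{rt-1}{rt+r}\ge t^2$ is where $r\ge t+2$ is used.
\end{itemize}
Nothing in your sketch plays this role. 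Bounds on dense pairs or on edges inside neighbourhoods in $K_{s,s}$-free graphs say nothing about copies concentrating at a single vertex.

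You have also misidentified the graphs. For general $a/b$ the Conlon--Janzer graph is not $T^p$ for a tree rooted at $a$ independent leaves. One starts from $T_{r,t}$, whose root set is its $rt$ leaves, with $a=r+1$ non-root vertices and $b=rt+r$ edges. One then iterates $F\mapsto F(1)$, which adds two \emph{adjacent} root vertices complete to opposite sides. The condition $b\ge (a-1)^2$ records which exponents this iteration reaches, not balancedness; the classes $b\equiv 0,\pm1 \pmod a$ are supplied by $K_{t,\ell}$, theta graphs and $T_{r,1,1}$. The induced setting therefore needs an induced Erd\H{o}s--Simonovits reduction. The paper proves this as Theorem~\ref{thm:reduction}, by counting induced $K_{2,2}$'s and finding an induced $F$ in $L[N(x)\setminus N(y),N(y)\setminus N(x)]$ for a suitable edge $xy$.

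Finally, the step you call the main obstacle is the easy one. Suppose the copies are individually induced in $G$ and pairwise disjoint outside the roots. (The paper counts only such induced copies, Lemma~\ref{lem:countingtrees}; passing to a bipartite subgraph does not make the roots independent in $G$.) Colour pairs of copies by the positions $(i,j)$ of an edge between them and apply multicolour Ramsey. Either you get $p$ mutually non-adjacent copies, or a monochromatic $K_{2s}$ gives a $K_{s,s}$ directly (Lemma~\ref{lem:semi-induced-to-induced}). Joining copies merely because they intersect, as you propose, makes the clique outcome non-contradictory, and that is just the disjointness problem above again.
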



\section{Auxiliary results and deduction of the main result}

In order to prove our main result, we consider a slightly stronger notion of induced Tur\'an numbers.
Given a graph $G$ and a partition of $V(G)$ into two sets $X,Y$, let $G[X,Y]$ denote the spanning bipartite subgraph
of $G$ with parts $X$ and $Y$  consisting of all the edges of $G$ between $X$ and $Y$.
Given a positive integer $s$ and a bipartite graph $H$, let $\ex^*_{\bip}(n,H,s)$ denote the minimum $M$ such
that for any $K_{s,s}$-free $n$-vertex graph $G$ and any partition of $V(G)$ into $X,Y$ if $e(G[X,Y])>M$ then 
$G[X,Y]$ contains a copy of $H$ that is an induced subgraph of $G$. Observe briefly that $\ex_{\bip}^*(n, H, s) \geq \frac{1}{2}\ex^*(n, H, s)$, so giving an upper bound on $\ex_{\bip}^*(n, H, s)$ gives an upper bound on $\ex^*(n, H, s)$. 

Next, we mention a few definitions originally used by Bukh and Conlon \cite{BC}
which were extended by Kang, Kim, Liu \cite{KKL}.
Let $F$ be a graph and $R$ a proper subset of $V(F)$, called the {\it root set}.
For each nonempty subset $S\subseteq V(F)$, let $e_S$ denote the number of edges of $F$ incident to a vertex in $S$ and let $\rho_F(S)=\frac{e_S}{|S|}$.
Let $\rho(F)=\rho_F(V(F)\setminus R)$. We say that $(F,R)$ (or $F$ if $R$ is clear) is {\it balanced} if $\rho_F(S)\geq \rho(F)$ holds for each nonempty $S\subseteq V(F)\setminus R$. 

Let $F_R^\ell$ denote the graph consisting of $\ell$ labeled copies $F_1,\dots, F_\ell$ of $F$ that
such that for each $v\in R$, the images of $v$ in $F_1,\dots F_\ell$ are the same
and that $F_1,\dots, F_\ell$ are pairwise vertex disjoint outside the common image of $R$.
We call $F_R^\ell$ the {\it $\ell$-th power} of $F$ {\it rooted at $R$}.
When the context is clear, we will drop the subscript $R$. 

Bukh and Conlon \cite{BC} proved the following celebrated result.
\begin{lemma} [\cite{BC}] \label{lem:BC-lower}
For every balanced rooted bipartite graph $(F,R)$ with $\rho(F)>0$, there exists 
a positive integer $\ell_0=\ell_0(F)$ such that for all $\ell\geq \ell_0$, $\ex(n,F_R^\ell)=\Omega(n^{2-\frac{1}{\rho(F)}})$.
\end{lemma}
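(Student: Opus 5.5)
We would prove Lemma~\ref{lem:BC-lower} by the random algebraic method of Bukh and Conlon. Write $F$ with root set $R$, $|R|=r_0$, non-root vertex set $U=V(F)\setminus R$ with $|U|=v$, and $e=e(F)-e(F[R])$ the number of edges meeting $U$, so that $\rho=\rho(F)=e/v$. Let $q$ be a prime power and take $G$ bipartite with parts $A=B=\mathbb{F}_q^{t}$, where $t$ will be chosen with $v/e\approx 1/\rho$ in mind. Fix $d$ large in terms of $F$. Choose independent uniformly random polynomials $f_1,\dots,f_s\colon\mathbb{F}_q^{t}\times\mathbb{F}_q^{t}\to\mathbb{F}_q$ of degree at most $d$ in each of the two blocks of variables. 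Join $x\in A$ to $y\in B$ when $f_i(x,y)=0$ for all $i$. Each pair is then an edge with probability $q^{-s}$, so $n=2q^t$ and $\mathbb{E}\,e(G)\approx q^{2t-s}$. We choose $s,t$ with $s/t=1/\rho$, i.e.\ $t=e$ and $s=v$ after scaling, so that $e(G)\approx n^{2-1/\rho}$.

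The key step is to control copies of $F_R^\ell$. Fix an image $\phi(R)$ of the roots in $G$'s vertex set and let $X$ be the number of extensions of $\phi$ to a copy of $F$ with $U$ mapped injectively. First we compute moments. For $d$ large relative to the number of points involved, the values of the random polynomials at any collection of at most $D$ distinct points are independent and uniform (standard lemma). Hence $\mathbb{E}[X^k]$ is a sum over configurations of $k$ copies sharing $\phi(R)$. A configuration whose union has non-root set $S$ contributes about $q^{t|S|-s\,e_S}$. Balancedness, $\rho_F(S)\ge\rho$ for every nonempty $S\subseteq U$ (applied to unions of overlaps), together with $s/t=1/\rho$, makes every term $O(1)$. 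Thus $\mathbb{E}[X^k]=O_k(1)$ for each fixed $k$, where we take $k$ such that $d$ exceeds the required bound.

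Next we need the Lang--Weil dichotomy. The set of extensions is the $\mathbb{F}_q$-points of a variety defined by equations of bounded complexity, in the free variables for $U$ with $\phi(R)$ fixed. Results of Bukh--Conlon, via Lang--Weil, give that either $X\le C$ or $X\ge q/2$, with $C$ depending only on $F$, $d$, $s$, $t$. Markov's inequality with the $k$-th moment then yields $\Pr[X>C]=\Pr[X\ge q/2]\le \mathbb{E}[X^k]/(q/2)^k=O(q^{-k})$.

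Finally we delete. Call $\phi(R)$ bad if $X>C$, and set $\ell_0=C+1$ so that every copy of $F_R^{\ell}$ with $\ell\ge\ell_0$ uses a bad root tuple. The number of root tuples is at most $q^{tr_0}$, so the expected number of bad tuples is $O(q^{tr_0-k})$. Choosing $k>tr_0$ large makes this $o(1)$. Alternatively, we delete one vertex of each bad tuple, losing at most $o(q^{2t-s})$ edges in expectation, since each vertex has degree about $q^{t-s}$, after also trimming for concentration of degrees. We take $q$ large and handle general $n$ by interpolation between prime powers. This leaves an $F_R^\ell$-free graph with $\Omega(n^{2-1/\rho})$ edges.

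The main obstacle is the moment computation. We must show that every overlap pattern of $k$ copies contributes $O(1)$ using only balancedness; this requires $\rho_F$ of a union of copies to be at least $\rho$. This holds because the edges of the union meeting its non-root part can be counted copy by copy, adding new non-root vertices $S_j$ each with at least $\rho|S_j|$ new edges. Degeneracies where points coincide, which break independence, are absorbed into the requirement of injectivity on $U$ together with choosing $d\ge k v$.
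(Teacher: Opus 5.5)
Your proposal is correct and is the Bukh--Conlon random algebraic argument itself: a random polynomial graph with $s/t=1/\rho$, the bound $\mathbb{E}[X^k]=O_k(1)$ from balancedness by counting new edges copy by copy, the Lang--Weil dichotomy applied to $W(\mathbb{F}_q)\setminus D(\mathbb{F}_q)$ with $D$ the non-injective locus, then Markov's inequality and deletion. The paper does not reprove this lemma; it cites it from Bukh and Conlon together with the remark of Kang, Kim, and Liu that only balancedness is needed, and your moment computation confirms this, so non-tree $F$ and non-independent $R$ are covered.

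The only loose end is the final step. Instead of the vague degree-concentration alternative, take $k>tr_0+s$ and then $d$ large in terms of $k$, so that $C$ and $\ell_0=C+1$ depend only on $F$. This gives $\mathbb{E}[e(G)-n^2B]\ge\frac12q^{2t-s}$, where $B$ is the number of bad root tuples, and any outcome attaining this has $B=0$ and $e(G)\ge\frac12q^{2t-s}$.
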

As noted by Kang, Kim, and Liu \cite{KKL}, in the original statement of Bukh and Conlon, $F$ is a tree and $R$ is an independent set, but the statement still holds without these
extra assumptions. 

Let $r,t$ be positive integers. Let $T_{r,t}$ denote the height two tree obtained from an $r$-star $S$ by joining $t$ leaves to each
leaf of $S$. Let $R$ be the set of leaves of $T_{r,t}$. For each positive integer $\ell$, let $F_{r,t}^\ell=[T_{r,t}]^\ell_R$.
 We prove the following induced analogue of 
Theorem 1.5 of Conlon and Janzer \cite{CJ} (see \cite{JJM} for a weaker version of the theorem). This is the most important ingredient of our proof of the main result.

\begin{theorem} \label{thm:height-two}
Let $\ell,r,s,t$ be positive integers, where $r\geq t+2\geq 3$. Then $\ex^*_{\bip}(n,F^\ell_{r,t},s)=O(n^{2-\frac{r+1}{rt+r}})$.
\end{theorem}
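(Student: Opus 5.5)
We follow the Conlon--Janzer counting strategy for the height-two trees $T_{r,t}$ and add an "induced cleaning" layer that uses the $K_{s,s}$-freeness of $G$. Throughout, $\rho(T_{r,t})=\frac{rt+r}{rt+1}$ with roots the $rt$ leaves. Moreover $2-\frac{1}{\rho}=2-\frac{rt+1}{rt+r}$, which we must reconcile with the claimed exponent $2-\frac{r+1}{rt+r}$. The target corresponds to a graph of average degree $d=Kn^{1-\frac{r+1}{rt+r}}=Kn^{\frac{rt-1}{rt+r}}$, so we argue with $d$ directly.

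\emph{Step 1: regularization.} Let $G$ be $K_{s,s}$-free with a partition $X,Y$ and $e(G[X,Y])\ge Cn d$. By a standard Jiang--Seiver/Erd\H{o}s--Simonovits type regularization, we pass to a subgraph $G'\subseteq G[X,Y]$ on $m$ vertices that is $K$-almost-regular with degree $\delta\approx m^{\frac{rt-1}{rt+r}}$. Inducedness is always checked against $G$ itself, so the edges of $G$ inside $X$, inside $Y$, and across that we discarded still matter.

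\emph{Step 2: counting rooted copies.} Count homomorphic copies of $T_{r,t}$ in $G'$ with the centre in one side. The number of copies is at least $m\delta^{r}\cdot\delta^{rt}$. The roots are $rt$-tuples of vertices, and there are at most $m^{rt}$ of them. The exponent is chosen precisely so that $m\delta^{r+rt}\gg m^{rt}\cdot(\text{something large})$, namely
\[
1+(r+rt)\tfrac{rt-1}{rt+r}=rt.
\]
As a result, some root tuple (indeed most of the mass) extends in at least $L$ ways, with $L$ large when $C$ is large. We then need $\ell$ of these extensions that are vertex-disjoint outside the roots and that together induce exactly $F^\ell_{r,t}$ in $G$.

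\emph{Step 3: degeneracy and induced cleaning.} This is the main obstacle. Following Conlon--Janzer, we call a copy bad if some proper sub-configuration (a vertex of the copy, or a sub-star) is shared by too many extensions. Because $r\ge t+2$, the balancedness-type inequalities hold, so bad copies are a negligible fraction of all copies, and among good copies with a fixed root we can greedily select $\ell$ disjoint ones. For inducedness, we need the selected copies to avoid every non-edge of $F^\ell_{r,t}$. Here we plan to use the following tool. In a $K_{s,s}$-free graph, if a root tuple has $L$ extension families, then by a Ramsey/K\H{o}v\'ari--S\'os--Tur\'an argument applied to the auxiliary graph on extensions (two extensions are adjacent if $G$ has an edge between them), there is a large subfamily that is pairwise non-adjacent. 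Otherwise a dense bipartite pattern among many disjoint stars would produce a $K_{s,s}$. Within a single copy, unwanted chords (centre to root-leaf, a middle vertex to another middle vertex's leaves, or a middle vertex to a middle vertex) are handled in two ways. First, we count only copies that have no such chords, by iteratively bounding the copies that do contain a chord: a chord turns the copy into a denser graph whose count is smaller by a factor that is a power of $\delta$. If that fails, we use dependent random choice together with $K_{s,s}$-freeness to pass to a subgraph in which every vertex has fewer than $s$ neighbours inside the relevant neighbourhood sets. Chords between a copy and the roots are the hardest case. Each middle vertex must be adjacent to exactly its own $t$ root leaves, and we plan to control this by requiring, through the choice of which extensions we count, that for each root leaf only $O_s(1)$ middle vertices among the $L$ extensions see it wrongly. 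The final selection of $\ell$ copies then goes through by a greedy deletion argument.
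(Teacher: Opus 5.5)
\textbf{There is a genuine gap, and it is in Step 3, where the whole content of the theorem lies.}

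Your Step 2 only shows that a typical root tuple has $O_A(1)$ extensions, since $md^{rt+r}\approx A^{rt+r}m^{rt}$. Nothing there prevents all extensions of a root tuple from passing through a single vertex. Your claim that such concentrated copies are negligible ``because $r\ge t+2$ gives balancedness-type inequalities'' is not an argument. $(T_{r,t},R)$ is already balanced for $r\ge t$, and balancedness alone is not known to give $O(n^{2-1/\rho})$; that is the open Bukh--Conlon conjecture.

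The paper, following Conlon--Janzer, rules out concentration by two specific mechanisms.

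(a) Concentration at the centre. Suppose that for fixed roots $Z$ the centre is mapped to one vertex in more than $C(H,s)^r$ extensions. Then some $\{x,z_{i,1},\dots,z_{i,t}\}$ has at least $C(H,s)$ common neighbours, i.e.\ it is a heavy $(t+1)$-star. Heavy stars number at most $\varepsilon md^{t+1}$ (Proposition~\ref{prop:heavy-stars}). Otherwise, in some $N_L(v)$ the $(t+1)$-graph of rich sets contains a large blowup of $\F_A(H)$ by Erd\H{o}s's theorem. Lemma~\ref{lem:rich-independent} then converts that blowup into a copy of $H$ induced in $G$, using random representatives in the parts with KST for independence, avoidance of the $O(1)$ vertices seeing much of a common neighbourhood, Hall's theorem, and a greedy independent choice of the middle vertices.

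(b) Concentration at a middle vertex. If instead a middle vertex $y_i$ (say $i=r$) is determined by $Z$, the paper fixes the first $(r-1)t$ roots $U$. It forms the bipartite graph $M$ between centre images and $y_r$-images, with $d_M(y)\gtrsim |X|d^{t+1}/m^t$ and $e(M)\gtrsim |X|d$. It then applies the asymmetric one-side-$(t+1)$-bounded embedding, Proposition~\ref{prop:asymmetric}. This is exactly where $r\ge t+2$ is used, via $(t^2+t+1)\frac{rt-1}{rt+r}\ge t^2$, i.e.\ $rt\ge t^2+t+1$.

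Your proposal has neither ingredient, and in particular no induced analogue of the F\"uredi/dependent-random-choice embedding.

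\textbf{Your handling of chords inside a single copy also fails as stated.}

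The chords are edges of $G$ not in $L$, for instance inside $X$ or inside $Y$. Their number and degrees are not governed by $\delta$; $K_{s,s}$-freeness only bounds them by $O(m^{2-1/s})$. So a chord does not cost a power of $\delta$. Nor does $K_{s,s}$-freeness let you arrange that every vertex has fewer than $s$ neighbours in the relevant sets.

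What it does give is that only $O_s(1)$ vertices have a constant fraction of a large set in their neighbourhood (Lemma~\ref{lem:small-bad-set}). That suffices for a greedy count of copies of $T$ in $L$ that are induced in $G$ (Lemma~\ref{lem:countingtrees}). You add leaves one at a time inside $N_L(u')$, avoiding the $G$-neighbourhoods of the vertices already placed and these small bad sets.

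Chords to the roots are not a separate case, since the roots lie in every copy. Once each copy is induced and you have $\lambda$ copies disjoint outside the roots, your Ramsey idea finishes. It is the paper's Lemma~\ref{lem:semi-induced-to-induced}, with colours indexed by pairs of positions.

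\textbf{A minor slip.} $\rho(T_{r,t})=\frac{rt+r}{r+1}$, because the non-roots are the centre and the $r$ middle vertices. So $2-1/\rho$ is exactly the stated exponent, and there is nothing to reconcile.
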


We also need the following additional result.
\begin{theorem} \label{thm:asminus1}
Let $\ell,s\geq2$ be positive integers.
Let $T_{r,1,1}$ denote the tree obtained by adding a leaf to the central vertex of $T_{r,1}$. Let $R$ be the set of leaves of $T_{r,1,1}$. Let $H=[T_{r,1,1}]_R^\ell$. Then $\ex^*_{\bip}(n,H,s)=O(n^{2-\frac{r+1}{2r+1}})$.
\end{theorem}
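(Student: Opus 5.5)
The plan is to follow the Bukh--Conlon / Conlon--Janzer supersaturation scheme: count rooted copies of $T_{r,1,1}$ in $G[X,Y]$, and add a Ramsey-type "cleaning" step that makes the resulting copy of $H$ induced in $G$. Here $T_{r,1,1}$ has a center $c$, middle vertices $u_1,\dots,u_r$ adjacent to $c$, root leaves $w_i\sim u_i$, and an extra root leaf $w_0\sim c$. The non-root part $\{c,u_1,\dots,u_r\}$ has $r+1$ vertices and is incident to $2r+1$ edges, so $\rho=\frac{2r+1}{r+1}$ and $2-\frac1\rho=2-\frac{r+1}{2r+1}$, which matches the claimed bound.

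\emph{Regularization and counting.} Let $G$ be $K_{s,s}$-free and suppose $e(G[X,Y])\ge Cn^{2-\frac{r+1}{2r+1}}$. By the standard Erd\H{o}s--Simonovits / Jiang--Seiver regularization lemma, I pass to an $m$-vertex subgraph $G'\subseteq G[X,Y]$ that is $K$-almost-regular with degree $d\ge C' m^{r/(2r+1)}$. I take the induced subgraph of $G$ on $V(G')$ as ambient graph; it is still $K_{s,s}$-free, and inducedness is checked there. The number of embeddings $(c,w_0,u_1,w_1,\dots,u_r,w_r)$ of $T_{r,1,1}$ in $G'$ is then $\Theta(m d^{2r+1})$: choose $c$, then $w_0$, then $r$ paths $c\,u_i\,w_i$. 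Degenerate embeddings, where some vertices coincide, number $O(m d^{2r})$, which is negligible. The number of root tuples $(w_0,\dots,w_r)$ is at most $m^{r+1}$, so on average a root tuple carries at least $d^{2r+1}/m^{r}\ge (C')^{2r+1}$ embeddings. I choose $C'$ large in terms of $\ell,r,s$.

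As in Conlon--Janzer, I then discard ``bad'' root tuples whose copies pile up on a few non-root vertices. These are tuples where some vertex $u$ or $c$ lies in a positive proportion of the copies. Their contribution is bounded using almost-regularity together with the codegree bound coming from $K_{s,s}$-freeness: a pair, or an $s$-set, has fewer than $s$ common neighbours in a structured sense. The outcome is a root tuple with $L=L(\ell,r,s)$ copies whose non-root parts $B_1,\dots,B_L$ are pairwise disjoint.

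\emph{Inducedness.} This is the main obstacle, with two sources of extra edges.

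(i) Edges between different blocks, or between a block and the wrong roots. I handle these by Ramsey. Colour each pair $i<j$ of blocks by the set of position pairs $(p,q)$ for which the $p$-th vertex of $B_i$ is adjacent to the $q$-th vertex of $B_j$. If $L$ is huge, Ramsey's theorem gives a monochromatic set of size $2s+\ell$. If that colour is nonempty, fix one $(p,q)$ in it. Taking the $p$-th vertices of the first $s$ blocks and the $q$-th vertices of the last $s$ blocks yields a $K_{s,s}$, which is a contradiction. Hence the colour is empty and these $\ell$ blocks have no edges between them.

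(ii) Edges inside a single copy, or between a block and roots other than its designated neighbours. Examples are $u_iu_j$, $cw_i$, $u_iw_j$ with $i\ne j$, edges among the roots $w_i$, and edges within $X$ or within $Y$. Roots are shared by all the copies. So if the root tuple itself spans an edge, or if a fixed root $w_j$ is adjacent to the $u_i$ of many blocks, then this forces either a $K_{s,s}$ or a denser rooted structure. First I pigeonhole over the finitely many ``internal patterns'' so that all $L$ copies share the same extra edges. A shared extra edge from a root $w$ to position $p$ in all blocks makes $w$ adjacent to $L$ vertices. Combined with (i)-type Ramsey arguments on the positions this either gives $K_{s,s}$ or shows that such configurations are rare. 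The rarity is proved by recounting embeddings of $T_{r,1,1}$ plus the extra edge: such a graph has strictly larger density than $\rho$ on the non-root part, so its count is $o(m d^{2r+1})$. Edges among roots are removed similarly, by restricting the count to root tuples that are independent in $G$. I expect this recount, which shows that patterns with extra edges carry only a negligible share of the embeddings in $K_{s,s}$-free almost-regular graphs, to be the most delicate step, and it is where the hypothesis $\ell,s\ge2$ and the large-$C'$ choice enter.

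After this cleaning, the $\ell$ surviving copies together with the common root tuple form an induced copy of $H=[T_{r,1,1}]^\ell_R$ in $G$, lying in $G[X,Y]$. This proves $\ex^*_{\bip}(n,H,s)=O(n^{2-\frac{r+1}{2r+1}})$.
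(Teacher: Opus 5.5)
\textbf{There is a genuine gap in your ``piling up'' step, which is where the real difficulty of this theorem lies.} You claim that root vectors whose copies concentrate on a few non-root vertices carry a negligible share of the embeddings, and you justify this by almost-regularity plus a codegree bound from $K_{s,s}$-freeness for ``a pair, or an $s$-set''.

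But $K_{s,s}$-freeness bounds common neighbourhoods only of $s$-sets. Pairs can have unbounded codegree. For example, Brown's $K_{3,3}$-free graph is regular of degree $\Theta(n^{2/3})$, and any two vertices at distance two have $\Theta(n^{1/3})$ common neighbours. So for $s\ge 3$, $K_{s,s}$-freeness and almost-regularity allow essentially every path of length two to have endpoints of large codegree.

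For a fixed root vector $(w_0,\dots,w_r)$:
\begin{itemize}
\item the number of copies through a fixed centre $c$ is governed by $\prod_i|N^*(c,w_i)|$;
\item the number through a fixed $u_i$ is governed by $|N^*(u_i,w_0)|$, which bounds the choices of $c$, together with the $|N^*(c,w_j)|$.
\end{itemize}
To pass from ``many copies on one root vector'' to ``$L(\ell,r,s)$ copies disjoint outside the roots'', you need these codegrees to be bounded by a constant on almost all copies. Nothing in your argument supplies this, and you never use the absence of an induced $H$ at this stage, which is unavoidable.

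A lesser issue concerns (ii). Recounting ``by strictly larger density'' is not a valid principle. The extra edges inside $X$ or $Y$ are not edges of your almost-regular graph $G'$, so its degree bounds do not control them. Also, a root adjacent to one position in many blocks is only a star, not a $K_{s,s}$, so Ramsey does not help there. The clean fix is to count, from the start, only copies of $T$ that are induced in $G$, via the bad-set argument of Lemma~\ref{lem:countingtrees} built on Lemma~\ref{lem:small-bad-set}.

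\textbf{The missing ingredient is the paper's Lemma~\ref{lem:fewheavypaths}.} Writing $L$ for the almost-regular graph (your $G'$), it says that the number of induced paths $xyz$ in $L$ with $|N^*_L(x,z)|\ge C$ is at most $\varepsilon md^2$. Its proof uses the forbidden induced $H$, as follows.
\begin{itemize}
\item If heavy paths are abundant, find a middle vertex $v$ of many of them, and the graph $\mathcal{D}'$ of heavy pairs inside $N_L(v)$, with minimum degree $\Omega(d)$.
\item Find a vertex $w$ together with $\Omega(C)$ vertices $z$ such that $xwz$ is heavy.
\item Find an $r$-tuple $(x_1,\dots,x_r)$ of $\mathcal{D}'$-neighbours common to $\lambda$ of these $z$.
\item Each such $z$ becomes a centre, $w$ becomes $w_0$, and the $x_j$ become $w_1,\dots,w_r$. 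The $u_j$ are chosen greedily in the large sets $N^*_L(z,x_j)$, avoiding the bad sets $B(z,x_j)$, $N_G(w)$ and earlier choices.
\end{itemize}
This yields $\lambda$ copies of $T$, each induced in $G$, with common roots and otherwise disjoint. Lemma~\ref{lem:semi-induced-to-induced}, which is essentially your Ramsey step (i) and is fine, then gives an induced $H$.

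After discarding copies that contain a heavy path, $\Omega(md^{2r+1})$ copies remain. Now each non-root vertex lies in at most $C^r$ copies with a given root vector:
\begin{itemize}
\item as the centre $c$, because $|N^*_L(c,w_i)|<C$;
\item as some $u_i$, because $|N^*_L(u_i,w_0)|<C$ bounds the centre.
\end{itemize}
Hence a root vector with more than $\lambda(r+1)^2C^r$ copies contains $\lambda$ copies disjoint outside the roots. This needs your density constant $C'$ to be chosen after the heaviness threshold $C$.
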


Note that Theorem~\ref{thm:height-two} and Theorem~\ref{thm:asminus1} prove Conjecture~\ref{conj:HMST} for $F^\ell_{r, t}$, for $r \geq t+2$, and $T_{r, 1, 1}^{\ell}$ when $\ell$ is sufficiently large. 

The following two results follow from the proof of Theorem 1.2 of \cite{HMST}
and the proof of Theorem 1.5 of \cite{DGLL}, respectively.

\begin{theorem} [\cite{HMST}] \label{thm:one-side-bounded}
Let $H$ be a bipartite graph in which each vertex in one part has degree at most $k$, then for any positive integer $s$, $\ex^*_{\bip}(n,H,s)=O(n^{2-1/k})$.
\end{theorem}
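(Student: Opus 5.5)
The plan is to follow the proof of Theorem 1.2 of \cite{HMST}, with $G[X,Y]$ in place of the host graph. Let $H$ have parts $A$ and $B$, where every vertex of $B$ has degree at most $k$. Write $a=|A|$ and $b=|B|$. Let $G$ be a $K_{s,s}$-free $n$-vertex graph and $X\cup Y$ a partition of $V(G)$ with $e(G[X,Y])\ge Cn^{2-1/k}$. After standard regularisation we may assume $G[X,Y]$ has minimum degree comparable to its average degree $d\ge Cn^{1-1/k}$. We aim to embed $A$ into $X$ and $B$ into $Y$ so that the copy of $H$ is induced in $G$. This requires three things: the images of $A$ form an independent set in $G[X]$, the images of $B$ form an independent set in $G[Y]$, and no non-edges of $H$ between $A$ and $B$ are present.

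First we choose the image of $A$ by dependent random choice. Pick a random $k$-tuple (or a random vertex sequence) in $Y$ and take its common neighbourhood $U\subseteq X$. The usual computation gives that $U$ is large and that almost every $k$-subset of $U$ has many, at least $m=m(H,s)$, common neighbours in $Y$.

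Next we make the images of $A$ independent in $G$. Since $G$ is $K_{s,s}$-free, $G[U]$ contains no $K_{s,s}$. Ramsey's theorem, or the Kővári–Sós–Turán bound together with the fact that $U$ is large, gives a large independent set. To keep the rich $k$-sets property, I would run the argument within $U$ by choosing $a$ vertices uniformly at random and conditioning. Alternatively, since $G[U]$ is $K_{s,s}$-free, we can apply Ramsey inside a large subset of $U$ in which all $k$-subsets are rich: first pass to such a subset via a hypergraph Ramsey/counting step, then take an independent set of size $a$ in it.

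Then we embed $B$ vertex by vertex. For $v\in B$ with neighbourhood $N_H(v)$ of size at most $k$, its image must lie in the common neighbourhood $W_v\subseteq Y$ of the images of $N_H(v)$, where $|W_v|\ge m$. The image must avoid three kinds of vertices. It must avoid the neighbours of $A$-images outside $N_H(v)$. It must avoid the $G$-neighbours of previously chosen $B$-images. And it must avoid the previously chosen vertices themselves.

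The key tool is $K_{s,s}$-freeness. Any set of $s$ vertices has fewer than $s$ common neighbours, so a large set $W$ contains few vertices adjacent to a fixed vertex only if we exploit structure. The direct bound fails, since a single vertex can be adjacent to all of $W_v$. I would therefore build the whole configuration greedily with $m$ huge. Apply Ramsey to $W_v$ with respect to $G$ to get a large independent set. Then, for each bad vertex $x$ (an $A$-image not in $N_H(v)$), if $x$ is adjacent to many vertices of $W_v$, we exclude it by modifying the choice of the $A$-images.

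This is done by choosing more candidate vertices than needed: take a large independent set $A'\subseteq U$ of size $L\gg a$. Among $L$ candidates, by $K_{s,s}$-freeness, for any $s$ vertices of $W$ fewer than $s$ candidates are complete to them. Applying Ramsey/pigeonhole (a Kővári–Sós–Turán-type count) then yields a sub-choice in which the required non-adjacencies hold. Concretely, the bipartite graph between $A'$ and a rich common-neighbourhood set is $K_{s,s}$-free, so it has density $o(1)$ when both sides are large, and we pick vertices avoiding it.

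The main obstacle is this last step: controlling the non-edges between $A$ and $B$ and within $B$ simultaneously while preserving the richness of the common neighbourhoods. I would handle it with an iterated Ramsey argument in which every parameter is a large function of $H$ and $s$, taking $C$ large enough that the dependent random choice supplies the required sizes.
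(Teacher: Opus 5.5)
\textbf{Gap: the step you flag as the main obstacle is the heart of the proof, and your proposal does not resolve it.} Your skeleton is the right one: clean up degrees, take $U=N(y)$ by dependent random choice so that almost all $k$-subsets of $U$ have many common neighbours in $Y$, make the $A$-images independent, then embed $B$. The paper quotes this statement from \cite{HMST}, but its own Proposition~\ref{prop:asymmetric} with $p=k$, resting on Lemma~\ref{lem:rich-independent}, yields it directly along these lines. What you offer for the obstacle (Ramsey, ``modifying the choice of the $A$-images'', ``iterated Ramsey'') does not work as stated.

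The missing idea is Lemma~\ref{lem:small-bad-set}. In a $K_{s,s}$-free graph, for any $W$ with $|W|\ge s(4h)^s$, fewer than $4sh$ vertices of $G$ have at least $|W|/(2h)$ neighbours in $W$. This answers your remark that a single vertex can be adjacent to all of $W_v$: some can, but only boundedly many, whatever $W$ is. Your KST-density version (between a pool $A'$ and $W_v$) is circular as used. The set $W_v$ is determined by which vertices you pick for $N_H(v)$. ``Ramsey/pigeonhole yields a sub-choice'' does not explain how all constraints, one for each $v\in B$ and $u\in A\setminus N_H(v)$, are met simultaneously.

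What resolves this is randomness: choose the images so that $\phi(u)$ is uniform over a large pool, independently of $\phi(N_H(v))$. The paper takes one uniform vertex from each part $S_u$ of an $m$-blowup of $\mathcal{F}_A(H)$ found among the rich $k$-sets, via the Tur\'an density of blowups. In your setting a uniformly random injection $A\to U$ also works, since $|U|$ is unbounded. Then for $u\notin e$ we get $\mathbb{P}\bigl(\phi(u)\in B(N^*(\phi(e)))\bigr)=O(sh/|U|)$. A union bound over the at most $h^2$ pairs $(e,u)$ gives a good $\phi$, combined with KST for independence of $\phi(A)$ and with the small proportion of non-rich $k$-sets. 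Each $W_v\setminus\bigcup_{u\notin N_H(v)}N_G(\phi(u))$ then retains at least half of $W_v$.

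On the $B$ side, applying Ramsey inside each $W_v$ does not help. The edges to be avoided run between \emph{different} candidate sets: between an already chosen $\phi(b_i)$ and the candidates for a later $b_j$, and one vertex could be adjacent to all of them. The paper embeds $b_1,\dots,b_q$ in order. It chooses $\phi(b_i)$ in the current candidate set $U'_i$ but outside $\bigcup_{j>i}B(U'_j)$. By Lemma~\ref{lem:small-bad-set} this excludes fewer than $4sh^2$ vertices. Each later $U'_j$ shrinks by a factor of at most $1-\frac{1}{2h}$ per step, so it stays of size at least $s(4h)^s$ throughout. Distinctness of the images comes from Hall's theorem, or simply from avoiding used vertices. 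Alternatively, once all candidate sets are large, you can pick $\phi(b_i)\in U'_i$ independently and uniformly and apply KST to each pair $U'_i,U'_j$.

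Without one of these mechanisms the ``iterated Ramsey argument'' is not a proof. Also, your alternative of passing to an all-rich subset of $U$ by hypergraph Ramsey can fail, since Ramsey may return a set all of whose $k$-subsets are non-rich. Use the random $a$-subset instead.
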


\begin{theorem} [\cite{DGLL}] \label{thm:theta}
Let $\ell,s,t\geq 2$ be positive integers. Let $\Theta_\ell^t$ be the union
of $t$ internally disjoint paths of length $\ell$ with the same endpoints. Then $\ex^*_{\bip}(n,\Theta_\ell^t, s)=O(n^{1+1/\ell})$.
\end{theorem}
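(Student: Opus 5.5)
The plan is to follow the classical Faudree--Simonovits/Bondy--Simonovits argument for theta graphs, run inside the bipartite graph $B=G[X,Y]$, and to upgrade the theta graph it produces to an induced one using only that $G$ is $K_{s,s}$-free. Let $G$ be an $n$-vertex $K_{s,s}$-free graph with a partition $X\cup Y$, and suppose $e(B)\ge Cn^{1+1/\ell}$ with $C=C(\ell,t,s)$ large.

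\textbf{Step 1 (regularisation).} Apply the Erd\H{o}s--Simonovits (or Jiang--Seiver) regularisation lemma to pass to a subgraph $B'\subseteq B$ on $m$ vertices. We want $B'$ to be $K$-almost-regular with minimum degree $\delta\ge C' m^{1/\ell}$. The graph $G[V(B')]$ is still $K_{s,s}$-free, and an induced copy of $\Theta^t_\ell$ in $G[V(B')]$ that lies in $B'$ is also induced in $G$. So we may assume $B$ itself is almost regular with $\delta\ge C'n^{1/\ell}$.

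\textbf{Step 2 (many paths between two vertices).} Fix a root $v$ and grow breadth-first layers $L_0=\{v\},L_1,\dots,L_\ell$ in $B$. Faudree--Simonovits show that if no $\Theta^{T}_\ell$ exists, the layers expand by a factor $\gg\delta/T$. Here $T=T(t,s,\ell)$ is a large number of paths, much larger than $t$. This expansion would give $|L_\ell|>n$, a contradiction. Hence $B$ contains vertices $u,w$ joined by $T$ internally disjoint paths of length exactly $\ell$. I will keep the extra structure that comes out of the layered argument: the internal vertices of each path lie in consecutive BFS layers.

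\textbf{Step 3 (Ramsey cleaning for edges between paths).} Encode each path $P_i$ as a sequence $(p_i^1,\dots,p_i^{\ell-1})$ of internal vertices. Colour each pair $\{i,j\}$ by which pairs of positions $(a,b)$ have $p_i^ap_j^b\in E(G)$; the orientation is handled by ordering the indices. Since $T$ is large, Ramsey's theorem gives a set $I$ of $2s$ indices whose pairs all receive the same colour. Suppose some position pair $(a,b)$ is present in this colour. Split $I$ into its first $s$ and last $s$ indices. Then $\{p_i^a\}_{i\le s}$ and $\{p_j^b\}_{j>s}$ span a $K_{s,s}$ in $G$, and the vertices are distinct because the paths are internally disjoint. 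This contradicts $K_{s,s}$-freeness, so the colour is empty. Thus once $T\ge R_{c}(2s)$, with $c$ the number of colours, we obtain $\max(t,2s)$ paths with no $G$-edges between internal vertices of different paths.

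\textbf{Step 4 (chords within a single path).} The remaining issue, and the main obstacle, is chords of $G$ inside one path. This includes edges from $u$ or $w$ to non-adjacent internal vertices, and edges within $X$ or within $Y$ between vertices of the same path; the latter are allowed in $G$ even though $B$ is bipartite. Ramsey cannot rule these out, since a single chord type is compatible with $K_{s,s}$-freeness.

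My first attempt is a counting argument. A path of length $\ell$ with a chord, say $p^ap^b$ with $b\ge a+2$, is determined up to a bounded factor by a shorter walk together with the chord. By $K_{s,s}$-freeness and Kővári--Sós--Turán, $G[X]$ and $G[Y]$ contain at most $O(n^{2-1/s})$ edges. Combined with the almost-regularity of $B$, this should show that chorded paths from $v$ form a negligible fraction, roughly a $\delta^{-1}$ factor, of all $\ell$-paths from $v$. We would then rerun Step 2 using only chord-free (induced) paths of length $\ell$. The BFS expansion argument needs only a positive proportion of paths, so it still yields $T$ internally disjoint induced $u$--$w$ paths. If the counting does not directly give the $\delta^{-1}$ saving, I would fall back on the alternative used by DGLL. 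That alternative first passes to a subgraph where each vertex has few $G$-neighbours on its own side relative to its $B$-degree, which the KST bound permits. It then discards the layers' vertices with many same-side neighbours before running the expansion.

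After Steps 3 and 4, any $t$ of the surviving paths together with $u,w$ form a copy of $\Theta^t_\ell$ in $B$ that is induced in $G$. Hence $\ex^*_{\bip}(n,\Theta^t_\ell,s)=O(n^{1+1/\ell})$.
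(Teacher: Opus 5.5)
The paper does not reprove this statement; it imports it from the proof of Theorem~1.5 of \cite{DGLL}, so your argument has to stand on its own. Steps 1 and 3 are fine; Step 3 is essentially Lemma~\ref{lem:semi-induced-to-induced} with $R=\{u,w\}$. Step 4, which you rightly call the main obstacle, is the entire content of the induced result, and neither of your suggestions resolves it.

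First, the K\H{o}v\'ari--S\'os--Tur\'an counting cannot work. It only bounds $e(G[X])$ and $e(G[Y])$ by $O(n^{2-1/s})$, which exceeds $e(B)=\Theta(n^{1+1/\ell})$ by a polynomial factor whenever $(s,\ell)\neq(2,2)$. So it does not prevent a vertex from having far more $G$-neighbours on its own side than $B$-neighbours, and a chord $p^ap^b$ is then not paid for by a factor of $\delta$. For the same reason it does not let you pass to a subgraph where same-side $G$-degrees are small relative to $B$-degrees. The right tool is Lemma~\ref{lem:small-bad-set}. Running the greedy argument of Lemma~\ref{lem:countingtrees} along a path shows that a positive proportion (indeed all but an $\varepsilon$-fraction) of the $\ell$-paths of $B$ are induced in $G$.

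Second, and more seriously, even granting this, ``rerun Step 2 using only induced paths'' is not a valid step. Faudree--Simonovits is a theorem about a graph, proved via BFS layers and the edges between consecutive layers. The family of induced $\ell$-paths is not the set of $\ell$-paths of any subgraph of $B$, because being induced is a property of the whole path relative to $G$. So this family cannot be fed into that argument. The $\Theta^T_\ell$ that Faudree--Simonovits produces in $B$ may consist entirely of chorded paths, which, as you note, no Ramsey argument can rule out.

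What is missing is a theta-graph argument that works for a dense \emph{family} of paths rather than for a graph. For instance, you could find a pair $u,w$ joined by many induced $\ell$-paths. You would then need to show, by a cleaning/robustness argument in the spirit of Lemma~\ref{lem:robust-cleaning}, that these paths cannot all pass through a bounded set of vertices, so that $T$ of them are internally disjoint. Without such an argument the proof is incomplete.
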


To prove Theorem \ref{thm:main}, we will apply the following reduction theorem to 
Theorem \ref{thm:height-two}, Theorem \ref{thm:asminus1}, Theorem \ref{thm:one-side-bounded} and Theorem \ref{thm:theta}.
This reduction lemma  may be viewed as an induced version of a well-known reduction theorem of Erd\H{o}s and Simonovits \cite{ES-regular}.

\begin{definition}
Given a bipartite graph $F$ with parts $A, B$, we let $F(t)$ denote the graph formed by taking the disjoint union of a copy of $K_{t, t}$ with parts $C, D$ and $F$ and 
adding all the edges between $A$ and $D$ and all the edges between $B$ and $C$.
\end{definition}

\begin{theorem} [Reduction theorem] \label{thm:reduction}
Let $s$ be a positive integer and $0<\alpha<1$ be a real. 
Let $F$ be a graph such that $ \ex^*_\bip(n, F, s) = O_s(n^{2 - \al })$. Then $\ex^*_\bip(n, F(1), s) = O_s(n^{ 2- \frac{\alpha}{\alpha+1}})$.
\end{theorem}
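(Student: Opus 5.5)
We follow the Erd\H{o}s--Simonovits reduction argument and add induced-structure bookkeeping. Let $G$ be a $K_{s,s}$-free $n$-vertex graph with a partition $X,Y$, and suppose $e(G[X,Y]) \geq C n^{2-\frac{\alpha}{\alpha+1}}$ with $C$ large. By the standard regularisation (Erd\H{o}s--Simonovits, or Jiang--Seiver style) we pass to a subgraph whose average degree $d$ is at least $c\,m^{1-\frac{\alpha}{\alpha+1}} = c\,m^{\frac{1}{\alpha+1}}$ on $m$ vertices and whose degrees are comparable. The induced property is measured in $G$ restricted to the vertex set, so it is preserved under taking vertex subsets but not edge subsets. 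For this reason all reductions must be by vertex deletion, or we must keep track of which pairs are non-edges in $G$. Write $F(1)$ as $F$ together with an edge $cd$, where $c$ is joined to all of $B$ and $d$ to all of $A$.

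\textbf{Step 1: find a good edge.} We look for an edge $cd$ of $G[X,Y]$, with $c \in X$ and $d \in Y$, whose neighbourhoods are large. Set $N_1 = N_{G[X,Y]}(d) \subseteq X$ and $N_2 = N_{G[X,Y]}(c) \subseteq Y$, and let $G'$ be the graph $G[N_1 \cup N_2]$. Use the partition $(N_1, N_2)$ with $A \subseteq N_1$ and $B \subseteq N_2$; this orientation matches the definition, since $d$ is adjacent to $A$ and $c$ to $B$. Count edges between $N_1$ and $N_2$: summing over edges $cd$, the number of paths of length three is at least roughly $m d^3$. Hence some edge $cd$ has $e(G[N_1,N_2]) \geq d^3/d = d^2$ up to constants, while $|N_1|+|N_2| \lesssim d$. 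Comparable degrees give $|N_i| \approx d$; without them we need a dyadic pigeonhole over the degrees of $c$ and $d$.

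\textbf{Step 2: apply the hypothesis.} $G'$ is $K_{s,s}$-free and has $n' \approx d$ vertices. Applying the hypothesis $\ex^*_\bip(n',F,s) = O(n'^{2-\alpha})$, a copy of $F$ induced in $G'$ exists once $d^2/\text{const} \gg d^{2-\alpha}$. In that case the count is far too weak, because $d^{2-\alpha} \ll d^2$ always holds and the argument would prove too much. The correct balance must therefore come from dense neighbourhoods being forbidden, which suggests the count should be set up differently. Sum $e(G[N(d),N(c)])$ over edges and compare with $\sum_{cd} |N(c)|^{2-\alpha}$. With $d \approx m^{1/(\alpha+1)}$, the total number of $P_4$'s is about $m d^3$, while the threshold total is $m d \cdot d^{2-\alpha}$. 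Then $d^3 > d^{3-\alpha}$ trivially, so in fact only the density within the neighbourhood matters. I expect the real constraint to come from $K_{s,s}$-freeness forcing $e(G[N_1,N_2]) \leq$ something like $d^{2-1/s}$, while we merely need it to exceed $d^{2-\alpha}$. Paths of length three give exactly $d^2$ on average, so the plan works provided the regularisation is done honestly, with $m$ and $d$ tied so that $d^3 m \gtrsim$ the number of $P_4$'s.

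\textbf{Step 3: check that the copy is induced in $G$.} Take the copy of $F$ on $A \cup B$ found in Step 2, which is induced in $G$. Add $c$ and $d$; the edges $cd$, $cB$ and $dA$ are present by construction. What remains to rule out are edges from $c$ to $A$, from $d$ to $B$, and $c$ or $d$ coinciding with vertices of the copy. Since $c \in X$ and $A \subseteq X$ are on the same side, we need $G$ to have no edges between $c$ and $A$, and likewise for $d$ and $B$. These are the obstacle. I would handle them by restricting $N_1$ to $N(d) \setminus N_G(c)$ and $N_2$ to $N(c) \setminus N_G(d)$. This loses little unless $c$ and $d$ have many common neighbours within $X$ or $Y$ in $G$. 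In that case we pigeonhole: either many edges $cd$ are fine, or $G[X]$ and $G[Y]$ are dense around many vertices, and we then rerun the argument or derive a $K_{s,s}$ via dependent random choice on the dense same-side part. This cleaning step is where I expect the main difficulty.
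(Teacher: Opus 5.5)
There is a genuine gap, and it sits in the one computation that produces the exponent. Below, call the chosen edge $xy$ and let $d$ be the minimum degree, to avoid your double use of $d$.

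In Step 1 you conflate two different counts. Edges of $G[N(y),N(x)]$ correspond to $4$-cycles through $xy$, not to paths of length three. The number of paths of length three with middle edge $xy$ is trivially about $|N(x)||N(y)|\approx d^2$, but this says nothing about edges between the two neighbourhoods. So your claim that some edge has $e(G[N_1,N_2])\gtrsim d^2$ is unfounded. It is in fact impossible: $G[N_1,N_2]$ is $K_{s,s}$-free on $O(d)$ vertices, so Lemma~\ref{lem:KST} bounds it by $O(d^{2-1/s})$. That is exactly why your Step 2 ``proves too much''.

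The correct count goes as follows. There are $\Omega(md^2)$ cherries, so convexity over the $\binom{m}{2}$ leaf pairs $T$ gives $\sum_T\binom{|N^*_L(T)|}{2}=\Omega(d^4)$ four-cycles. This uses $d^2\gg m$, which holds because $d\gtrsim m^{1/(\alpha+1)}$ and $\alpha<1$. Since there are $O(md)$ edges, some edge lies in $\Omega(d^3/m)$ of these cycles. Comparing $d^3/m$ with the hypothesis bound $O(d^{2-\alpha})$, applied on a host with $O(Kd)$ vertices, gives $d^{1+\alpha}=O(m)$. This contradicts $d\ge \frac{C}{2K}m^{1/(\alpha+1)}$ once $C$ is large, and it is where $\frac{\alpha}{\alpha+1}$ comes from. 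Your proposal never reaches this comparison.

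In Step 3 you correctly see that one should work in $L[N_L(x)\setminus N_G(y),\,N_L(y)\setminus N_G(x)]$, with $x,y$ themselves removed at a cost of $O(d)$ edges. However, you give no reason why many edges survive this restriction. The proposed case split (dense $G[X]$, rerunning the argument, dependent random choice) is not an argument.

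The paper avoids this by counting \emph{induced} $4$-cycles from the start:
\begin{enumerate}
\item Pass to an induced $K$-almost-regular subgraph $L$ via Lemma~\ref{lem:almost-reg}.
\item Take only cherries that are induced in $G$; Lemma~\ref{lem:countingtrees} still gives $\Omega(md^2)$ of them, and the two leaves of each are non-adjacent in $G$.
\item Since $G$ is $K_{s,s}$-free, every $2s$-subset of $N^*_L(T)$ contains a non-adjacent pair. Hence $N^*_L(T)$ spans at least $\binom{|N^*_L(T)|}{2}/\binom{2s}{2}$ non-adjacent pairs.
\end{enumerate}
This yields $\Omega_s(d^4)$ induced $4$-cycles. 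An edge $xy$ lying in $\Omega_s(d^3/m)$ of them then directly gives $\Omega_s(d^3/m)$ edges between $N_L(x)\setminus N_G(y)$ and $N_L(y)\setminus N_G(x)$. The non-adjacencies needed for the extension to $F(1)$ to be induced are thus built into the count, and no further cleaning is required.
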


Now, we show how Theorem \ref{thm:main} follows from the results mentioned in this section. This recursive argument is as in those by Kang, Kim, and Liu
\cite{KKL} and by Conlon and Janzer \cite{CJ} for $\ex(n,H)$.

\begin{proof} [Proof of Theorem \ref{thm:main} using Lemma \ref{lem:BC-lower},
and Theorems \ref{thm:height-two}, \ref{thm:asminus1}, \ref{thm:one-side-bounded}, \ref{thm:theta}, \ref{thm:reduction}]
We call a rational $r$ {\it realisable} if there exist a graph $H$
and a constant $s_0$ such that $\ex^*(n,H,s)=\Theta(n^r)$ for all $s\geq s_0$.
We call $r=2-\frac{a}{b}$, where $a,b$ are positive integers satisfying $b>a$, {\it good}
if there is a balanced rooted bipartite graph $(F,R)$ satisfying that
$\rho(F)=\frac{b}{a}$ and that $\ex^*_{\bip}(n,F^\ell_R,s)=O(n^{2-\frac{a}{b}})$ for all $\ell,s$. 
First, we claim that if $r$ is good then $r$ is realisable. Suppose $r$ is good with a corresponding $(F,R)$.
Then by Lemma \ref{lem:BC-lower}, there exists $\ell_0$ such that for all
$\ell\geq \ell_0$, $\ex(n,F_R^\ell)=\Omega(n^{2-\frac{a}{b}})$.
Let $s_0=|V(F_R^\ell)|$. Then clearly $\ex^*(n,H,s)\geq \ex(n,F_R^\ell)\geq\Omega(n^{2-\frac{a}{b}})$ for all $s\geq s_0$.
On the other hand, $\ex^*(n,F_R^\ell,s)\leq 2\ex^*_{\bip}(n,F_R^\ell,s)\leq 
O(n^{2-\frac{a}{b}})$. Hence $\ex^*(n,F_R^\ell,s)=\Theta(n^r)$ for all $\ell\geq \ell_0, s\geq s_0$. Thus, $r$ is realisable.
Hence, it suffices to prove that for all $r=2-\frac{a}{b}$, where $a,b$ are positive integers satisfying $b\geq \max\{a, (a-1)^2\}$, $r$ is good.

Now, suppose $r=2 - \frac{a}{b}$ is good with a corresponding $(F,R)$. Let $F'=F(1)$ and $R'$ be the union of $R$ and the two vertices added to $F$ to form $F(1)$.
It is easy to check that $(F',R')$ is a balanced rooted bipartite graph with $\rho(F')=\rho(F)+1=\frac{b}{a}+1=\frac{a+b}{a}$. Let $\ell,s$ be any positive integers.
Note that $[F']^\ell_{R'}=F^\ell_R(1)$. Since $r$ is good, $\ex^*_{\bip}(n,F^\ell_R, s)=O(n^{2-\frac{a}{b}})$.
By Theorem \ref{thm:reduction}, $\ex^*_{\bip}(n,F^\ell_R(1), s)=O(n^{2-\frac{a}{a+b}})$. In other words, $\ex^*_{\bip}(n,(F')^\ell_{R'},s)=O(n^{2-\frac{a}{a+b}})$.
Hence, $2-\frac{a}{a+b}$ is also good. By iterating the above argument, we see that whenever $2-\frac{a}{ap+q}$ is good for $p=p_0$, it is good for all integers $p\geq p_0$.

By Theorem \ref{thm:height-two}, $2 - \frac{r + 1}{rt + r} = 2 - \frac{r + 1}{(r + 1)t + r - t}$ is good for all $r\geq t+2\geq 3$. Hence,
$2 - \frac{r + 1}{ (r + 1)p + (r - t)}$ is good for all $p \geq t$. Since $r - t$ ranges from $2$ to $r -1$, for $r \geq 3, d \geq r^2$ with $d \neq -1, 0, 1 \pmod{r + 1}$ we get rationals of the form $2 - \frac{r + 1}{d}$. If $d \equiv 0 \pmod{r +1}$, we see rationals of the form $2 - \frac{1}{t}$, which are given by $K_{t, \ell}$ \cite{HMST}. If $d \equiv 1 \pmod{r + 1}$, we note that the exponent associated with $\Theta_{r + 2}^t $ is $2 - \frac{r + 1}{ r + 2}$, so iteratively applying Theorem~\ref{thm:reduction} to $\Theta_{r + 2}^t$ gives all exponents of the form $2 - \frac{a}{pa + 1}$ for $p \geq 1$. Similarly, we note the exponent of $T_{r, 1, 1}^{\ell}$ is $2 - \frac{r + 1}{2(r + 1) - 1}$, iteratively applying Theorem~\ref{thm:reduction} gives us all exponents of the form $2 - \frac{a}{pa - 1}$ for $p \geq 2$. Since for $r = 1, 2$, every $d$ is either equivalent to $0, 1, -1 \pmod{r + 1}$, the proof is completed. 

\end{proof}

Hence, to complete the proof, it remains to prove Theorem \ref{thm:reduction}, Theorem \ref{thm:height-two}
and Theorem \ref{thm:asminus1}.


\section{Notation and regularization}
Given a graph $G$, we use $\Delta(G), \delta(G)$ to denote the maximum and minimum degree of $G$, respectively.
Let $K\geq 1$ be a real. We say that a graph $G$ is {\it $K$-almost regular} if $\Delta(G)\leq K \delta(G)$.
Given a vertex $v$ in $G$, let $N_G(v)$ denote the {\it neighborhood} of $v$ in $G$. 
Given a set $S$ of vertices, let $N^*_G(S)=\bigcap_{v\in S} N_G(v)$ denote the {\it common neighborhood} of $S$ in $G$.
When the context is clear, we drop the subscript.

In this paper, we frequently use a standard tool to reduce a dense host graph to an almost regular induced subgraph with similar relative density. 
Such a reduction tool was first developed by Erd\H{o}s and Simonovits \cite{ES-regular}
and there have been many variants of it (see for instance \cite{Jiang-Seiver},
\cite{CJL}, \cite{HMST} and etc). For convenience, we will use the most recent version of it, given by Dong, Gao, Li, and Liu \cite{DGLL}.

\begin{lemma} [\cite{DGLL} ] \label{lem:almost-reg} 
Suppose $\alpha, C>0$ and let $K=2^{\frac{4}{\alpha}+2}$. Let $G$ be an $n$-vertex graph on $n$ vertices with $e(G)\geq Cn^{1+\alpha}$. Then there exists a 
$K$-almost-regular induced subgraph $H\subseteq G$ on $m$ vertices such
that $e(H)\geq \frac{C}{4} m^{1+\alpha}$ and $m\geq \frac{C^{\frac{\alpha+1}{2\alpha+4}}}{K} n^{\frac{\alpha}{2\alpha+4}}$. 
\end{lemma}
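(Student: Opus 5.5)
We would prove Lemma~\ref{lem:almost-reg} by the usual Erd\H{o}s--Simonovits density-increment iteration. The quantity we track is the normalized density $\phi(H)=e(H)/|V(H)|^{1+\alpha}$ of an induced subgraph $H$. We build a sequence $G=G_0\supseteq G_1\supseteq\cdots$ of induced subgraphs on $m_0=n> m_1>\cdots$ vertices. At each step, either $G_i$ (after a cleaning step) is $K$-almost regular and we stop, or we pass to a much smaller $G_{i+1}$ whose normalized density has grown by a fixed factor; a concrete choice is $\phi(G_{i+1})\ge 2\phi(G_i)$. Trivially $\phi(H)\le |V(H)|^{1-\alpha}$, so the densities $C_i$ cannot grow forever. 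Also $C_i\ge C$ and $C_i\le m_i^{1-\alpha}$, which forces $m_i\ge C^{1/(1-\alpha)}$. The quantitative lower bound $m\ge C^{\frac{\alpha+1}{2\alpha+4}}K^{-1}n^{\frac{\alpha}{2\alpha+4}}$ should then follow by balancing how fast $m_i$ shrinks against how fast $C_i$ grows. This bookkeeping is routine once the single step is set up with explicit constants.

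\textbf{Cleaning step.} Given $G_i$ on $m$ vertices with $e(G_i)\ge C_i m^{1+\alpha}$, we repeatedly delete vertices of degree less than $\frac{C_i}{2}m^{\alpha}$. The total number of edges deleted is at most $\frac{C_i}{2}m^{1+\alpha}$, so at least half the edges survive. The remaining induced graph $G_i'$ then has minimum degree at least $\frac{C_i}{2}m^{\alpha}$ and $\phi(G_i')\ge C_i/2$. Only this one halving is allowed in the final graph, because the lemma asks for $C/4$ rather than something smaller. Therefore, in the increment step, we make sure the gain (a factor of at least $2$ or $4$) always compensates for the halving incurred at the previous stage.

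\textbf{Increment step.} Suppose $G_i'$ is not $K$-almost regular. Let $X$ be the set of vertices of degree greater than $\frac{K}{2}C_i m^{\alpha}$. Because $\sum_v d(v)\le 2e(G_i')\le m^2$, we have $|X|\le 4m^{1-\alpha}/(KC_i)$, and $|X|$ is at most an $O(1/K)$-fraction of $m$. We then split into two cases.
\begin{itemize}
\item If the edges meeting $X$ are only a small fraction of $e(G_i')$, we delete $X$ and reclean. This yields a graph whose maximum degree is within a factor $K$ of its minimum degree, and we stop.
\item Otherwise, we restrict to $X$ together with a suitable part of $N(X)$. Concretely, we sort $V\setminus X$ by degree into $X$, dyadically by size $2^{-j}m$, and pick a scale $j$ at which the edges from $X$ concentrate. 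The induced subgraph on $X\cup Y_j$, with $Y_j$ of size about $2^{-j}m$, then has $\phi$ larger by the factor $2$.
\end{itemize}
The choice $K=2^{4/\alpha+2}$ is exactly what makes this pigeonhole over $O(1/\alpha)$ dyadic scales produce a gain of $2$ in $\phi$ despite the $(\text{size})^{1+\alpha}$ normalization.

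\textbf{Main obstacle.} The delicate step is the increment step when $|X|$ is very small: a naive random choice of $Y$ gives a density gain only when $K(|X|/m)^{1-\alpha}\gtrsim 2^{\alpha}$. We would handle small $X$ through the dyadic scale choice just described, accepting a polynomial loss in $m$ per step. This loss is the source of the weak exponent $\frac{\alpha}{2\alpha+4}$ in the size bound. Verifying that the number of iterations, at most about $\log_2(n^{1-\alpha}/C)$, combined with the per-step shrinkage still yields $m\ge C^{\frac{\alpha+1}{2\alpha+4}}n^{\frac{\alpha}{2\alpha+4}}/K$ is the computation we expect to require the most care.
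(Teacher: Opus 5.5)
The paper does not prove this lemma; it is imported from \cite{DGLL}. Your outline therefore has to stand on its own, and it has a genuine gap exactly where the content of the lemma lies: the increment step and the accounting for $m$.

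\textbf{The size accounting fails as stated.} You fix the gain at $\phi(G_{i+1})\ge 2\phi(G_i)$, but under ``Main obstacle'' you accept a polynomial loss in $m$ per step, over up to $\log_2(n^{1-\alpha}/C)$ iterations. That combination cannot give $m\ge n^{\Omega(1)}$. Suppose $m_{i+1}\approx m_i^{\kappa}$ with $\kappa<1$ while $\phi$ merely doubles. The only brake on the iteration is $2^iC\le \phi(G_i)\le m_i^{1-\alpha}\le n^{\kappa^i(1-\alpha)}$. This permits about $\log_{1/\kappa}\log n$ steps, so the guarantee you get on the final $m$ is only polylogarithmic in $n$. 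It also conflicts with your other description, a pigeonhole over $O(1/\alpha)$ dyadic scales, which would shrink $m$ only by a constant factor.

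What the argument needs is that every step, including cleaning, pays for its shrinkage:
\[
\frac{\phi(G_{i+1})}{\phi(G_i)}\ge \left(\frac{m_i}{m_{i+1}}\right)^{\gamma}\quad\text{for a fixed }\gamma>0.
\]
Then $\phi(G_i)\,m_i^{\gamma}\ge Cn^{\gamma}$ is preserved, and together with $\phi\le m^{1-\alpha}$ this yields $m\ge (Cn^{\gamma})^{1/(1-\alpha+\gamma)}$. Your sketch never establishes such an inequality. Your claim that $K=2^{4/\alpha+2}$ is ``exactly'' what produces a gain of $2$ is also unchecked. So is whether the specific exponents $\frac{\alpha+1}{2\alpha+4}$ and $\frac{\alpha}{2\alpha+4}$ come out.

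\textbf{The increment step itself.} Small $|X|$ is not the difficulty. Take $\mu$ to be a constant of order $1/K$ with $|X|\le\mu m$, and let $Y$ be the $\mu m$ vertices outside $X$ with the most neighbours in $X$. These vertices carry at least a $\mu$-fraction of $e(X,V\setminus X)$. Hence $G[X\cup Y]$ has at most $2\mu m$ vertices, and its $\phi$ is larger by a factor of order $\theta\mu^{-\alpha}$, where $\theta$ is the fraction of edges meeting $X$. This is a constant-factor shrink, and no dyadic pigeonhole is needed. The real work is choosing the thresholds so that this factor still exceeds $1$ after the cleaning losses.

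You also overlook that cleaning alone can shrink $G_i$ from $m$ vertices to about $\sqrt{C_im^{1+\alpha}}$ vertices. In that case your bound $|X|=O(m/K)$ and your sets of size $2^{-j}m$ refer to the wrong vertex count. This case is actually a large gain, since $\phi(G_i')\ge \frac{C_i}{2}(m/m')^{1+\alpha}$. It must be fed back into the iteration, or, at the final stage, accepted as a one-time square-root loss.

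\textbf{Two smaller slips.}
\begin{enumerate}
\item The bound $|X|\le 4m^{1-\alpha}/(KC_i)$ is wrong. The correct bound is $|X|\le 4e(G_i')/(KC_im^{\alpha})$. This is $O(m/K)$ only if $C_i$ is the actual normalized density of $G_i$, not merely a lower bound for it.
\item In your stopping case, deleting $X$ lowers the remaining degrees, so re-cleaning needs a strictly smaller threshold than the first cleaning. That costs a second constant factor in both the density and the degree ratio. Your ``one halving'' bookkeeping toward density $C/4$ and degree ratio $K$ has to be redone with adjusted thresholds.
\end{enumerate}
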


\section{Preliminary lemmas and Proof of Theorem \ref{thm:reduction}}

In this section, we develop some preliminary lemmas and prove Theorem \ref{thm:reduction}. 
We start with the following standard averaging lemma.
\begin{lemma} [\cite{Jiang-Qiu2} ] \label{lem:JQ-lem}
Let $0<c<1$ be a real and $s$ a positive integer. Let $G$ be a bipartite graph with a
bipartition $(X,Y)$. Suppose that $e(G)\geq c|X||Y|$ and that $c|X|\geq 2s$. Then there exists
an $s$-set in $X$ such that $|N^*_G(S)|\geq (c/2)^s|Y|$.
\end{lemma}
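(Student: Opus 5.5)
This is the standard Jiang–Qiu averaging lemma, and I would prove it by double counting stars $K_{1,s}$ centered in $Y$ with all $s$ leaves in $X$. For $y\in Y$ let $d(y)=|N_G(y)|$. The number of pairs $(S,y)$ with $S$ an $s$-subset of $X$ and $S\subseteq N_G(y)$ is
\[
\sum_{S\in\binom{X}{s}} |N^*_G(S)| \;=\; \sum_{y\in Y}\binom{d(y)}{s}.
\]
I then average the left-hand side over the $\binom{|X|}{s}$ choices of $S$: some $s$-set $S$ has $|N^*_G(S)|$ at least the ratio of the right-hand side to $\binom{|X|}{s}$. So it suffices to show
\[
\sum_{y}\binom{d(y)}{s}\ \ge\ (c/2)^s|Y|\binom{|X|}{s}.
\]

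For the lower bound I use convexity. The function $f(x)=\binom{x}{s}$, taken as $x(x-1)\cdots(x-s+1)/s!$ for $x\ge s-1$ and $0$ otherwise, is convex on the reals. The average degree on the $Y$ side is $\bar d=e(G)/|Y|\ge c|X|$, so Jensen gives $\sum_y\binom{d(y)}{s}\ge |Y|\binom{\bar d}{s}\ge |Y|\binom{c|X|}{s}$, using that $f$ is nondecreasing.

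It remains to check $\binom{c|X|}{s}\ge (c/2)^s\binom{|X|}{s}$, and this is where the hypothesis $c|X|\ge 2s$ enters. Compare the two products factor by factor: for $0\le i\le s-1$,
\[
\frac{c|X|-i}{|X|-i}\ \ge\ \frac{c|X|-i}{|X|}\ \ge\ \frac{c|X|-s}{|X|}\ \ge\ \frac{c|X|/2}{|X|}=\frac c2,
\]
since $s\le c|X|/2$. Multiplying these $s$ inequalities gives the claim, and combining with the previous steps finishes the proof.

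The only delicate point is making the convexity step rigorous for non-integer $\bar d$, which is why $f$ is extended as a convex nondecreasing function on $[0,\infty)$. The hypothesis $c|X|\ge 2s$ guarantees $\bar d\ge 2s$, so we stay in the regime where the polynomial form is valid and positive. Everything else is routine.
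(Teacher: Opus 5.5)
Your proof is correct. The paper gives no proof of this lemma and simply quotes it from Jiang and Qiu, and your argument is the standard one for it: double count pairs $(S,y)$ with $S\subseteq N_G(y)$, apply Jensen to the convex nondecreasing extension of $\binom{x}{s}$ (convex because the polynomial's roots all lie in $[0,s-1]$ and its right derivative at $s-1$ is $1/s\ge 0$), then bound each factor $\frac{c|X|-i}{|X|-i}$ below by $\frac{c}{2}$ using $c|X|\ge 2s$.
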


The next lemma follows immediately from Lemma \ref{lem:JQ-lem}
and will be frequently used in our proofs.

\begin{lemma} \label{lem:small-bad-set}
Let $0<c<1$ be a real. Let $s\geq 2$ be a positive integer.
Let $G$ be $K_{s,s}$-free graph. Let $W$ be a set of vertices with 
$|W|\geq s(\frac{2}{c})^s$.
Let $B(W)$ be the set of vertices in $G$ outside $W$ that have at least $c|W|$ neighbors in $W$.
Then $|B(W)|<2s/c$.
\end{lemma}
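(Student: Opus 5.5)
We argue by contradiction and apply Lemma \ref{lem:JQ-lem} to the bipartite graph between $B(W)$ and $W$. Suppose $|B(W)|\geq 2s/c$, and fix a subset $B'\subseteq B(W)$ with $|B'|=\lceil 2s/c\rceil$. (If $B(W)$ is infinite-free this is just choosing exactly that many vertices; we only need $|B'|\geq 2s/c$.) Let $G'$ be the bipartite graph with parts $X=B'$ and $Y=W$ whose edges are the edges of $G$ between $B'$ and $W$. Because $B(W)$ lies outside $W$, the sets $X$ and $Y$ are disjoint, so $G'$ really is bipartite and is a subgraph of $G$.

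We check the two hypotheses of Lemma \ref{lem:JQ-lem}. Each vertex of $B'$ has at least $c|W|$ neighbours in $W$, so $e(G')\geq |B'|\cdot c|W| = c|X||Y|$. Also $c|X|\geq c\cdot 2s/c = 2s$.

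Lemma \ref{lem:JQ-lem} then gives an $s$-set $S\subseteq B'$ with
\[
|N^*_{G'}(S)|\geq (c/2)^s|W|\geq (c/2)^s\cdot s(2/c)^s = s.
\]
Choose $s$ vertices $T\subseteq N^*_{G'}(S)\subseteq W$. Since $S\subseteq B'$ lies outside $W$, the sets $S$ and $T$ are disjoint, and every vertex of $S$ is adjacent in $G$ to every vertex of $T$. So $S\cup T$ spans a copy of $K_{s,s}$ in $G$, contradicting that $G$ is $K_{s,s}$-free. Hence $|B(W)|<2s/c$.

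Nothing here is a serious obstacle. The points needing care are taking $B'$ of size at least $2s/c$ so that $c|X|\geq 2s$ holds, and noting that $B(W)\cap W=\emptyset$, which makes the graph genuinely bipartite and the resulting $K_{s,s}$ non-degenerate. The hypothesis $|W|\geq s(2/c)^s$ is used only to turn the common-neighbourhood bound into at least $s$ vertices.
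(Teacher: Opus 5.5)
Your proof is correct and follows the paper's argument: assume $|B(W)|\geq 2s/c$, apply Lemma~\ref{lem:JQ-lem} to the bipartite graph between $B(W)$ and $W$ to get an $s$-set with at least $(c/2)^s|W|\geq s$ common neighbours in $W$, and thereby a $K_{s,s}$. Passing to a subset $B'$ of size $\lceil 2s/c\rceil$ is harmless but unnecessary, since the paper applies the lemma to all of $B(W)$ directly.
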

\begin{proof}
Let $B=B(W)$. 
Let $H$ be a bipartite subgraph of $G$ with parts $B$ and $W$ consisting of all
the edges of $G$ between $B$ and $W$. 
Suppose for contradiction that $|B|\geq 2s/c$.
Then $e(H)\geq c|B||W|$ and $c|B|\geq 2s$.
By Lemma \ref{lem:JQ-lem}, there exists an $s$-set $S$ in $B$ such that 
$|N^*_H(S)|\geq (c/2)^s |W|\geq s$, contradicting $G$ being $K_{s,s}$-free.
Hence, we must have  $|B(W)|<2s/c$.\end{proof}

The following theorem was established in \cite{DGLL}.

\begin{theorem}[\cite{DGLL} ] \label{thm:DGLL-theorem}
Let $T$ be a $t$-vertex tree. If $G$ is an $n$-vertex $K$-almost-regular $K_{s,s}$-free graph with average degree
$d\geq (4Kt)^{6s}s^3$, then there are at least $n(\frac{d}{2K})^{t-1}$ labeled induced copies of $T$ in $G$.
\end{theorem}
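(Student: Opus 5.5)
We will count labeled induced copies of $T$ by embedding $T$ greedily, one vertex at a time. Fix an ordering $v_1,\dots,v_t$ of $V(T)$ in which every $v_i$ with $i\ge 2$ has exactly one neighbour $p(v_i)$ among $v_1,\dots,v_{i-1}$, for instance a breadth-first order. Let $\delta=\delta(G)$. Almost-regularity gives $\delta\ge \Delta(G)/K\ge d/K$. We will show the following: $v_1$ may be mapped to any of the $n$ vertices, and at each later step there are at least $\delta/2\ge d/(2K)$ admissible choices for the image of $v_i$. This yields at least $n(\frac{d}{2K})^{t-1}$ labeled induced copies.

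The constraints come from inducedness. When we place $x_i$, the image of $v_i$, as a neighbour of $u=x_{p(v_i)}$, it must be distinct from the earlier images and non-adjacent to every earlier image other than $u$. The danger is an earlier image $w$ that is adjacent to a large fraction of $N(u)$. To rule this out we maintain the following invariant, with $c=\frac{1}{4tK}$:
\[
|N(x_j)\cap N(x_k)|< c\,|N(x_k)| \quad\text{for all distinct embedded } x_j, x_k .
\]
Given the invariant, the vertices of $N(u)$ adjacent to some other earlier image number fewer than $t\cdot c|N(u)|\le |N(u)|/4$.

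To preserve the invariant we use Lemma \ref{lem:small-bad-set}. We apply it with $W=N(x_k)$ for each embedded $x_k$, and with the threshold $c/K$ in place of $c$. We require $x_i\notin B(N(x_k))$ for every $k<i$. This gives $|N(x_i)\cap N(x_k)|<(c/K)|N(x_k)|\le c|N(x_k)|$.

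It also handles the reverse direction. Since $|N(x_k)|\le \Delta\le K\delta\le K|N(x_i)|$, we get $|N(x_i)\cap N(x_k)|<c|N(x_i)|$ as well.

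Each bad set has size less than $2sK/c=8stK^2$. The lemma's hypothesis $|W|\ge s(2K/c)^s=s(8tK^2)^s$ holds because $|N(x_k)|\ge \delta\ge d/K\ge (4Kt)^{6s}s^3/K$.

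The admissible choices for $x_i$ are therefore the vertices of $N(u)$ that satisfy three conditions:
\begin{itemize}
\item they are not among the previous $<t$ images;
\item they are not adjacent to any other previous image, which removes fewer than $|N(u)|/4$ vertices;
\item they do not lie in any of the fewer than $t$ bad sets, which removes fewer than $8st^2K^2$ vertices.
\end{itemize}
Hence the number of choices is at least $\frac34|N(u)|-t-8st^2K^2\ge \frac{\delta}{2}$. The last inequality uses $\delta\ge d/K$ together with the assumed lower bound on $d$, which dwarfs $t+8st^2K^2$.

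The main obstacle is the symmetric invariant. A new vertex must not overlap heavily with earlier neighbourhoods, and an earlier vertex must not overlap heavily with the neighbourhood of a later parent. The first requirement is supplied directly by Lemma \ref{lem:small-bad-set}. The second is converted into the first via almost-regularity, at the cost of replacing $c$ by $c/K$. The remainder of the proof is checking that the constants in the lower bound on $d$ suffice.
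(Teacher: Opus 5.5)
Your argument is correct, and it handles the main difficulty differently from the paper. The paper cites this theorem from \cite{DGLL} and proves instead the more general Lemma~\ref{lem:countingtrees}, in which copies lie in an almost-regular subgraph $L$ but must be induced in an arbitrary $K_{s,s}$-free host $G$. Both proofs build $T$ leaf by leaf and use Lemma~\ref{lem:small-bad-set} to keep the new vertex out of the bad sets of earlier ones. They differ on the ``reverse direction'': an earlier vertex must not lie in the bad set of the new vertex.

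In the paper's setting the relevant quantity $|N_G(y)\cap N_L(x)|$ is not symmetric, and $G$ need not be almost regular. So the condition that no earlier vertex lies in $B(v')$ for the new vertex $v'$ is not controlled locally. The paper instead extends each good copy in at least $\frac58 d$ ways and then discards the bad extensions by a global count of at most $n\cdot 8st^2K\cdot (Kd)^{i-2}$. This is why its degree hypothesis carries a factor $2^{t}K^{t-1}$.

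You exploit the fact that here the host and the counted graph coincide, so $N(x_i)\cap N(x_k)$ is symmetric. Almost-regularity of $G$ then turns the one-sided bad-set condition with threshold $c/K$ into your two-sided invariant. As a result every partial embedding extends in at least $\delta/2$ ways, and the count is a clean product. The degree requirement you need is only of order $\max\{s(8tK^2)^s,\,36st^2K^2\}$ on $\delta$, which the stated hypothesis $d\ge (4Kt)^{6s}s^3$ covers easily. In contrast, Lemma~\ref{lem:countingtrees} applied with $L=G$ does not by itself give the theorem with this constant when $t$ is large relative to $s$. The price of your approach is that the symmetrization does not carry over to the $L\subsetneq G$ setting the paper actually needs later.

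One small point to patch concerns the parent $x_k=u$. As stated, Lemma~\ref{lem:small-bad-set} only bounds bad vertices \emph{outside} $W$, but here $x_i\in N(u)=W$, so you need the version that counts all vertices. The same proof gives it: apply Lemma~\ref{lem:JQ-lem} to the auxiliary bipartite graph between a copy of $B$ and a copy of $W$, and note that the common neighbourhood of an $s$-set never meets the set itself. The paper's proof of Lemma~\ref{lem:countingtrees} uses this extension implicitly as well.
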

 We need a slightly more general version of this result stated as below. This version would
 follow from essentially the same proof as Theorem \ref{thm:DGLL-theorem}. However, for completeness, we give a self-contained short proof of it using Lemma \ref{lem:small-bad-set}.
In our lemma, $d$ denotes the minimum degree.

\begin{lemma}\label{lem:countingtrees}
Let $T$ be a $t$-vertex tree. Let $G$ be an $n$-vertex $K_{s,s}$-free graph.
Suppose $L$ is $K$-almost-regular subgraph of $G$ with
minimum degree at least $d\geq st^22^{t + 6} K^{t - 1}$. Then $L$ contains
at least $n(\frac{d}{2})^{t-1}$ copies of $T$ which are induced in $G$.
\end{lemma}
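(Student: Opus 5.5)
We will count copies by embedding $T$ greedily, one vertex at a time, and show that at every step there are at least $d/2$ valid choices. Throughout we take $L$ to be spanning, so $|V(L)|=n$; this is how the bound $n(\frac d2)^{t-1}$ should be read. Order $V(T)$ as $v_1,\dots,v_t$ so that each $v_i$ with $i\ge 2$ has exactly one neighbour $v_{p(i)}$ with $p(i)<i$, for instance a BFS order. The image $x_1$ of $v_1$ may be any of the $n$ vertices.

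Suppose $x_1,\dots,x_{i-1}$ have been chosen, all edges $x_jx_{p(j)}$ lie in $L$, and no other pair among them is adjacent in $G$. Let $x=x_{p(i)}$. We must choose $x_i\in N_L(x)$ such that:
\begin{itemize}
\item $x_i$ is distinct from all previously chosen vertices;
\item $x_i$ is non-adjacent in $G$ to every $x_j$ with $j<i$ and $j\neq p(i)$.
\end{itemize}
Since $|N_L(x)|\ge d$ and $d\ge 2t$, the first condition costs at most $t$ choices. The real task is to ensure that, for each earlier $x_j$, the set $N_L(x)\cap N_G(x_j)$ is small, say of size at most $c|N_L(x)|$ with $c=\frac1{4tK}$. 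Then the forbidden vertices in $N_L(x)$ number at most $t+t\cdot c\cdot Kd\le d/2$. This uses $|N_L(x)|\le Kd$ together with $|N_L(x)|\ge d$, so at least $d/2$ choices survive.

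To guarantee this smallness we maintain an extra invariant. For all $j\neq k$ among the embedded vertices, $x_j\notin B(N_L(x_k))$, where $B(\cdot)$ is the bad set of Lemma~\ref{lem:small-bad-set} taken with parameter $c$. That lemma applies because $|N_L(x_k)|\ge d\ge s(2/c)^s$, which should follow from the hypothesis on $d$ after adjusting the constants. When choosing $x_i$ we must preserve the invariant in two directions.

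\emph{Forward direction: $x_i\notin B(N_L(x_k))$ for each earlier $k$.} By Lemma~\ref{lem:small-bad-set} each such bad set has size less than $2s/c$. Removing all of them costs at most $t\cdot 2s/c=8st^2K$ further choices. This is below $d/4$ by the hypothesis $d\ge st^22^{t+6}K^{t-1}$.

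\emph{Reverse direction: $x_k\notin B(N_L(x_i))$ for each earlier $k$.} Equivalently, $x_i$ must avoid the set $Z_k$ of vertices $z\in N_L(x)$ with $|N_L(z)\cap N_G(x_k)|\ge c|N_L(z)|$. This is the step I expect to be the main obstacle, since $N_G(x_k)$ is unbounded in $G$.

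My first attempt is to bound $|Z_k|$ by applying Lemma~\ref{lem:small-bad-set} to the set $W=N_G(x_k)\cap N_L(N_L(x))$, or more carefully to a second-neighbourhood set whose size is controlled by almost-regularity. If $|Z_k|$ were large, the bipartite graph between $Z_k$ and $W$ would be dense enough for Lemma~\ref{lem:JQ-lem} to produce a $K_{s,s}$, which is a contradiction. This density comparison is where the factor $K^{t-1}$ in the hypothesis should be spent.

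If that fails, the fallback is to drop the reverse invariant and handle the problem through the ordering instead. We would embed all children of a vertex $x$ immediately after $x$, since the BFS order already does this. Then the only earlier vertices $x_k$ that can be adjacent to many vertices of $N_L(x)$ are those embedded before $x$. For those, $x_k\in B(N_L(x))$ can be excluded at the time $x$ is chosen, by restricting $x$ to vertices whose $L$-neighbourhood is not dominated by any earlier vertex. The cost of that restriction would again be bounded via Lemma~\ref{lem:small-bad-set}, now applied inside $N_L(x_{p})$ with $x_p$ the parent of $x$.

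Once each step offers at least $d/2$ choices, multiplying over the $t-1$ steps gives at least $n(\frac d2)^{t-1}$ labeled embeddings. Each such embedding is a copy of $T$ in $L$ that is induced in $G$, which is the statement.
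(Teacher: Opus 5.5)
\textbf{Gap: the reverse-direction step cannot be done locally.} The estimate you are looking for, that $|Z_k|$ is small for a fixed earlier vertex $x_k$, is false, and so is the per-step claim that every admissible partial embedding has at least $d/2$ extensions.

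Take a $d$-regular bipartite graph $L$ with parts $X,Y$ and girth greater than $8$, and fix $w\in X$. Let $G$ be $L$ together with all edges from $w$ to the vertices at $L$-distance exactly $4$ from $w$.
\begin{itemize}
\item These vertices lie in $X$, so $L=G[X,Y]$ is unchanged.
\item Any two vertices other than $w$ still have at most two common neighbours in $G$. Hence $G$ is $K_{s,s}$-free for every $s\ge 3$.
\end{itemize}
Let $T$ be the path $v_1\cdots v_5$ in its natural (BFS) order. Put $x_1=w$, $x_2\in N_L(w)$ and $x_3\in N_L(x_2)\setminus\{w\}$. This partial embedding is induced in $G$ and satisfies both directions of your invariant. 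Yet every $z\in N_L(x_3)\setminus\{x_2\}$ has $d-1$ of its $d$ neighbours in $N_G(w)$. So $Z_1=N_L(x_3)\setminus\{x_2\}$, and no admissible $x_4$ exists.

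Your fallback imposes the same condition one step earlier, so it fails in the same way. If you drop the reverse condition altogether, $x_4$ can be chosen, but then $N_L(x_4)\setminus\{x_3\}\subseteq N_G(w)$ leaves no admissible $x_5$. This is also why your density attempt on $N_G(x_k)\cap N_L(N_L(x))$ must fail: that set can have size of order $d^2$ while each $z\in Z_k$ sees only about $d$ of it.

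\textbf{The missing idea: handle the reverse condition by global counting.} Bad configurations like the one above are rare, and the paper exploits this. Call a copy of $T_i$ good if it is induced in $G$ and no vertex of it lies in the bad set of another of its vertices, and induct on the number of good copies.
\begin{itemize}
\item Extending a good copy of $T_{i-1}$ needs only the forward information, and gives at least $\frac58 d$ choices. This yields at least $\frac54 n(\frac d2)^{i-1}$ extended copies.
\item An extension fails to be good only if some vertex lies in $B(v')$, where $v'$ is the new vertex. Since $|B(v')|<8stK$ for \emph{every} $v'$, there are at most $n\cdot 8stK\cdot t\cdot (Kd)^{i-2}$ such copies. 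To see this, choose $v'$, the offending vertex and its position, then embed the rest of the tree outward from $v'$ using $\Delta(L)\le Kd$.
\item The hypothesis $d\ge st^22^{t+6}K^{t-1}$ makes this at most $\frac14 n(\frac d2)^{i-1}$.
\end{itemize}
This comparison, not a density argument in a second neighbourhood, is where the factor $K^{t-1}$ is spent.

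Separately, applying Lemma~\ref{lem:small-bad-set} with $c=\frac1{4tK}$ needs $d\ge s(8tK)^s$. This does not follow from the stated hypothesis, but the paper's proof tacitly assumes it too.
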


\begin{proof}
For each vertex $x\in V(L)$, let $B(x)=\{y\in V(L): |N_G(y)\cap N_L(x)|\geq \frac{1}{4t} d\}$.
Since  $d\leq  |N_L(x)| \leq Kd$ and 
 $d\geq s(8tK)^s$, applying Lemma~\ref{lem:small-bad-set} with $c=\frac{1}{4Kt}$ we have $|B(x)| \leq 8stK$.
For convenience, we will call a tree $S$ on at most $t$ vertices in $L$ {\it good} if it is an induced subgraph of $G$ and for 
any $x\in V(S)$ and $y\in V(S)\setminus \{x\}$, $y\notin B(x)$. Let $v_1,\dots, v_t$ be an ordering of the vertices of $T$ such
that for each $i\in [t]$, $T_i=T[\{v_1,\dots, v_i\}]$ is a tree and $v_i$ is a leaf of $T_i$.
We use induction on $i$ to prove that for each $i\in [t]$, $L$ contains at least $n(\frac{d}{2})^{i-1}$ good copies of $T_i$. The basis step is trivial. Let $2\leq i\leq t$
and suppose the claim holds for $T_{i-1}$.

Let $u$ denote the unique neighbor of $v_i$ in $T_i$. Let $\Sc$ be the family of good copies of $T_{i-1}$ in $L$. 
By the induction hypothesis, $|\Sc|\geq n(\frac{d}{2})^{i-2}$.
Fix some $S \in \Sc$. 
Let $u'$ be the image of $u$ in $S$. Let $B =\bigcup_{x\in V(S)} B(x)$. By our earlier discussion, $|B| \leq 8st^2K<\frac{1}{8}d$.
Let $\Gamma(S) = N_L(u') \setminus (\bigcup_{x \in V(S)} N_G(x) \cup B)$. Since $S\in \Sc$, for each $x\in V(S)$, $|N_G(x)\cap N_L(u')|\leq \frac{1}{4t} d$. Hence, $|N_L(u')\setminus \bigcup_{x\in V(S)} N_G(x)|\geq \frac{3}{4}d$
and $|\Gamma(S)|\geq \frac{5}{8}d$. Note that for each $v'\in \Gamma(S)$, $S' = S \cup u'v'$ is a copy  of $T_i$ in $L$ that is induced in $G$. Also, for all $x$ in $S$ and $y \in S', y \neq x, y\notin B(x)$. Let $\T = \{ S \cup u'v': S \in \Sc \text{ and } v' \in \Gamma(S)\}$. 
Note $|\T| \geq |\Sc|\frac{5}{8}d\geq \frac{5}{4}n(\frac{d}{2})^{i-1}$. Note that a member $S'$ of $\T$ would be good
unless it contains a vertex in $B(v')$, where $v'$ is the image of $v_i$ in $S'$. Let us call such a member a {\it bad} member.

Since there are at most $n$ choices for $v'$, for each $v'$ we have $|B(v')|\leq 8stK$ and 
$L$ has maximum degree at most $Kd$, the number of bad members of $\T$ is at most $n(8st^2K) (Kd)^{i-2}$.
Using $d \geq st^2 2^{t + 4} K^{t - 1}$, we see that the number of bad members of $\T$ is no more than $\frac{1}{4}n(\frac{d}{2})^{i-1}$. 
Thus, there are at least $n(\frac{d}{2})^{i-1}$ good members of $\cH$. This completes the induction and the proof.
\end{proof}

Now, we are ready to prove Theorem \ref{thm:reduction}
\begin{proof}[Proof of Theorem \ref{thm:reduction}]
Let $G$ be an $n$-vertex $K_{s,s}$-free graph with a partition $X,Y$ of $V(G)$ such that
 $e(G[X,Y])\geq Cn^{2-\frac{\alpha}{\alpha+1}}$, where $C$ is sufficiently large. 
 Suppose for contradiction that $G[X,Y]$ does not contain a copy of $F(1)$ that is induced in $G$. Let $K=2^{4(\alpha+1)+2}$. By  Lemma \ref{lem:almost-reg},
there exists a $K$-almost-regular induced subgraph $L\subseteq G[X,Y]$ on $m$ vertices such that $e(L)\geq \frac{C}{4} m^{2-\frac{\alpha}{\alpha+1}}$ and 
$m\geq \Omega (n^{\frac{\alpha}{6\alpha}+4})$.
Then $L$ has minimum degree  $d\geq \frac{C}{2K} m^{\frac{1}{\alpha+1}}$.
Let $X'=X\cap V(L)$ and $Y'=Y\cap V(L)$. Let $G'=G[V(L)]$. Note that
$L=G'[X',Y']$. For convenience, we call any subgraph of $L$ {\it good} if 
it is an induced subgraph of $G'$ (and hence of $G$).
Let $\F$ denote the family of good $K_{1,2}$'s in $L$ that are centered in
$X'$. By Lemma \ref{lem:countingtrees}, without loss of generality, $|\F|\geq \frac{1}{8}m d^2$. 

Since $G$ is $K_{s,s}$-free, for
any set $T$ of leaves of a member of $\F$ we have that every set of $2s$ vertices contained 
in $N^*_L(T)$ has a subset $T'$ of $2$ vertices that are nonadjacent in $G$,
so that $T \cup T'$ induces  $K_{2, 2}$ in $G$. Thus, we have that the number of good
$K_{2, 2}$'s  in $L$ with one part being $T$ is at least 
$\frac{1}{\binom{2s}{2}}\binom{|N^*_L(T)|}{2}$.  Hence, summing over all such $T$
(for which there are most $\binom{m}{2}$ of them), we see that
the number of good $K_{2, 2}$'s in $L$ is
at least 
\[\sum_T \frac{1}{\binom{2s}{2}}\binom{|N^*_L(T)|}{2} 
\geq \frac{1}{\binom{2s}{2}}\binom{m}{2} \left(\frac{|\F|}{2\binom{m}{2}}\right)^2
\geq \frac{1}{128\binom{2s}{2}}d^4.\] 

By the pigeonhole principle, there exists and edge
$xy$ in $L$ such that the number of good $K_{2,2}$'s in $L$ containing $xy$
is at least $\frac{1}{128\binom{2s}{2}}\frac{d^3}{Km}$. Let $A=N_H(x)\setminus N_G(y)$ and $B=N_H(y)\setminus N_G(x)$. Since $L$ has maximum degree at most $Kd$, $|A|,|B|\leq Kd$.
Note that each good $K_{2,2}$ in $L$ containing $xy$ corresponds to a distinct edge
of $L[A,B]$. Hence, $e(L[A, B]) \geq \frac{1}{128\binom{2s}{2}}\frac{d^3}{Km}$. 

On the other hand, observe that $L[A, B]$ cannot contain an induced copy of $F$,
because the vertex set of such a copy together with $\{x,y\}$ would induce a copy
of $F(1)$  in $G$, contradicting $G$ not containing an induced $F(1)$.
Hence,
$e(L[A, B]) \leq \ex_\bip^*(2Kd, F, s) = O_s((Kd)^{2 - \alpha})$. Combining this with our lower bound on $e(L[A,B])$, we get $d^3m^{-1}=O_{\alpha,s}(d^{2-\alpha})$. 
Hence, $d=O_{\alpha,s}(m^{\frac{1}{\alpha+1}})$. By choosing $C$ to be sufficiently large, 
this contradicts our earlier claim that $d\geq \frac{C}{2K}m^{\frac{1}{\alpha+1}}$.
This completes the proof. 
\end{proof}

\section{More embedding lemmas}
In this section, we develop some useful lemmas for embedding induced subgraphs
into a $K_{s,s}$-free graph. 
First, we recall the  K\H{o}v\'ari-S\'os-Tur\'an theorem \cite{KST}.

\begin{lemma} [\cite{KST}] \label{lem:KST}
Let $s$ be a positive integer. If $H$ is a $K_{s,s}$-free bipartite graph with parts $X,Y$, each of size $m$.
Then $e(H)\leq (s-1)^{1/s} m^{2-1/s}+(s-1)m$.
\end{lemma}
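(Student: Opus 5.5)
The plan is to run the standard Kővári–Sós–Turán double-counting argument. The one extra ingredient is a factor that accounts for the constant $s-1$ in the lower-order term.

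Assume $m\ge s$; otherwise the bound is trivial, since $e(H)\le m^2\le (s-1)m+\dots$ whenever $m<s$. Count the pairs $(v,S)$ with $v\in Y$ and $S$ an $s$-subset of $X$ such that $S\subseteq N_H(v)$. Each $s$-set $S\subseteq X$ has common neighbourhood of size at most $s-1$ in $Y$, because $H$ is $K_{s,s}$-free. Hence
\[\sum_{v\in Y}\binom{d(v)}{s}\le (s-1)\binom{m}{s}.\]

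Next, let $\bar d=e(H)/m$ be the average degree on the $Y$ side. Apply Jensen's inequality to the convex function $x\mapsto\binom{x}{s}$, extended by $0$ for $x<s-1$, to get
\[m\binom{\bar d}{s}\le (s-1)\binom{m}{s}.\]
Now use the elementary bounds $\binom{\bar d}{s}\ge (\bar d-s+1)^s/s!$, valid when $\bar d\ge s-1$, and $\binom{m}{s}\le m^s/s!$. Together they give
\[(\bar d-(s-1))^s\le (s-1)m^{s-1}, \qquad\text{so}\qquad \bar d\le (s-1)^{1/s}m^{1-1/s}+(s-1).\]
If instead $\bar d<s-1$, this last inequality holds trivially.

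Multiplying by $m$ gives $e(H)=m\bar d\le (s-1)^{1/s}m^{2-1/s}+(s-1)m$, as required.

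The only delicate point is the convexity/Jensen step. One has to use the convex extension of $\binom{x}{s}$ and treat the regime $\bar d<s-1$ separately. I expect no other obstacle, since this is the classical argument of Kővári, Sós and Turán.
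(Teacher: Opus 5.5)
Your proof is correct and is the classical K\H{o}v\'ari--S\'os--Tur\'an argument: double count the pairs $(v,S)$ with $v\in Y$ and $S\subseteq N_H(v)$ an $s$-set, then apply Jensen to the convex extension of $x\mapsto\binom{x}{s}$ (it is convex because the slope jumps from $0$ to $1/s$ at $x=s-1$), and dispose of the case $\bar d<s-1$ trivially. The paper gives no proof of its own and simply cites this classical result, so your route is exactly the argument being invoked.
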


The next technical lemma builds on several ideas from \cite{HMST} and \cite{JP} and provides one of the main ingredients of our arguments.
Given a bipartite graph $H$ with ordered partition $(A,B)$, the {\it neighborhood hypergraph} of $H$ induced on $A$, denoted
by $\F_A(H)$ is the  multi-hypergraph whose edge set is the multi-set $\{N_H(y): y\in Y\}$. A hypergraph $\F$ is called {\it $k$-partite} if
its vertices can be partitioned into $k$ parts so that each hyperedge contains at most one vertex in each part.

If $\F$ is a $k$-uniform hypergraph and $m$ is a positive integer, then the {\it $m$-blowup} of $\F$, denoted by $\F[m]$ is the hypergraph obtained as follows.
First, we replace each vertex $v$ of $\F$ with a set $S_v=\{v^{(1)},\dots, v^{(m)}\}$ of $m$ vertices so that the $S_v$'s are pairwise disjoint.
Then we replace each edge $e$ of $F$ with the complete $k$-uniform $k$-partite hypergraph with parts $\{S_v: v\in e\}$. We will call the $S_v$'s the {\it parts} of $\F[m]$.

\begin{lemma}[Key lemma] \label{lem:rich-independent}
Let $s\geq 2$ be an integer.  Let $H$ be a bipartite graph with $h$ vertices with an ordered partition $(A,B)$.
Let $m=2^{ h + s + 3}s^s h^{2s}$ and $C(H,s)= s(4h)^{s + 1}$.
Let $G$ be a $K_{s,s}$-free graph and $L$ a bipartite subgraph of $G$ with parts $X,Y$.
Suppose    $\D:=\{S\subseteq X: |N^*_L(S)|\geq C(H,s)\}$ contains the $m$-blowup of $\F_A(H)$. 
Then $L$ contains a copy of $H$ that is an induced subgraph of $G$.
\end{lemma}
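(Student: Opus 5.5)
We embed $H$ in two stages: first the side $A$ inside the blowup, then the side $B$ one vertex at a time. Write $A=\{a_1,\dots,a_p\}$ and $B=\{b_1,\dots,b_q\}$, so the hyperedges of $\F_A(H)$ are the sets $N_H(b_j)$.

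\textbf{Embedding $A$.} By hypothesis, $\D$ contains $\F_A(H)[m]$. So for each $a_i$ there is a part $S_{a_i}\subseteq X$ of size $m$. For every $j$ and every transversal choice of one vertex from each $S_{a_i}$ with $a_i\in N_H(b_j)$, the chosen set has at least $C(H,s)$ common neighbours in $L$, all lying in $Y$.

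Since $X$ is one side of $L$, it is independent in $L$ but not necessarily in $G$. We therefore pick $x_i\in S_{a_i}$ with $\{x_1,\dots,x_p\}$ independent in $G$, using $K_{s,s}$-freeness. Restrict $G$ to $\bigcup_i S_{a_i}$, a set of size $pm$. By Lemma~\ref{lem:KST}, or directly by Ramsey plus $K_{s,s}$-freeness (any large clique contains $K_{s,s}$), every $K_{s,s}$-free graph on $m$ vertices has an independent set of size $m^{c}$ for some $c=c(s)>0$. A greedy argument across the parts should give a transversal independent set. Take $x_1\in S_{a_1}$ with few neighbours in each later part; then by Lemma~\ref{lem:small-bad-set}, only fewer than $2s/c$ vertices of $S_{a_1}$ have at least $c|S_{a_j}|$ neighbours in a given $S_{a_j}$. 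Delete the neighbours of $x_1$ from each later part, which shrinks each part by a factor $(1-c)$, and iterate. With $c=1/(2h)$, each part keeps at least $m/2$ vertices throughout, and the requirement $m\ge s(4h)^s$ from Lemma~\ref{lem:small-bad-set} is comfortably met by the stated $m$. This fixes an independent transversal $x_1,\dots,x_p$.

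\textbf{Embedding $B$.} We choose $y_j\in N^*_L(\{x_i:a_i\in N_H(b_j)\})$, a set of size at least $C(H,s)$, subject to three conditions: $y_j$ is nonadjacent in $G$ to every $x_i$ with $a_i\notin N_H(b_j)$, nonadjacent to the earlier $y_{j'}$, and distinct from them. For the first condition, Lemma~\ref{lem:small-bad-set} is awkward because the $x_i$ are single vertices. Instead, use that $G$ is $K_{s,s}$-free: any $s$ vertices have fewer than $s$ common neighbours.

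This forces the choice of the $x_i$ to be better. When picking the $x_i$, we also ensure that for each non-incident $x_i$, the common neighbourhood $N^*_L(T_j)$ (where $T_j$ is the embedded image of $N_H(b_j)$) is mostly not adjacent to $x_i$. The mechanism is a counting/blowup argument: if for most choices of $x_i\in S_{a_i}$ a large fraction of $N^*(T_j)$ were adjacent to $x_i$, then $s$ such choices would have $s$ common neighbours, giving $K_{s,s}$. Concretely, fix $T_j$ first. The vertices $u\in S_{a_i}$ adjacent to at least a $1/(4h)$ fraction of $W=N^*(T_j)$ form $B(W)$, which by Lemma~\ref{lem:small-bad-set} has size less than $8hs$, provided $|W|\ge s(8h)^s$. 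This holds since $C(H,s)=s(4h)^{s+1}$ is of that order; the constants need checking.

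\textbf{Order of choices and the main obstacle.} The difficulty is that the $T_j$ depend on the $x_i$. So we choose all $x_i$ sequentially in some order, each avoiding:
\begin{enumerate}[label=(\roman*)]
\item neighbours of earlier $x$'s, and
\item the bad sets $B(W)$ for every partial transversal $W$ already determined.
\end{enumerate}
We must also handle hyperedges not yet fully determined; for these, we average over the completions to bound how many $u$ are bad for many of them. Each step excludes at most $2^h\cdot 8hs+$(neighbour losses) vertices from a part of size $m$. This explains the size $m=2^{h+s+3}s^sh^{2s}$.

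Finally, with each $x_i$ non-incident to $b_j$ adjacent to at most $|W_j|/(4h)$ of $W_j$, and each earlier $y_{j'}$ adjacent to at most $|W_j|/(4h)$ of $W_j$ (after excluding the $y$ lying in $B(W_j)$ symmetrically), at least half of $W_j$ remains available. So $y_j$ can be chosen, and the resulting copy of $H$ is induced in $G$. The main obstacle is controlling the adjacency between $y_j$ and earlier $y$'s while keeping the bad-set bookkeeping consistent. I would first try to handle it by also requiring each $y_j\notin B(W_{j'})$ for all later $j'$, which costs fewer than $8hs$ vertices each.
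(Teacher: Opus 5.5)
Your embedding of $B$, including the $y$--$y$ adjacency you flag as the main obstacle, is essentially sound; that is the routine part. (The paper does the same greedy step, and uses Hall's theorem to make the candidate sets disjoint so that the ``outside $W$'' condition in Lemma~\ref{lem:small-bad-set} is automatic.) The genuine gap is in the embedding of $A$, at exactly the step you leave as ``average over the completions''.

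For $a_i\notin e_j=N_H(b_j)$, the final $x_i$ must lie outside $B(W_j)$. But $W_j=N^*_L(T_j)$ may depend on vertices chosen after $x_i$, and no ordering avoids this: if $a_1b_1,a_2b_2$ is an induced matching in $H$, each of $x_1,x_2$ must be good for the other's neighbourhood. Lemma~\ref{lem:small-bad-set} bounds the bad vertices for a fixed $W$. It does not bound the bad sets $W$ for a fixed vertex: one vertex may be $G$-adjacent to all of $Y$ without creating a $K_{s,s}$.

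So averaging only lets you pick $x_i$ that is bad for a small fraction of the completions of $e_j$. You would then have to restrict every later choice in $e_j$ so that the conditional fraction of bad completions stays small. This must hold simultaneously for all pairs $(i,j)$, and relative to parts already shrunk by your neighbour deletions, which can inflate such a fraction by up to $2^{|e_j|}$. Your scheme places no constraint on later choices. Your count ``$2^h\cdot 8hs+$ neighbour losses'' only covers exclusions of $x_i$ itself, so nothing ensures that the completion actually chosen is one for which $x_i$ is good.

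The paper removes the ordering problem by taking a uniformly random transversal $\phi$, choosing each $\phi(a)\in S_a$ independently.
\begin{itemize}
\item For $u\notin e$, $\phi(u)$ is independent of $\phi(e)$. Hence $\Pr[\phi(u)\in B(N^*_L(\phi(e)))]\le 4sh/m$ directly from Lemma~\ref{lem:small-bad-set} with $c=1/(2h)$.
\item Independence of $\phi(A)$ in $G$ comes not from greedy deletion but from Lemma~\ref{lem:KST}. It gives at most $2sm^{2-1/s}$ edges between two parts, so $\Pr[\phi(u)\phi(v)\in E(G)]\le 2sm^{-1/s}$.
\end{itemize}
A union bound then gives a transversal that is both independent and good for every pair $(u,e)$ with $u\notin e$; the value of $m$ is calibrated for exactly this. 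A derandomised sequential version via conditional expectations would also work, but only if you keep the full product of the parts and count edges among the $x_i$ as failures instead of deleting neighbours.

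Separately, your constant $c=1/(4h)$ needs $|W|\ge s(8h)^s$, which $C(H,s)=s(4h)^{s+1}$ does not give when $2^s>4h$. Use $c=1/(2h)$ instead. This suffices because at most $h-1$ vertices other than $b_j$ and its neighbours must be avoided in $W_j$, so more than half of $W_j$ survives these removals.
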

\begin{proof}
For each $W\subseteq V(G)$, let $B(W)=\{x\in V(G)\setminus W: |N_G(x)\cap W|\geq (1/2h)|W|\}$.
Since $G$ is $K_{s,s}$-free, by Lemma \ref{lem:small-bad-set}, we have 
\begin{equation} \label{eq:bound-on-BW}
\forall W\subseteq V(G), \mbox{ if } |W|\geq s(4h) ^s, \mbox{ then } |B(W)|\leq 4sh.
\end{equation}
Recall that $\F_A(H)$ is the neighbhorhood multi-hypergraph of $H$ induced on $A$. Suppose $\D$ contains the $m$-blowup $\F^*$ of $\F_A(H)$, with parts $\{S_v: v\in A\}$. Let $\phi$ be a 
random mapping from $A$ to $\bigcup_{v\in A} S(v)$ where each $v\in A$ is mapped to a vertex
in $S_v$ uniformly at random.
For each $e\in \F_A(H)$, let $\phi(e)=\{\phi(v): v\in e\}$. Consider any $e\in \F_A(H)$.
Since $\phi(e)\in \D$, by definition, we have $|N^*_L(\phi(e))|\geq C(H,s)\geq s(4h)^s$. By \eqref{eq:bound-on-BW},  $|B(N^*_L(\phi(e)))|\leq 4sh$. For any $u\in A, u\notin e$, $\mathbb{P}(\phi(u)\in B(N^*_L(\phi(e)))\leq 4sh/m$. Thus, $\mathbb{P}(\exists u\in A, u\notin e, 
\phi(u)\in B(N^*_L(\phi(e))))<4sh^2/m\leq 1/2^{h+1}$, by our choice of $m$. Since $\F_A(H)$
has at most $2^h$ hyperedges, by the union bound, the probability that there exists $e\in \F_A(H)$ such that there exists $u\in A, u\notin e, \phi(u)\in B(N^*_L(\phi(e)))$ is less than $1/2$.

Consider now any two vertices $u,v\in A$. Since $G$ is $K_{s,s}$-free, by Lemma \ref{lem:KST}, the number of edges of $G$ between $S_u$ and $S_v$ is at
most $(s-1)^{1/s} m^{2-1/s}+(s-1)m\leq 2sm^{2-1/s}$. Hence, the probability of $\phi(u)\phi(v)\in E(G)\leq 2sm^{-1/s}$. By our choice of $m$, we have
\[\mathbb{P}(\exists u,v\in A \phi(u)\phi(v)\in E(G))\leq \binom{h}{2} 2s m^{-1/s}<\frac{1}{2}.\]
Thus, there exists a choice of $\phi$ such that
\begin{enumerate}
\item $\{\phi(v): v\in A\}$ is an independent set in $G$.
\item for each $e\in \F_A(H)$ and for each $u\in A,
u\notin e$, $\phi(u)\notin B(N^*_L(\phi(e)))$.
\end{enumerate}

 Fix such a choice of $\phi$.
 For each $e\in \F_A(H)$, let $\Gamma(e)=N^*_L(\phi(e))\setminus \bigcup \{N_G(\phi(u)): u\in A, u\notin e\}$. Let $t = 2 s (4h)^s.$
 Since $\phi$ satisfies condition 2 and $C(H,s)=s(4h)^{s+1}$, we have
 \begin{equation} \label{lem:induced-neigbhorhood}
 \forall e\in \F_A(H), |\Gamma(e)|\geq 
(1-h\cdot\frac{1}{2h}) |N^*_L(\phi(e))|\geq \frac{C(H,s)}{2}= ht.
\end{equation}
Suppose $B=\{b_1,\dots, b_q\}$. For each $i\in [q]$, let $e_i=N_H(b_i)$. Then
$\F_A(H)=\{e_1,\dots, e_q\}$. Since $|\F_A(H)|=q$ and for each $e\in \F_A(H), |\Gamma(e)| \geq qt$, by Hall's theorem, there exist
pairwise disjont $t$-sets $U_1,\dots, U_q$ such that for each $i\in [q]$ $U_i\subseteq \Gamma(e_i)$.
By definition, for each $i\in [q]$ and each $y\in U_i$, $N_L(y)\cap \phi(A)=N_G(y)\cap \phi(A)=\phi(e_i)
=\phi(N_H(b_i))$. We show that $\phi$ can be extended to an injection of $A\cup B$ into
$V(G)$ such that $L[\phi(A\cup B)]=G[\phi(A\cup B)]\cong H$. 

We will embed $b_1,\dots, b_q$ in that order. We use induction to show that
for each $i=1,\dots, q$, we can embed $b_i$ into $U_i$ as $\phi(b_i)$ such that $\phi(b_1),\dots, \phi(b_i)$ are pairwise nonadjacent in $G$ and for each $j=i+1,\dots, q$,
$|U_j\setminus \bigcup_{\ell=1}^i N_G(\phi(b_\ell))|\geq (1-1/2h)^i |U_j|$.
For the basis step, for $j=2,\dots, q$, since $|U_j|\geq t \geq  s (4h)^s$, by \eqref{eq:bound-on-BW}, we have $|B(U_j)|\leq 4sh$. Hence $|\bigcup_{j=2}^q B(U_j)|\leq 4sh^2<|U_1|$. Let $\phi(b_1)$
be any vertex of $U_1\setminus \bigcup_{j=2}^q B(U_j)$. Then for each $j=2,\dots, q$,
$|U_j\setminus N_G(\phi(b_1))|\geq (1-1/2h)|U_j|$. For the induction step, let $1\leq i\leq q-1$
and suppose we have found $\phi(b_1),\dots \phi(b_i)$ that satisfy the conditions.
For $j=i+1,\dots, q$, let $U'_j=U_j\setminus \bigcup_{\ell=1}^i N_G(\phi(b_\ell))$.
By the induction hypothesis, $|U'_j|\geq (1-1/2h)^i|U_j|\geq (1-1/2h)^it\geq s(4h)^s$, since $(1 - 1/2h)^i \geq \frac{1}{2}$. By \eqref{eq:bound-on-BW},
$|\bigcup_{j=i+2}^q B(U'_j)|\leq q(4sh)<4sh^2<|U'_{i+1}|$. Let $\phi(b_{i+1})$ be any
vertex in $U'_{i+1}\setminus \bigcup_{j=i+2}^q B(U'_j)$. Since $\phi(b_{i + 1}) \not \in \bigcup_{j=i+2}^q B(U'_j)$, 
for $j=i+2,\dots, q$, we have 
$|U_j\setminus \bigcup_{\ell=1}^{i+1} N_G(\phi(b_\ell)|\geq (1-1/2h)|U'_j|\geq (1-1/2h)^{i+1}|U_j|$.
This completes the induction. By our procedure, $\{\phi(b_1),\dots, \phi(b_q)\}$ is independent in $G$ and
for each $i\in [q]$, $N_L(\phi(b_i)\cap \phi(A)=N_G(\phi(b_i))\cap \phi(A)=\phi(N_H(b_i))$. So, $L[\phi(A\cup B)]=G[\phi(A\cup B)]\cong H$, as desired.
 \end{proof}

\begin{proposition} \label{prop:heavy-stars}
Let $K\geq 1$ be a real.
Let $h,p,s$ be positive integers. Let $H$ be a bipartite graph with $h$ vertices and an ordered bipartition $(A,B)$ such that the neighborhood hypergraph $\F_A(H)$ of $H$ induced by $A$ is $p$-partite. Let $C(H,s)=s(4h)^{s+1}$.
For any $\varepsilon>0$, there exists a constant
$C_1=C_1(\varepsilon,h, p, s, K)$  such that the following holds.
Let $G$ be a $K_{s,s}$-free graph on $n$ vertices. Let $L$ be a 
$K$-almost-regular bipartite subgraph of $G$ with minimum degree $d\geq C_1$.
Suppose $L$ does not contain a copy of $H$ that is induced in $G$. Let $(X,Y)$ be a bipartition of $L$. 
Then the number of $p$-stars in $L$ with a leaf set $S$ satisfying $|N^*_L(S)|\geq C(H,s)$  is at most $\varepsilon nd^p$.
\end{proposition}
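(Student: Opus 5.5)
The plan is to argue by contradiction through a hypergraph supersaturation/blowup argument, and then apply the Key Lemma (Lemma~\ref{lem:rich-independent}). Consider the family $\D=\{S\subseteq X\cup Y: |N^*_L(S)|\geq C(H,s)\}$; we restrict to $p$-sets $S$ lying in one side, since leaf sets of $p$-stars in a bipartite $L$ lie in one part. Suppose there are more than $\varepsilon n d^p$ $p$-stars in $L$ whose leaf set $S$ lies in $\D$. Each such leaf set $S$ is counted by at most $|N^*_L(S)|$ centers. Counting this way only bounds the number of stars from above by $\sum_{S\in\D}|N^*_L(S)|$, which is not directly a count of sets. Instead I will work with the $p$-uniform hypergraph $\D_p$ of $p$-sets in $\D$ (say in $X$, after choosing the side carrying at least half the stars), and aim to show that $\D_p$ has positive density in a suitable sense, so that it contains an $m$-blowup of the $p$-partite hypergraph $\F_A(H)$.

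The key step is to make ``positive density'' precise, since $|X|$ may be much larger than $d$. Pick a center $v$ with many heavy stars, i.e. $v$ whose neighborhood $N_L(v)$ (of size between $d$ and $Kd$) contains at least $\varepsilon d^p/2$ (up to a factor $p!$) $p$-sets belonging to $\D$. Such a $v$ exists by averaging over at most $n$ centers. Then inside $N_L(v)$, the hypergraph $\D_p[N_L(v)]$ has density at least roughly $\varepsilon p!/(2K^p)$ among all $p$-subsets of a set of size at most $Kd$. By the Erd\H{o}s hypergraph supersaturation / Erd\H{o}s's theorem that $\ex(N, K^{(p)}_{m,\dots,m})=o(N^p)$, once $d\geq C_1$ is large enough (depending on $\varepsilon,K,p,m$), $\D_p[N_L(v)]$ contains the complete $p$-partite $p$-graph $K^{(p)}_{M,\dots,M}$ with $M=mh$.

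From this complete $p$-partite hypergraph, embed the $m$-blowup of $\F_A(H)$. Since $\F_A(H)$ is $p$-partite with parts $P_1,\dots,P_p$ of $A$, assign to each vertex $a\in A$ lying in $P_i$ a disjoint $m$-subset $S_a$ of the $i$-th part of size $M\geq mh$. Each hyperedge $e=N_H(b)$ has at most one vertex in each $P_i$, so every transversal of $\{S_a: a\in e\}$ is a subset of some $p$-set in $K^{(p)}_{M,\dots,M}\subseteq\D$. Since $\D$ is closed downward ($N^*_L$ only grows as $S$ shrinks), all these sets lie in $\D$. Hyperedges of size less than $p$ are handled this way too, and empty neighborhoods are trivial because $N^*_L(\emptyset)$ is everything. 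Then Lemma~\ref{lem:rich-independent} gives a copy of $H$ in $L$ induced in $G$, which is a contradiction.

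The main obstacle I expect is the bookkeeping of sides. Lemma~\ref{lem:rich-independent} is stated for the blowup within $X$ with $B$ mapped to $Y$, so the side carrying the heavy stars must play the role of $X$. If the heavy stars have leaves in $Y$, I will rename the parts; the lemma is symmetric in the roles of $X$ and $Y$. I also need that $C_1$ depends only on $\varepsilon,h,p,s,K$, which holds because the supersaturation threshold depends only on $p$, $M$ and the density $\varepsilon p!/(2K^p)$.
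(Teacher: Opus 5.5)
Your proposal is correct and follows the paper's argument. You fix a center $v$ (the paper bounds every $v$ directly and sums, while you argue by contradiction and average), note that the $p$-uniform hypergraph of heavy $p$-subsets of $N_L(v)$ has density bounded below in terms of $\varepsilon,K,p$, use Erd\H{o}s's theorem to find an $m$-blowup of $\F_A(H)$, and conclude with Lemma~\ref{lem:rich-independent}. Your detour through $K^{(p)}_{mh,\dots,mh}$, combined with the downward closure of $\D$, is a slightly more careful treatment of hyperedges of $\F_A(H)$ of size less than $p$, a case that the paper's direct appeal to $\ex(d,\F_A(H)[m])=o(d^p)$ glosses over.
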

\begin{proof}
Let $m=2^{h+s+3}s^sh^{2s}$ as in Lemma \ref{lem:rich-independent}. Let $(X,Y)$ be a bipartition of $L$.
Consider any vertex $v$. Say $v\in Y$. Since $\F_A(H)[m]$ is $p$-partite, by a theorem of Erd\H{o}s \cite{Erdos}, $\ex(d,\F_A(H)[m])=o(d^p)$.
By choosing $C_1$ to be large enough, we have $\ex(|N_L(v)|,\F_A(H)[m])\leq (\varepsilon/K^p)\binom{|N_L(v)|}{p}$.
Let $\D$ be the $p$-uniform hypergraph on $N_L(v)\subseteq X$ such that a $p$-set $S$ in $N_L(v)$ is an edge in $\D$
if and only if $|N^*_L(S)|\geq C(H,s)$. If $|\D|>\varepsilon d^p$, then we have
$|\D|>(\varepsilon/K^p)\binom{|N_L(v)|}{p} \geq \ex(|N_L(v)|, \F_A(H)[m])$. So, $\D$ contains a copy of $\F_A(H)[m]$.
By Lemma \ref{lem:rich-independent}, $L$ contains a copy of $H$ that is induced in $G$, a contradiction. Hence, $|\D|\leq \varepsilon d^p$.
Since this holds for any vertex $v$, the claim follows.
\end{proof}

\begin{proposition} \label{prop:asymmetric}
 Let $G$ be a $K_{s,s}$-free graph. Let $H$ be a one-side $p$-bounded bipartite graph. There exists constant $C_2, C_3$ depending only on $H, s$ such that
the following holds.
Let $M\subseteq G$ be a bipartite subgraph with parts $X,Y$, such that  $d_M(y) \geq \delta_Y \geq C_2$. If $e(M) \delta_Y^{p - 1} \geq C_3|X|^p$, then $M$ contains a copy of $H$ that is induced in $G$. 
\end{proposition}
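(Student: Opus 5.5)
The plan is to reduce the statement to the Key lemma (Lemma~\ref{lem:rich-independent}) by a counting argument on $p$-stars centred in $Y$, in the spirit of Proposition~\ref{prop:heavy-stars}. Let $(A,B)$ be the bipartition of $H$ in which every vertex of $B$ has degree at most $p$. Set $h=|V(H)|$, $C(H,s)=s(4h)^{s+1}$ and $m=2^{h+s+3}s^sh^{2s}$, as in Lemma~\ref{lem:rich-independent}. Call a set $S\subseteq X$ \emph{rich} if $|N^*_M(S)|\geq C(H,s)$. Subsets of rich sets are rich, so it suffices to show that the $p$-uniform hypergraph $\D_p$ of rich $p$-subsets of $X$ is dense enough to contain the structure required by Lemma~\ref{lem:rich-independent}.

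\textbf{Step 1: counting stars.} The number of $p$-stars in $M$ with centre in $Y$ is
\[
\sum_{y\in Y}\binom{d_M(y)}{p}\;\geq\; c_p\sum_{y\in Y} d_M(y)\,\delta_Y^{p-1}\;=\;c_p\, e(M)\,\delta_Y^{p-1}\;\geq\; c_pC_3|X|^p .
\]
This uses $d_M(y)\geq\delta_Y\geq C_2\geq 2p$, so that $\binom{d}{p}\geq c_p\, d\,\delta_Y^{p-1}$ with $c_p>0$ depending only on $p$.

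\textbf{Step 2: most stars have rich leaf sets.} A $p$-star whose leaf set $S$ is not rich is counted at most $C(H,s)$ times for that $S$. Hence such stars number at most $C(H,s)\binom{|X|}{p}$. Choosing $C_3\geq 2C(H,s)/c_p$, at least half of the stars have rich leaf sets.

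\textbf{Step 3: density of rich $p$-sets.} We now need $|\D_p|\geq \varepsilon |X|^p$, and this is the main obstacle. Each rich set could a priori be the leaf set of up to $|Y|$ stars, which is unbounded relative to $|X|$. I would handle this by splitting rich sets by the size of their common neighbourhood, or more simply by applying the $K_{s,s}$-freeness bound from Lemma~\ref{lem:small-bad-set}/Lemma~\ref{lem:KST}, to control $\sum_{S}|N^*_M(S)|$ in terms of $|\D_p|$. If that fails, the fallback is to pass to a subgraph of $M$ in which every $y$ keeps exactly $\lceil\delta_Y\rceil$ neighbours, and to rerun the count with the normalised degrees. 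Either way the target is $|\D_p|\geq \varepsilon(p,s,H)|X|^p$ once $C_3$ is large.

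\textbf{Step 4: finding the blow-up and finishing.} Given this positive density, Erd\H{o}s's theorem (as used in Proposition~\ref{prop:heavy-stars}) yields a complete $p$-partite $p$-graph $K^{(p)}(N)$ inside $\D_p$ for any fixed $N$, provided $|X|$, and hence $\delta_Y\leq|X|$, is large; this is ensured by taking $C_2$ large. Every set meeting each part of this complete $p$-partite $p$-graph at most once is rich, being a subset of a rich $p$-set. Suppose, as in Proposition~\ref{prop:heavy-stars}, that $\F_A(H)$ is $p$-partite (every edge has size at most $p$, and we require its vertices to be colourable with $p$ colours so that each edge is rainbow). Then the blow-up $\F_A(H)[m]$ embeds into the downward closure of $K^{(p)}(N)$ with $N=hm$, so it lies in $\D=\{S:|N^*_M(S)|\geq C(H,s)\}$. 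Lemma~\ref{lem:rich-independent} then gives a copy of $H$ in $M$ that is induced in $G$. I expect the applications to satisfy this partiteness condition, since the trees there can be properly coloured accordingly. If one insists on a completely general one-side $p$-bounded $H$, I would first enlarge $H$ by pendant $A$-vertices so that the rainbow colouring exists. An induced copy of the enlarged graph contains an induced copy of $H$.
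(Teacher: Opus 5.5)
Your Steps 1 and 2 are sound, but Step 3 aims at a bound that does not follow from the hypotheses, so the plan cannot be completed as written. The rich $p$-subsets of $X$ need not have positive density in $\binom{X}{p}$.

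Take $G=M$ to be a disjoint union of $k$ copies of a $K_{s,s}$-free bipartite graph $(X_i,Y_i)$. Each copy has $|X_i|=N$, every $y$ of degree exactly $\delta\ge C_2$, and $|Y_i|\ge C_3k^{p-1}(N/\delta)^p$. For $s\ge p+2$ such copies exist: let each $y$ pick a random $\delta$-subset of $X_i$ with $N/\delta$ large, and delete one $Y$-vertex from each $K_{s,s}$. Then
\[
e(M)\,\delta_Y^{p-1}=k|Y_i|\,\delta^{p}\ge C_3k^pN^p=C_3|X|^p .
\]
However, every neighbourhood lies inside one $X_i$, so every rich $p$-set lies inside one $X_i$. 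Hence $|\D_p|\le k\binom{N}{p}\le k^{1-p}|X|^p$, which is $o(|X|^p)$ as $k\to\infty$ when $p\ge 2$.

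None of your suggested repairs can rescue this. $K_{s,s}$-freeness only limits common neighbourhoods of $s$-sets, not of $p$-sets with $p<s$, so it cannot bound how many stars a single rich $p$-set carries. Normalising degrees does nothing here, since the degrees in the example are already exactly $\delta$.

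The missing idea is to localise at a single centre rather than project onto $X$. Steps 1--2 already show that at least half of all pairs $(y,S)$ with $S\in\binom{N_M(y)}{p}$ have $S$ rich. Hence some single $y\in Y$ has at least half of the $p$-subsets of $N_M(y)$ rich, and by enlarging $C_3$ you can make this any fraction $1-\eta$. Since $|N_M(y)|\ge\delta_Y\ge C_2$, the Tur\'an-type bound applies inside $N_M(y)$ and produces the blow-up, and Lemma~\ref{lem:rich-independent} finishes. This is why the hypothesis lower-bounds $\delta_Y$ rather than $|X|$.

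This localised argument is exactly the paper's proof. The paper picks $y$ uniformly at random and compares $\mathbb{E}\binom{|N(y)|}{p}$ with the expected number of non-rich $p$-subsets of $N(y)$. It chooses $C_3$ so that some $y$ has a rich fraction at least $\gamma=(\pi+1)/2$, where $\pi$ is the Tur\'an density of $\F_A(H)$.

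Working locally also removes your restriction to $p$-partite $\F_A(H)$, because the local density can be pushed above $\pi$. Your pendant-vertex fix for the general case would not work in any event: attaching pendant $A$-vertices raises $B$-degrees above $p$. For example, with $H=C_6$ and $p=2$, $\F_A(H)$ is a triangle and admits no rainbow $2$-colouring.
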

\begin{proof}
 Let $\pi$ be the Tur\'an density of $\F_A(H)$, and set $\gamma = \frac{\pi + 1}{2}$. Let $m$ be as given in Lemma~\ref{lem:rich-independent}, and $C_2$ be sufficiently large so that 
 \begin{equation} \label{eq:turan-density}
 \forall n\geq C_2,
 \ex(n, \F_A(H)[m]) \leq \gamma \binom{n}{p}.
 \end{equation}
 Note that since $\gamma > \pi$, such a choice of $C_2$ exists. Let $C_3 = (1 - \gamma)^{- 1} sp^p(4h)^{s + 1}  $.

By our conditions, $e(M)\geq \delta_Y |Y|$ and $e(M) \delta_Y^{p - 1} \geq C_3|X|^p$. Hence,
\[[e(M)]^{p} \geq [e(M)]\delta_Y^{p-1}|Y|^{p - 1}\geq  (1 - \gamma)^{- 1}s p^p(4h)^{s + 1} |X|^{p}|Y|^{p - 1}.\] 
Let $y$ be a uniformly random vertex from $Y$ and let $T = N(y)$. Clearly, 

$$\mathbb{E}\left[ \binom{|T|}{ p } \right] \geq \binom{e(M)/|Y|}{p } \geq \frac{[e(M)]^{p}}{p^{p }|Y|^{p }} \geq \frac{(1 - \gamma)^{- 1} s(4h)^{s + 1}|X|^{ p}}{|Y|}.$$

Call a set $S$ of vertices in $X$ {\it bad} if $|N^*_M(S)|\leq s(4h)^{s + 1}$. Let $B$ denote the number of
bad $p$-sets in $T$. Then, $\mathbb{E}[B] \leq \frac{ s(4h)^{s + 1} |X|^{p} }{|Y|}$. 
Hence, $(1-\gamma) \mathbb{E}\left[ \binom{|T|}{ p} \right] \geq \mathbb{E}[B]$.
Hence, for some choice of $y$, we have $B\leq (1-\gamma){\binom{|T|}{p}}$. Fix such a $y$. By our choice, 
the number of $p$-sets $S$ in $T$ with $|N^*_M(S)| \geq  s(4h)^{s + 1}$ is greater than $\gamma \binom{|T|}{p}$.   Since $|T|=d_M(y)\geq C_2$, by \eqref{eq:turan-density}, there is an $m$-blowup of $\F_A(H)$ in $T$ with edges corresponding to $p$-sets $S$ with $|N^*_M(S)| \geq  s(4h)^{s + 1}$. Applying Lemma~\ref{lem:rich-independent}, the proof is complete. 
\end{proof}

Let $F$ be a graph and $R$ a proper subset of $V(F)$. Let $\ell$ be a positive integer. Recall that $F_R^\ell$ is
the graph consisting of $\ell$ labeled copies $F_1,\dots, F_\ell$ of $F$ that
such that for each $v\in R$, the images of $v$ in $F_1,\dots F_\ell$ are the same
and that $F_1,\dots, F_\ell$ are pairwise vertex disjoint outside the common image of $R$. If $G$ is a graph
then we call a copy of $F^\ell_R$ in $G$ {\it semi-induced} if the $F_i$'s are induced copies of $F$ in $G$.

\begin{lemma}  \label{lem:semi-induced-to-induced}
Let $\ell,s$ be positive integers.
Let $T$ be a tree. Let $R$ denote the set of leaves of $T$. 
There exists a constant $\lambda=\lambda(T,\ell)$ such that the following holds.
Let $G$ be a $K_{s,s}$-free graph. If $F^*$ is a semi-induced copy of $F^\lambda_R$ in $G$,
then $F^*$ contains an copy of $F^\ell_R$ that is induced in $G$.
\end{lemma}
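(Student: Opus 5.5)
Since each copy $F_i$ of $T$ in $F^*$ is already induced in $G$, only two kinds of edges can spoil inducedness: edges of $G$ between non-root vertices of different copies $F_i,F_j$, and edges of $G$ between a non-root vertex of $F_i$ and a root vertex that $F_i$ does not use. The second kind cannot occur, because every root vertex is used by every copy (each $F_i$ contains the full common image of $R$), and $F_i$ is induced. So the task is to find $\ell$ of the $\lambda$ copies whose internal (non-leaf) parts $I_1,\dots,I_\lambda$ send no edges of $G$ to one another. The plan is a Ramsey argument combined with $K_{s,s}$-freeness.

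Let $k=|V(T)\setminus R|$ and fix an ordering $w_1,\dots,w_k$ of the internal vertices of $T$. For $i<j$, colour the pair $\{i,j\}$ by the set of ordered pairs $(a,b)\in[k]^2$ for which the image of $w_a$ in $F_i$ is adjacent in $G$ to the image of $w_b$ in $F_j$. This uses at most $2^{k^2}$ colours. Choose $\lambda$ to be the multicolour Ramsey number guaranteeing a monochromatic clique of size $N=\max(\ell,2s)$ (it depends only on $T,\ell,s$; if the statement insists that $\lambda$ not depend on $s$, one can take $N=\ell+s$ or note $s$ is fixed throughout). This gives an index set $J$ with $|J|=N$ on which all pairs $i<j$ receive the same colour $P$.

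If $P=\emptyset$, then any $\ell$ of these copies form a copy of $F^\ell_R$ that is induced in $G$, and we are done. Otherwise pick $(a,b)\in P$. Take indices $i_1<\dots<i_{2s}$ in $J$, and let $u_t$ be the image of $w_a$ in $F_{i_t}$ and $v_t$ the image of $w_b$ in $F_{i_t}$. Every $u_t$ with $t\le s$ is then adjacent to every $v_{t'}$ with $t'>s$. The sets $\{u_1,\dots,u_s\}$ and $\{v_{s+1},\dots,v_{2s}\}$ are disjoint, since they lie in distinct copies whose non-root parts are disjoint, and each has $s$ distinct elements. Hence they span a $K_{s,s}$ in $G$, which is a contradiction. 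So $P=\emptyset$ is forced.

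The main obstacle is just being careful that internal vertices of distinct copies are genuinely distinct. This holds because the $F_i$ are vertex-disjoint outside the common image of $R$. The only other point to settle is the dependence of $\lambda$ on $s$, which we make explicit (taking $\lambda=\lambda(T,\ell,s)$ if needed).
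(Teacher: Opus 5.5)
Your proof is correct and is essentially the paper's argument: you apply Ramsey to pairs of copies coloured by the adjacency pattern between their internal vertices, and $K_{s,s}$-freeness rules out a monochromatic $2s$-clique whose pattern is nonempty. The only differences are cosmetic. The paper colours each adjacent pair by one arbitrarily chosen witness $(a,b)$ and takes $\lambda=R_{q^2+1}(\ell,2s,\dots,2s)$, so its $\lambda$ also depends on $s$, as it necessarily must. Your suggestion $N=\ell+s$ would not remove this dependence, but writing $\lambda=\lambda(T,\ell,s)$ is the right fix.
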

\begin{proof}
Suppose $V(F)\setminus R=\{v_1,\dots, v_q\}$. By definition, $F^*$ consists of induced copies $F_1,\dots F_\lambda$ of $F$ in $G$
such that for each $v\in R$ the image of $v$ in the $F_i$'s are the same and that the $F_i$'s
are pairwise vertex disjoint outside the common image of $R$. For each $r\in [q], i\in [\lambda]$ let $v_r^{(i)}$ denote the image of
$v_r$ in $F_i$. Let $\lambda=R_{q^2+1}(\ell, 2s,\dots, 2s)$, i.e. the minimum $n$ such that every $(q^2+1)$-coloring of $E(K_n)$ 
yields either a monochromatic $K_\ell$ in the first color or a monochromatic $K_{2s}$ in another color.

Let $H$ be an edge colored graph on $\{F_1, \dots F_\lambda\}$ using $q^2$ colors defined as follows. 
For any $1\leq i<j\leq \lambda$, if $G$ contains some edge between $V(F_i)$ and $V(F_j)$, we place an edge between $F_i$ and $F_j$, and then we color the edge in $H$ with color $(a, b)$ if the edge in $G$ is between $v^{(i)}_a$ and $v^{(j)}_b$. If there are many colors we could pick, we fix one arbitrarily. 

We note that $H$ has no monochromatic clique of size $2s$. Indeed, suppose it has a clique of some color $(a,b)$ on vertices $F_{t_1}, F_{t_2},
\dots, F_{t_{2s}}$. Then $v_a^{i}v_b^{j}\in E(G)$ for any $i\in\{t_1,\dots, t_s\}$ and $j\in \{t_{s+1},\dots, t_{2s}\}$, contradicting $G$ being
$K_{s,s}$-free. 
By our choice of $\lambda$, $H$ must contain an independent set of size $\ell$. These $\ell$ corresponding $F_i$'s together form an induced copy of $F_R^\ell$ in $G$.
\end{proof}


\section{Proof of Theorem \ref{thm:height-two}}

In this section, we prove Theorem \ref{thm:height-two}. This is the most technical section of the paper.
While we use the framework of Conlon and Janzer \cite{CJ}, since we are embedding induced subgraphs, things need
to be set up more delicately in order for us to use the induced embedding tools developed in the last couple of sections.
Throughout the section, we let $T=T_{r,t}$. Let $x$ denote the central vertex of $T$ and we view it as the root of $T$.
Let $y_1,\dots, y_r$ denote the children of $x$. For each $i\in [r]$, let $z_{i,1},\dots, z_{i,t}$ denote the children of $y_i$. Let $R$ denote the set of leaves of $T$. 
Let $\ell$ be given. Let $H=T_R^\ell$.
Let $A$ be a sufficiently large constant depending on $r,t,\ell,s$. Let $G$ be an $n$-vertex $K_{s,s}$-free graph. Let $X,Y$ be a partition
of $V(G)$ and assume that $e(G[X,Y])\geq A n^{2-\frac{r+1}{rt+r}}$. We show that if $A$ is taken to be sufficiently large then
$G[X,Y]$ must contain a copy of $H$ that is induced in $G$. 

Suppose for contradiction that $G[X,Y]$ does not contain a copy of $H$ that is
induced in $G$. Let $\alpha=1-\frac{r+1}{rt+r}=\frac{rt-1}{rt+r}$ and let $K=2^{\frac{4}{\alpha}+2}$ as in Lemma \ref{lem:almost-reg},
By the lemma, $G[X,Y]$ contains an $m$-vertex $K$-almost-regular induced subgraph $L$ with $e(L)\geq \frac{A}{4}m^{1+\alpha}$, where $m=\Omega(n^{\frac{\alpha}{2\alpha+4}})$. Let $X_L=X\cap V(L)$ and $Y_L=Y\cap V(L)$. Let $G'=G[V(L)]$. Note that $G'[X_L,Y_L]=L$.
Let $d$ denote the minimum degree of $L$. By our assumption, 
\begin{equation}\label{eq:L-assumptions}
d\geq \frac{A}{2K}m^\alpha=\frac{A}{2K}m^{\frac{rt-1}{rt+r}} \mbox{ and }\Delta(L)\leq Kd.
\end{equation}
Let $C(H,s)=s(4h)^{s+1}$ as in Proposition \ref{prop:heavy-stars}. Let $\ve = \left(\frac{1}{4K}\right)^{2t + r}$, and $C_1$ be as in Proposition~\ref{prop:heavy-stars}. Note that for $A$ sufficiently large, $d \geq C_1$, which allows us to apply Proposition~\ref{prop:heavy-stars} to $L$ later.

Let $\F$ denote the family of labeled copies of $T$ in $L$ that are induced subgraphs of $G$. 
Note that $|V(F_{r,t})|=rt+r+1$. By choosing $A$ to be sufficiently large,
we can ensure that $d\geq s(rt+r+1)^22^{(rt+r+1)+6}K^{rt+r}$ and hence by Lemma \ref{lem:countingtrees},
\begin{equation} \label{eq:F-lower}
|\F|\geq m(d/2)^{rt+r}.
\end{equation}

\begin{definition}
{\rm Given a $(t+1)$-star $D$ in $L$ with leaf set $S$, we say
that $D$ is {\it $C(H,s)$-heavy} if $|N^*_L(S)|\geq C(H,s)$; otherwise we say that
$D$ is {\it $C(H,s)$-light}.}
\end{definition}

\begin{definition}
{\rm We call a member of $\F$ {\it admissible} if it does not contain any $C(H,s)$-heavy $(t+1)$-star in $L$. }
\end{definition}

Let $\F'$ denote the family of admissible members of $\F$.  
Note that $H$ is 
a bipartite graph in which the neighborhood hypergraph induced by one part is  $(t+1)$-partite
$(t+1)$-uniform. Since $L$ does not contain a copy of $H$ that is induced in $G$, by Proposition \ref{prop:heavy-stars},
the number of $C(H,s)$-heavy $(t+1)$-stars in $L$ is at most $\varepsilon md^{t+1}$.
Since $\Delta(G)\leq Kd$, the number of members of $\F$ that contain a $C(H,s)$-heavy $(t+1)$-star in $L$ is at most $\varepsilon md^{t+1}(Kd)^{rt+r-(t+1)}
\leq \varepsilon K^{rt+r-t-1} m^{rt+r} < (1/2) m(\frac{d}{2})^{rt+r}$ for sufficiently small $\varepsilon$. Hence, we may assume
that 
\begin{equation} \label{eq:F-prime-bound}
|\F'|\geq \frac{1}{2}m\left(\frac{d}{2}\right)^{rt+r}\geq \frac{1}{2}\left(\frac{A}{4K}\right)^{rt+r} m^{rt},
\end{equation} 
where the last inequality uses \eqref{eq:L-assumptions}.

From now on, for any subfamily $\G$ of $\F'$ and for any vector $V$ of at most $rt$ vertices, we will use
$\G_V$ to denote the subfamily of members of $\G$ in which the image of the subvector of $\langle z_{1,1},\dots, z_{1,t},
\dots, z_{r,1},\dots, z_{r,t}\rangle$ consisting of the first $|V|$ entries equals $V$. 

\begin{lemma} \label{lem:robust-cleaning}
There exists a subfamily $\F''\subseteq \F'$ and index $i\in [r]$ such that 
\begin{enumerate}
\item For every vector $Z$ of $rt$ vertices in $L$ such that $\F''_Z$ is nonempty, all members of $\F''_Z$ map $y_i$ to the same vertex $v$.
\item $|\F''|\geq \frac{1}{16 \lambda r^2(r + 1)}
(\frac{A}{4K})^{rt+r} m^{rt+r}$.
\item  For every vector $Z$ of $rt$ vertices in $L$ such that $\F''_Z$ is nonempty, $|\F''_Z|\geq\frac{1}{32 \lambda r^2(r + 1)}
(\frac{A}{4K})^{rt+r}$.
\item For every proper
subtree $D$ contained in at least one member of $\F''$, the number of members of $\F''$ containing $D$ is at least $\beta d^{rt+r-e(D)}$, for $\beta = \frac{1}{K^{rt + r} 2^{3rt +3 r + 5} \lambda r^2(r + 1)}$. 
\end{enumerate}
\end{lemma}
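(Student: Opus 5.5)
The plan is to pass from $\F'$ to $\F''$ in three stages: first force property 1 for each leaf-vector $Z$, then choose a single index $i$ by pigeonhole, and finally run an iterative deletion that gives properties 3 and 4. Deleting members never destroys property 1, so only the last stage has to balance competing requirements. Throughout, $\lambda=\lambda(T,\ell)$ is the constant from Lemma \ref{lem:semi-induced-to-induced}. I will also use \eqref{eq:L-assumptions} to convert $md^{rt+r}$ into a constant times $m^{rt}$. (Under \eqref{eq:L-assumptions} this product is of order $m^{rt}$, not $m^{rt+r}$, so I read the exponent in property 2 accordingly.)

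\emph{Step 1: a transversal for each $Z$.} Fix a vector $Z$ of $rt$ vertices with $\F'_Z\neq\emptyset$. Each member of $\F'_Z$ has exactly $r+1$ vertices outside $Z$, namely the images of $x,y_1,\dots,y_r$. Take a maximal subfamily of members that are pairwise vertex-disjoint outside $Z$. If it had $\lambda$ members, they would form a semi-induced copy of $T^\lambda_R$ in $G$, because each member of $\F$ is induced in $G$. Lemma \ref{lem:semi-induced-to-induced} would then give a copy of $H$ in $L$ that is induced in $G$, a contradiction. So the maximal subfamily has fewer than $\lambda$ members, and its non-root vertices form a set $W_Z$ of at most $\lambda(r+1)$ vertices meeting every member of $\F'_Z$ outside $Z$. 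By pigeonhole over $v\in W_Z$ and over the $r+1$ possible positions, there are a vertex $v$ and a position $p\in\{x,y_1,\dots,y_r\}$ such that at least $|\F'_Z|/(\lambda(r+1)^2)$ members of $\F'_Z$ map $p$ to $v$.

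\emph{Step 2: reducing to a position $y_i$.} If $p=y_i$ we are done for this $Z$. The delicate case, and the one I expect to be the main obstacle, is $p=x$. Here admissibility is used. Once $x$ and $z_{1,1},\dots,z_{1,t}$ are fixed, the image of $y_1$ is the centre of a $(t+1)$-star with leaf set $S=\{x,z_{1,1},\dots,z_{1,t}\}$ contained in an admissible member. That star is $C(H,s)$-light, so $y_1$ must lie in $N^*_L(S)$, which has fewer than $C(H,s)$ vertices. Hence among the members with $x\mapsto v$, some single image of $y_1$ occurs in at least a $1/C(H,s)$ fraction of them. In either case we obtain, for each $Z$, an index $i_Z$ and a vertex $v_Z$ capturing at least $c_0|\F'_Z|$ members of $\F'_Z$, where $c_0$ depends only on $\lambda,r,s,h$. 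The $C(H,s)$ factor does not appear in the stated constants, so either the authors avoid it by a finer argument or the constants in properties 2--4 need to absorb it; I would accept adjusted constants here.

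\emph{Step 3: choosing $i$ and cleaning.} Summing over $Z$ and applying pigeonhole over $i\in[r]$, I fix one index $i$ for which the retained members, those in classes with $i_Z=i$ that map $y_i$ to $v_Z$, number at least $c_0|\F'|/r$. Call this family $\F_0$; by construction it satisfies property 1, and by \eqref{eq:F-prime-bound} it is large. I then repeat two deletion operations until neither applies.
\begin{enumerate}
\item Delete an entire class $\F_Z$ whose size is below the threshold in property 3.
\item Delete all members containing a proper subtree $D$ that lies in fewer than $\beta d^{rt+r-e(D)}$ members.
\end{enumerate}
Each $Z$ is deleted at most once under operation 1. There are at most $m^{rt}$ vectors $Z$, so operation 1 removes at most $m^{rt}$ times the property-3 threshold in total, which is at most a quarter of $|\F_0|$. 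For operation 2, there are at most $m(Kd)^{e(D)}$ labelled copies of each subtree shape $D$, and each deletion removes fewer than $\beta d^{rt+r-e(D)}$ members. Summing over the at most $2^{rt+r}$ shapes gives a total of at most $\beta K^{rt+r}2^{rt+r}md^{rt+r}$, which is at most a quarter of $|\F_0|$ by the choice of $\beta$. So when the process stops, the surviving family $\F''$ keeps at least half of $\F_0$, which gives property 2. Properties 3 and 4 hold because the process has stopped, and property 1 survives because we only ever deleted members.
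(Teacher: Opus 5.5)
Your argument is correct and follows the paper's skeleton. For each $\F'_Z$ you take a transversal of fewer than $\lambda(r+1)$ vertices via Lemma~\ref{lem:semi-induced-to-induced}, pigeonhole onto a fixed image of some $y_i$, pigeonhole over $i$, and then run the same two deletion rules with the same counting. You are also right that property 2 should read $m^{rt}$; that is what the paper's proof actually produces.

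The one real difference is how you handle the position $x$. You pigeonhole over all $r+1$ positions. If $x$ wins, you pass to a fixed image of $y_1$ using the lightness of a single star, which costs a factor $C(H,s)$.

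The paper avoids this loss in two steps:
\begin{itemize}
\item It first deletes every class with $|\F'_Z|\le 2\lambda(r+1)[C(H,s)]^r$. There are at most $m^{rt}$ classes, so this removes at most half of $\F'$ once $A$ is large.
\item It then applies admissibility to all $r$ branches at once. With $Z$ and the image $v$ of $x$ fixed, every $y_j$ lies in $N^*_L(\{v,z_{j,1},\dots,z_{j,t}\})$, a set of fewer than $C(H,s)$ vertices. Hence at most $[C(H,s)]^r$ members of $\F'_Z$ send $x$ to $v$.
\end{itemize}
Thus members meeting the transversal only at $x$ make up at most half of the surviving $\F'_Z$. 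Pigeonholing the rest over pairs (vertex, $i$) gives a $\frac{1}{2\lambda r(r+1)}$ fraction with no dependence on $C(H,s)$.

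Your route is slightly simpler, since it needs no preliminary deletion. However, it only proves the lemma with the constants in properties 2--4 divided by roughly $C(H,s)$. In particular, your Step 3 claims that each kind of removal costs at most a quarter of $|\F_0|$ do not follow from your lower bound on $|\F_0|$ when the property-3 threshold and $\beta$ are as stated. For type-1 removals, for instance, you would need $(r+1)C(H,s)\le 4r$. These claims hold only after those constants are rescaled.

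That rescaling is harmless for Theorem~\ref{thm:height-two}, because the later constants $\eta,\beta,\gamma$ only need to be independent of $A$. Still, the paper's pre-deletion step is what yields the constants exactly as stated.
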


\begin{proof}

Let $\lambda=\lambda(T,\ell)$ as given in Lemma \ref{lem:semi-induced-to-induced}. Suppose there is a vector $Z$ of $rt$ vertices for which $|\F'_Z| \leq  2\lambda(r+1)[C(H,s)]^r$. Remove from $\F'$ all the members of $\F'_Z$ from $\F'$. Continue this until all vectors $Z$ satisfy that either $\F'_Z = \emptyset$ or $|\F'_Z| \geq 2\lambda(r+1)[C(H,s)]^r$. For $A$ sufficiently large, no more than $\frac{1}{2}|\F'|$ members of $\F'$ have been removed. 
Consider any vector $Z$ of $rt$ vertices for which $\F'_Z\neq \emptyset$.  If we take a maximal collection $\D$ of members of $\F'_Z$
that are pairwise vertex disjoint outside $Z$, then $|\D|\leq \lambda$. Otherwise, $L$ contains a copy of $F^\lambda_Z$ that is semi-induced in $G$ and by Lemma \ref{lem:semi-induced-to-induced}, such a copy contains a copy of $F^\ell_R$ that is induced in $G$, contradicting our assumption. Hence, there is a set $S_Z$ of fewer than $\lambda(r+1)$ vertices outside $Z$ such that each member of $\F'_Z$ contains a vertex in $S_Z$. If some vertex $v$ in $L$ is the image of $x$ in more than $[C(H,s)]^r$ of the members of $\F'$, then these members contain a $C(H,s)$-heavy 
$(t+1)$-star  in $L$ with leaves $v$ and the images of $z_{i,1},\dots, z_{i,t}$ in $Z$ for some $i\in [r]$, contradicting these members being admissible. If $|\F'_Z|\geq 2\lambda(r+1)[C(H,s)]^r$, at least $\frac{1}{2}|\F'_Z|$ of the members of $\F'_Z$ uses a vertex in $S_Z$ as the image of one of $\{y_1,\dots, y_r\}$. 
By the pigeonhole principle, there exists some $i\in [r]$ such that at least $\frac{1}{2\lambda r(r+1)} |\F'_Z|$ members of $\F'_Z$ map $y_i$
to the same vertex $v$ in $S_Z$. Let $\cH_Z$ denote the subfamily of these members of $\F'_Z$,
and let  $c(Z)=i$. For each $i\in [r]$,
let $\Z_i$ denote the set of vectors $Z$ of $rt$ vertices with $c(Z)=i$.
By the pigeonhole principle, there exists some $i\in [r]$ such that $|\bigcup_{Z\in \Z_i} \F'_Z|\geq (1/r) |\F'|$. Let $\F''=\bigcup_{Z\in \Z_i} \cH_Z$.
Then 
\begin{equation} \label{eq:F-triple-prime}
|\F''|\geq \frac{1}{2\lambda r^2(r + 1)}|\F'|\geq \frac{1}{8\lambda r^2(r + 1)} \left(\frac{A}{4K}\right)^{rt+r} m^{rt}.
\end{equation}

Note that by definition, $\F''$ satisfies condition 1.

We iteratively clean $\F''$ using the following rules.
For convenience, we still denote the remaining subfamily of $\F''$ at 
each step by $\F''$.
\begin{enumerate}
\item If there is a vector $Z$ of $rt$ vertices such that $|\F''_Z|\leq \frac{1}{32 \lambda r^2(r + 1)}
(\frac{A}{4K})^{rt+r}$,
we remove all the members of $\F''_Z$ from $\F''$.
\item If there exists a proper subtree $D$ of some member of $\F''$ such that
the number of members of $\F''$ containing $D$ is fewer than $\beta d^{rt+r-e(D)}$
then remove all the members of $\F''$ containing $D$.
\end{enumerate}

Note that total number of members of $\F''$ removed by type 1 removals is at most $\frac{1}{32 \lambda r^2(r + 1)}
(\frac{A}{2K})^{rt+r} m^{rt}$.
Since $\Delta(L)\leq Kd$, the total number of members of members of $\F''$ removed by
type 2 removals is at most $2^{rt + r} \sum_{i=0}^{rt+r-1} m(Kd)^i\cdot \beta d^{rt+r-i}
\leq ((2K)^{rt+r}\beta) m d^{rt+r}$. By \eqref{eq:F-prime-bound}, by choosing $A$ to be sufficiently large, we can upper bound the total number of removals by $\frac{1}{32 \lambda r^2(r + 1)}
(\frac{A}{4K})^{rt+r} m^{rt}$. For the final $\F''$ we have $|\F''|\geq \frac{1}{16 \lambda r^2(r + 1)}
(\frac{A}{4K})^{rt+r} m^{rt}$.

\end{proof}

Now, let us fix such an $\F''$, and suppose without loss of generality the $i$ given by Lemma~\ref{lem:robust-cleaning} is $r$. 
By averaging, there exists a vector $V$ of $(r-1)t$ vertices such that 
\begin{equation} \label{eq:F-triple-prime-bound}
|\F''_U|\geq \frac{1}{16 \lambda r^2(r + 1)} \left(\frac{A}{4K}\right)^{rt+r} m^{t}.
\end{equation}

Let us fix such a $U$. Let $X$ denote the set of images of $x$ in the members of $\F''_U$ and $Y$ the set of images of $y_r$ in the members of $\F''_U$. Next, we prove a few properties about $\F''_U$.  If $U$, $V$ are vectors, let $U\vee V$ be the vector obtained by attaching $V$ to the end of $U$.
\begin{lemma} \label{lem:A-upper}
$|\F''_U|\leq |X|(Kd)^{t+1}\cdot (C(H,s))^r$.
\end{lemma}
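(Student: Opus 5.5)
We prove the bound by charging each member of $\F''_U$ to the image of $x$ and then counting how the remaining vertices can be placed. Fix a member $F\in\F''_U$. Its vertices outside the prefix $U$ are: the image of $x$, the images of $y_1,\dots,y_r$, and the images of $z_{r,1},\dots,z_{r,t}$. The images of $z_{i,j}$ for $i<r$ are already fixed by $U$. We will specify $F$ through a sequence of choices and bound the number of options at each step.

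\textbf{Step 1: the centre and the last branch.} We first choose the image $v\in X$ of $x$, which gives at most $|X|$ options. Next we choose the image of $y_r$ as a neighbour of $v$ in $L$, and then the images of $z_{r,1},\dots,z_{r,t}$ as neighbours of that vertex. Since $\Delta(L)\le Kd$, this gives at most $(Kd)^{t+1}$ options. At this point the full vector $Z=U\vee\langle \text{images of } z_{r,1},\dots,z_{r,t}\rangle$ of $rt$ vertices is determined.

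\textbf{Step 2: the remaining middle vertices.} For each $i<r$, the image of $y_i$ is a common neighbour in $L$ of $v$ and of the $t$ fixed images of $z_{i,1},\dots,z_{i,t}$. These $t+1$ vertices are the leaves of a $(t+1)$-star in $L$ centred at the image of $y_i$. Every member of $\F''\subseteq\F'$ is admissible, so this star is $C(H,s)$-light. Hence the common neighbourhood $N^*_L$ of these $t+1$ leaves has size less than $C(H,s)$, and each $y_i$ has fewer than $C(H,s)$ options.

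\textbf{Conclusion.} Over the $r-1$ indices $i<r$, Step 2 contributes at most $C(H,s)^{r-1}\le C(H,s)^r$ options. Multiplying the counts from both steps gives
\[
|\F''_U|\le |X|\,(Kd)^{t+1}\,C(H,s)^r.
\]

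We could also bound the $y_r$ choice by $C(H,s)$, using admissibility of the star formed by $v$ and $z_{r,1},\dots,z_{r,t}$, or by using property 1 of Lemma \ref{lem:robust-cleaning} once $Z$ is fixed. This is not needed for the stated bound.

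The only subtle point is the choice of order: $x$ and the last branch must be chosen first, so that for each $i<r$ all $t+1$ leaves of the star at $y_i$ are fixed before $y_i$ is chosen. Once that is done, admissibility applies directly and the count is a straightforward product.
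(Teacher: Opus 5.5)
Your proof is correct and follows the paper's argument: fix the image of $x$ in $X$, bound the choices for the last branch by $(Kd)^{t+1}$ using $\Delta(L)\le Kd$, and use admissibility (lightness of the $(t+1)$-star centred at each image of $y_i$, with leaves the images of $x,z_{i,1},\dots,z_{i,t}$) to bound each middle vertex by $C(H,s)$. The only cosmetic difference is bookkeeping: the paper counts the vectors $V$ for $\langle z_{r,1},\dots,z_{r,t}\rangle$ and then charges all $r$ middle vertices, including $y_r$, a factor $C(H,s)$. You instead absorb the choice of $y_r$ into the $(Kd)^{t+1}$ factor, which gives the slightly sharper $C(H,s)^{r-1}$.
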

\begin{proof}
Let $v\in X$. Among members of $\F''_U$ that has $x$ mapped to $v$ there are at most $(Kd)^{t+1}$ different vectors $V$ of $t$ vertices that
$\langle z_{r,1},\dots, z_{r,t}\rangle$ can be mapped to. For each such $V$, since members of $\F''_U$ are admissible, there are at most 
$(C(H, s))^r$ members of $\F''_U$ that map $\langle z_{1,1},\dots, z_{r,t}\}$ to $U\vee V$.
\end{proof}

By Lemma \ref{lem:A-upper} and \eqref{eq:F-triple-prime-bound}, we have
\begin{equation}\label{eq:xislarge}
|X|\geq \eta A^{rt + r}\frac{m^t}{d^{t+1}}, 
\end{equation}
 where  $\eta =\frac{1}{16 \lambda r^2 (r + 1) K^{t+1}[C(H,s)]^r}(\frac{1}{4K})^{rt+r}$ is independent of $A$.
 
\begin{lemma} \label{lem:F-triple-lower-bound}
We have $|\F''_U|\geq \beta |X|d^{t+1}$.
\end{lemma}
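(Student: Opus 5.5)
The plan is to use property 4 of Lemma~\ref{lem:robust-cleaning}, the robustness condition, applied to a suitable proper subtree for each vertex of $X$. Fix $v\in X$. By definition of $X$ there is some member $F_0\in\F''_U$ mapping $x$ to $v$. Consider the subtree $D_v$ of $F_0$ consisting of the image of $x$ together with the images of the branches $y_1,\dots,y_{r-1}$ and all their children $z_{j,k}$ ($j\le r-1$, $k\in[t]$). This is the subtree spanned by $x$ and the branches whose leaves are fixed by $U$, and it omits $y_r$ and its children. Since $D_v$ omits $y_r$, it is a proper subtree of a member of $\F''$, with $e(D_v)=(r-1)(t+1)$ edges. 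Property 4 therefore gives at least $\beta d^{rt+r-e(D_v)}=\beta d^{t+1}$ members of $\F''$ containing $D_v$.

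The key observation is that every member of $\F''$ containing $D_v$ (with the labelling that extends $D_v$) maps $z_{1,1},\dots,z_{r-1,t}$ to the entries of $U$, since $D_v$ contains those leaves with their labels. Such a member therefore lies in $\F''_U$ and maps $x$ to $v$. If needed, I will interpret ``containing $D$'' in property 4 as containing $D$ as a labeled subtree in the same positions, which is how the type 2 cleaning counts it; the bound $m(Kd)^i\beta d^{rt+r-i}$ used there is consistent with this labeled count. Members with different images of $x$ are distinct, so summing over $v\in X$ gives $|\F''_U|\ge\sum_{v\in X}\beta d^{t+1}=\beta|X|d^{t+1}$.

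The only delicate point is making sure property 4 really applies with the labeled, positional notion of containment, so that the members it counts agree with $F_0$ on the $U$-coordinates and on $x$. I would check this against the cleaning procedure in the proof of Lemma~\ref{lem:robust-cleaning}. That procedure removes all members containing an under-represented subtree $D$, and the counting bound there enumerates $D$ by a root in $m$ choices and then by successive labeled extensions, so it is indeed the labeled version. With that settled, the lemma follows immediately.
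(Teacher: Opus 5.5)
Your proof is correct and is essentially the paper's argument: for each vertex of $X$ the paper takes the image of the subtree of $T$ obtained by deleting $y_r, z_{r,1},\dots,z_{r,t}$, applies condition 4 of Lemma~\ref{lem:robust-cleaning} to get $\beta d^{t+1}$ extensions, and observes that all of them lie in $\F''_U$. Your explicit treatment of labeled containment and of distinctness across different images of $x$ spells out what the paper leaves implicit.
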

\begin{proof}
Let $S$ denote the subtree of $T$ induced by $V(T)\setminus \{y_r, z_{r,1},\dots, z_{r,t}\}$.
By definition of $X$, there are at least $|X|$ many different images of $S$
contained in members of $\F''_U$. By Lemma \ref{lem:robust-cleaning} condition 4, each such image extends to $\beta d^{t+1}$ members of $\F''$
all of which are in $\F''_U$ by the definition of $\F''_U$.
\end{proof}

We define a subfamily of $\F''_U$ as follows.
For each $V \in [V(G)]^t$, we call $V$ {\it good} if $|\F''_{U\vee V}| \geq \frac{|\F''_U|}{2m^t}$, and we call $V$ {\it bad} otherwise. Let $\F^*_U=\bigcup\{ \F''_{U\vee V}:
\mbox{ $V\in [V(G)]^t$ is good}\}$. By definition, $|\F^*_U| \geq \frac{1}{2}|\F''_U|$. 
Let $M$ be the subgraph of $L$ with $V(M)=X\cup Y$ whose edges are
the images of $xy_r$ in the members of $\F^*_U$. Since $L$ is bipartite, $M$ is bipartite with $(X,Y)$
being a bipartition.  Consider any edge $x'y$ in $M$, where $y\in Y$ is the image of $y_r$ in some member
$T\in \F''_{U\vee V}$ where $V\in [V(G)]^t$ is good. 
Since $T\in \F''_{U\vee V}$, by our
assumption, every member of $\F''_{U\vee V}$ 
maps $y_r$ to $y$. Consider the set $E$ of the images of $xy_r$ in these members. 
Since members of $\F''_{U\vee V}$ are admissible, $|E|\geq \frac{|\F''_{U\vee V}|}{[C(H,s)]^{r-1}}$. Hence, 
\begin{equation}\label{eq:M-min-degree}
d_M(y)\geq |E|\geq \frac{|\F''_{U\vee V}|}{[C(H,s)]^{r-1}}\geq \frac{|\F''_U|}{2m^t[C(H,s)]^{r-1}}\geq  \frac{\gamma |X|d^{t+1}}{m^t},
\end{equation}
where $\gamma=\frac{\beta}{2[C(H,s)]^{r-1}}$.
Note that $e(M)\geq |\F''_U|/ [C(H,s)^{r - 1}(Kd)^t])$ since each image of $xy_r$ can be contained in at most $C(H,s)^{r - 1}(Kd)^t$ many members of $\F_U''$.
Hence by Lemma \ref{lem:F-triple-lower-bound}, 
\[e(M)\geq \frac{\beta}{C(H,s)^{ r- 1}K^t} |X|d.\]

Applying Proposition \ref{prop:asymmetric} with $\delta_Y =\frac{\gamma |X|d^{t+1}}{m^t}$, $p=t+1$,
we have
\[e(M)\delta_Y^t\geq \alpha |X|^{t+1} \frac{d^{t^2+t+1}}{m^{t^2}},\]
where $\alpha=\frac{\beta\gamma^t}{C(H,s)K^t}$. Note that $H$ is an one side $(t+1)$-bounded bipartite graph.
Let $C_2, C_3$ be the constants returned by Proposition \ref{prop:asymmetric} for this $H$.
Since $d\geq \frac{A}{2K}m^{\frac{rt-1}{rt+r}}$, and $r\geq t+2$, one can check that $(t^2+t+1)\frac{rt-1}{rt+r}\geq t^2$.  By choosing $A$ to be sufficiently large, 
we can ensure $e(M)\delta_Y^t \geq C_3 |X|^{t+1}$. Furthermore, since $|X|\geq  \eta A^{rt + r}\frac{m^t}{d^{t+1}}$, we have that $\delta_Y \geq C_2$ by taking $A$ sufficiently large. 

Thus, by Proposition \ref{prop:asymmetric}, $L$ contains a copy of $H$ that is
induced in $G$, a contradiction.  This completes the proof of Theorem \ref{thm:height-two}.

\hfill $\Box$





\section{Proof of Theorem \ref{thm:asminus1}}

In this section, we let $T=T_{r, 1, 1}$ denote the tree formed by taking a spider of height $2$ with $r$ legs and then adding one leaf to the center. More specifically, $T$ has vertex set $\{a,b_1,\dots, b_r, c_1,\dots, c_r, b_{r+1}\}$ and edge set $\{ab_i:i\in [r]\}\cup\{b_ic_i: i\in [r]\}\cup \{ab_{r+1}\}$. Let $R$ be the set of leaves of $T$.
Let $\ell$ be given. Let $H=T_R^\ell$.
Let $A$ be a sufficiently large constant depending on $r,\ell,s$. Let $G$ be an $n$-vertex $K_{s,s}$-free graph. Let $X,Y$ be a partition
of $V(G)$ and assume that $e(G[X,Y])\geq A n^{2-\frac{r+1}{2r+1}}$. We show that if $A$ is taken to be sufficiently large then
$G[X,Y]$ must contain a copy of $H$ that is induced in $G$. 

Suppose for contradiction that $G[X,Y]$ does not contain a copy of $H$ that is
induced in $G$. Let $\alpha=1-\frac{r+1}{2r+1}=\frac{r}{2r+1}$ and let $K=2^{\frac{4}{\alpha}+2}$ as in Lemma \ref{lem:almost-reg},
By the lemma, $G[X,Y]$ contains an $m$-vertex $K$-almost-regular induced subgraph $L$ with $e(L)\geq \frac{C}{4}m^{1+\alpha}$, where $m=\Omega(n^{\frac{\alpha}{2\alpha+4}})$. Let $X_L=X\cap V(L)$ and $Y_L=Y\cap V(L)$. Let $G'=G[V(L)]$. Note that $G'[X_L,Y_L]=L$.
Let $d$ denote the minimum degree of $L$. By our assumption, 
\[d\geq \frac{A}{2K}m^\alpha=\frac{A}{2K}m^{\frac{r}{2r+1}} \mbox{ and }\Delta(L)\leq Kd.\]
Let $\ve = \frac{1}{3r}(4K)^{-3r}$.

Let $\lambda=\lambda(T,\ell)$ as given in Lemma \ref{lem:semi-induced-to-induced}.
Let $C = \lambda sr (rK \ve^{- 1})^{ r + s + 2} 2^{6 + 3s + 4r}$. 
 we will say a path $xyz$ in $L$ is {\it $C$-heavy} 
with respect to $L$ if the $|N_L^*(x, z)|\geq C$.
Let $\cH$ denote the family of paths of length two in $L$ that are induced in $G$
and are $C$-heavy with respect to $L$.
We will choose $A = \max\{ 2K (r^2 2^{4r +4}sK^{2r} \ve^{-r})^s, 8K(6r^2C^{r + 1}\lambda )^{\frac{1}{2r + 1}}\}$, and thus 
$d\geq (r^2 2^{4r +4}sK^{2r} \ve^{-r})^s$.

\begin{lemma}\label{lem:fewheavypaths}
We have $|\cH| \leq \ve m d^2$.
\end{lemma}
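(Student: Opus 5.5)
We argue by contradiction, mirroring Proposition~\ref{prop:heavy-stars}. We bound, for each vertex $v$, the number of $C$-heavy induced paths centered at $v$, and then sum over $v$. Fix $v$ and work inside $N_L(v)$, which has size between $d$ and $Kd$. Let $\D_v$ be the graph on $N_L(v)$ whose edges are the pairs $\{x,z\}$ such that $xvz$ is an induced path in $G$ and $|N^*_L(x,z)|\ge C$. Since $|N_L(v)|\le Kd$, if $|\cH|>\ve md^2$ then some $v$ has $e(\D_v)>\ve d^2 \ge (\ve/K^2)\binom{|N_L(v)|}{2}$. So it suffices to show that such a dense $\D_v$ forces an induced copy of $H$ in $L$.

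To do this, note that $H=T_R^\ell$ has a bipartition $(A',B')$ with $A'=\{a\text{-copies}\}\cup\{c_i\}\cup\{b_{r+1}\}$ on one side and the $b_i$'s ($i\le r$) on the other. Each $b_i$ has degree exactly $2$, with neighbors the root-vertex copy of $a$ and the leaf $c_i$. Hence $\F_{A'}(H)$ is a $2$-uniform, bipartite multigraph. The vertex $b_{r+1}$ is a leaf in $R$ and is adjacent to each copy of $a$; it lies on the other side, but it has degree $\ell$.

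The issue is this: to treat $H$ as one-side $2$-bounded, we should take the side containing all $b_i$, $i\le r$, as $B$, together with the $\ell$ copies of $a$ as the side of the hypergraph. But then the common leaf $b_{r+1}$ has degree $\ell$. So instead I would take the bipartition with $A=\{a^{(j)}\}\cup\{c_i\}$ and $B=\{b_i^{(j)}\}\cup\{b_{r+1}\}$. This is not $2$-bounded, because of $b_{r+1}$. I would handle $b_{r+1}$ separately as follows. By the Erd\H{o}s theorem for bipartite graphs, $\ex(N,\F_A(H')[m])=o(N^2)$ where $H'=H-b_{r+1}$. So for $d$ large, $\D_v$ contains an $m$-blowup of $\F_A(H')$, and all its vertices lie in $N_L(v)$. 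We then run the random-embedding argument of Lemma~\ref{lem:rich-independent} with $v$ playing the role of the image of $b_{r+1}$. Every vertex of $A$ is mapped into $N_L(v)$, so $v$ is automatically adjacent to all of $\phi(A)$. We need $v$ to be adjacent to the images of the $a^{(j)}$, but \emph{not} to the images of the $c_i$. Therefore I would instead define $\D_v$ so that one endpoint ranges over $N_L(v)$ and the other over $N_{L}(w)\setminus N_G(v)$ for the appropriate side. Concretely, the $c_i$ images should come from the set of vertices $z$ on the same side as $x$ that are non-adjacent to $v$ in $G$, with $\{x,z\}$ heavy. The heavy-pair graph is then a bipartite graph between $N_L(v)$ and $X$-side non-neighbors of $v$.

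Because of this asymmetry, I would instead count heavy induced paths $xyz$ by their endpoint $x$ (the would-be $a$), rather than by the center. For a fixed $x$, consider the pairs $(y,z)$ with $y\in N_L(x)$ and $z$ a heavy partner of $x$. If $x$ has more than about $\ve d^2/K$ heavy partners $z$ with $|N^*_L(x,z)|\ge C$, we proceed as follows. Using Lemma~\ref{lem:small-bad-set}, the sets $N^*_L(x,z)$ mostly avoid $N_G$ of any fixed vertex. We pick $\ell$ copies via the Ramsey argument of Lemma~\ref{lem:semi-induced-to-induced}, with $\lambda$ built into $C$. Then for each of the $\ell$ copies of $a$ we choose the $c_i$'s as heavy partners, and the $b_i$'s inside common neighborhoods. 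Before that, the common leaf $b_{r+1}$ must be chosen in the common neighborhood of the $a$-copies, and this requires the $a$-copies themselves to have a large common neighborhood, which comes from $K_{s,s}$-freeness-compatible averaging (Lemma~\ref{lem:JQ-lem}) applied inside $N_L(v)$ for a suitable $v$.

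The main obstacle is exactly this coupling: simultaneously getting many $a$-copies with a common neighbor $v$, and, for each of them, $r$ heavy partners outside $N_G(v)$. My first attempt would be the center-based count, restricted to $v$ with dense $\D_v$. Inside $N_L(v)$, Erd\H{o}s' theorem gives a large complete bipartite blowup $K_{m,m}$-type structure of heavy pairs, whose left side supplies the $a$'s and whose right side supplies candidate $c$'s. The trouble is that the $c$'s must not be adjacent to $v$. I would resolve this by replacing $v$ with a different common neighbor $v'$ of the left side chosen outside $\bigcup B(\cdot)$, again via Lemma~\ref{lem:small-bad-set}. Finally, the $b_i$'s are embedded via the Hall-plus-greedy step of Lemma~\ref{lem:rich-independent}, and the threshold $C$ is large enough for this.
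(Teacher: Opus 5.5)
Your opening step matches the paper's: average to find a vertex $v$ with $\ge \ve d^2$ heavy induced paths centred at $v$, and build the heavy-pair graph on $N_L(v)$. You also correctly identify the real difficulty. The image of $b_{r+1}$ must be adjacent to every image of $a$ but to none of the shared images of $c_1,\dots,c_r$, so $v$ itself cannot play this role. But you never resolve this, and the fix you finally propose does not work.

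After Erd\H{o}s' theorem gives you a blow-up of heavy pairs inside $N_L(v)$, you want to replace $v$ by ``a different common neighbor $v'$ of the left side chosen outside $\bigcup B(\cdot)$'' via Lemma~\ref{lem:small-bad-set}. Nothing guarantees that such a $v'$ exists.
\begin{itemize}
\item The left side consists of arbitrary vertices of $N_L(v)$, and they may have $v$ as their only common neighbor.
\item Once there are at least $s$ of them (e.g.\ $\lambda$ of them, as needed for Lemma~\ref{lem:semi-induced-to-induced}), $K_{s,s}$-freeness leaves them fewer than $s$ common neighbors. So there is no large pool from which to pick $v'$ avoiding bad sets and avoiding the chosen $c$'s.
\item Lemma~\ref{lem:small-bad-set} only bounds how many vertices have many neighbors in a large set; it cannot produce a common neighbor.
\item For the same reason, your appeal to Lemma~\ref{lem:JQ-lem} for a ``large common neighborhood'' of the $a$-copies conflicts with $K_{s,s}$-freeness.
\end{itemize}
The endpoint-based count and the bipartite redefinition of $\D_v$ are only sketched and never carried out.

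The missing idea is to reverse the order of choices and use the heaviness threshold $C$ to \emph{find} the $b_{r+1}$-vertex before choosing the $a$'s. The paper does this as follows.
\begin{itemize}
\item Clean $\D$ to $\D'$ with minimum degree $\frac{\ve}{2K}d$.
\item For each heavy pair $xz\in E(\D')$, take the $\ge C$ vertices $w\in N^*_L(x,z)$ lying outside the bounded-size sets $B_0$ (vertices with $\ge \frac{\ve}{8K}d$ neighbors in $V(\D')$), $I(x)$ and $I(z)$.
\item Average the resulting $\ge \ve C d^2/4$ paths $xwz$ over the $\le K^2d^2$ pairs $(x,w)$. This yields a $w$ and a set $Z\subseteq N_L(w)\cap V(\D')$ with $|Z|\ge \ve C/(4K^2)$.
\item Since $w\notin B_0\cup I(z)$, each $z\in Z$ keeps $\ge \frac{\ve}{8K}d$ $\D'$-neighbors that are non-adjacent to $w$ and with $w\notin B(z,\cdot)$. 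These supply the $c_i$'s.
\item Pigeonhole the $r$-stars centred in $Z$ over the $\le (Kd)^r$ leaf vectors. This gives $\lambda$ centres with a common leaf vector; this is exactly where $C\gg\lambda$ is used.
\item Choose the $b_i$'s greedily in $N^*_L(z,x_\ell)$ and finish with Lemma~\ref{lem:semi-induced-to-induced}.
\end{itemize}
In your outline, heaviness is used only to embed the $b_i$'s. Without an argument that produces a vertex adjacent to many $a$-candidates while missing their heavy partners, you do not reach a contradiction.
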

\begin{proof}
Suppose for contradiction that $|\cH|>\ve md^2$.
By averaging, there is a vertex $v$ which is the middle vertex of at least $\ve d^2$ many 
members of $\cH$. We form an auxiliary graph $\D$ on $N_L(v)$ by joining $xy$ if $xvy\in \cH$. 
Then $e(\D)=|\cH|\geq \ve d^2$ and $|V(\D)|\leq Kd$. Since $|V(\D)|\leq Kd$, by iteratively removing a vertex of degree less than $\frac{\ve}{2K}d$,
we can find a subgraph $\D'$ with minimum degree at least $\frac{\ve}{2K}d$ such that
$e(\D')\geq \frac{1}{2} \ve d^2$.

Let 
\[B_0=\{w\in V(L): |N_G(w)\cap V(\D')| \geq \frac{\ve}{8K}d \}.\]
Since $|V(\D')|\geq \frac{\ve}{2K}d$, $d\geq s(16K^2 \ve^{-1})^{s+1}$, and $|V(\D')| \leq Kd$,  
by Lemma~\ref{lem:small-bad-set}, $|B_0|\leq 16sK^2 \ve^{-1}$. 

Given vertices $x,y\in V(\D')$, let $B(x,y)=\{w\in V(L): |N_G(w)\cap N^*_L(x,y)|\geq \frac{1}{4r} |N^*_L(x,y)|$. Since $G$ is $K_{s,s}$-free and $|N^*_L(x,y)|\geq C\geq s(8r)^s$, 
by Lemma \ref{lem:small-bad-set}, $|B(x,y)|\leq 8sr$ for any $x,y\in V(\D')$.
Given a vertex $x\in V(\D')$ and a vertex $w\in V(L)$, we say that $w$ is {\it infeasible} for  $x$  if $w\in B(x,y)$ for at least $\frac{1}{2}|N_{\D'}(x)|$ many $y \in N_{\D'}(x)$.
Letting $I(x)$ be the set of infeasible vertices for $x$, we have 
\[|I(x)|\cdot \frac{1}{2}|N_{\D'}(x)|\leq \sum_{y\in N_{\D'}(x)} |B(x,y)| \leq 8sr |N_{\D'}(x)|.\]
Hence $|I(x)|\leq 16sr$ for every $x\in V(\D')$.
Let $\Pc$ be a subfamily of members of $\cH$ obtained as follows.
For all $xz\in E(\D')$, we include a member $xwz\in \cH$ in $\Pc$
if $w\notin B_0\cup I(x)\cup I(z)$.
Since $e(\D')\geq \frac{\ve}{2} d^2$ and $C \geq 64 srK^2 \ve^{-1}$,
by our discussion above,  $|\Pc| \geq \frac{\ve}{2}d^2 C/2$.

By averaging, there exists a $xw$ that is in at least $\frac{\ve C}{4K^2}$ members of $\Pc$
of the form $xwz$. Fix such $xw$ and let $Z$ denote the set of $z$ in these $xwz$.  
Let $A(z) = \{v \in N_{\D'}(z) \setminus N_G(w) : w \not \in B(z, v)\}$. 
By our minimum degree condition,  $w\notin B_0$, and $w$ is not infeasible for $z$, for each $z\in Z$, we have $|A(z)| \geq \frac{\ve}{8K}d$. 
For each $z\in Z$, let $\T_z$ denote the collection of labeled $r$-stars in $\D'$ with center $z$ and leaves in $A(z)$. 
We note that $|\T_z|\geq \prod_{i = 0}^{r - 1}(\frac{\ve}{8K} d - i) \geq (\frac{\ve}{16K})^r$.
Observe that by construction, if $S\in \T_z$, there is no edge in $G$ between $\{w,z\}$ and $V(S)\setminus \{z\}$
and the only potential edges are inside $V(S) \setminus \{z\}$. 

Since $G$ is $K_{s,s}$-free, by Lemma \ref{lem:KST},
$G[A(z)]$ has at most $2s(K d)^{2 - 1/s}$ edges. Hence, the number of members of $\T_z$ that contains two leaves
that are adjacent in $G$ is at most $r^2sK^{r}d^{r - 1/s}$. Since $d \geq (r^2 2^{4r +4}sK^r \ve^{-r})^s$, we have at least $\frac{1}{2}  (\frac{\ve}{16K}d)^r$ members of $\T_z$ whose leaves induce no edge in $G$. Let $\Sc_z$ denote the collection of these members.
We have $\sum_{z\in Z} |\Sc_z|\geq |Z| \frac{1}{2}  (\frac{\ve}{16K}d)^r$.
Since the number of $r$-tuples in $\D'$ is at most $(Kd)^r$, there is some $r$-tuple $\langle x_1,\dots, x_r\rangle$ that is the leaf vector of  at least $\frac{\ve C}{4K^2}\frac{1}{2}(\frac{\ve}{16K})^r \geq \lambda$ members of $\bigcup_{z\in Z} \Sc_z$.

Let $K_1, \dots K_\lambda$ be $\lambda$ of these members. Recall that $T$ has vertex set $\{a,b_1,\dots, b_r, c_1,\dots, c_r, b_{r+1}\}$ and edge set $\{ab_i:i\in [r]\}\cup\{b_ic_i: i\in [r]\}\cup \{ab_{r+1}\}$. Using $K_1,\dots, K_\lambda$ we will build copies $T_1,\dots, T_\lambda$ of $T$ in $L$ that are induced subgraphs of $G$ which all map $c_1,\dots, c_r, b_{r+1}$ to $x_1,\dots, x_r, w$, respectively, but are
otherwise vertex disjoint from each other. But then by Lemma \ref{lem:semi-induced-to-induced}, $L$ would contain a copy of $H$ that is induced in $G$, contradicting our assumption. Thus, it suffices to show that we can find these $T_i$'s.
Let $i\in [\lambda]$. Suppose we have found $T_1,\dots, T_{i-1}$, we explain how to find $T_i$.
Let $z$ be the center of $K_i$. In $T_i$, $z$ will play the role of $a$. For each $\ell\in [r]$,  recall that $B(z, x_\ell)$ is the set of vertices $y$ in $L$ such that $|N_G(y) \cap N^*_L(z,x_\ell))| \geq \frac{1}{4r} |N^*_L(z, x_\ell) |$. We have shown earlier that $|B(z, x_\ell)| \leq 8sr$. We pick $y_1,\dots, y_r$ iteratively as follows, so that each $y_\ell$ will play the role of $b_\ell$. 
For each $\ell \in [r]$,  supposing we have picked $y_1,\dots, y_{\ell-1}$, 
let $y_\ell$ be any element from $$[N^*_L(z, x_\ell)\setminus N_G(w)] \setminus  \left(\bigcup_{j = 1}^{\ell - 1} N_G(y_j) \cup  \bigcup_{j = 1}^r B(z, x_j) \cup \bigcup_{j = 1}^{i - 1} V(T_j) \cup \{y_1, \dots y_{\ell - 1}\}\right).$$ Note that $|N_G(w) \cap N^*_L(z, x_\ell)|\leq \frac{1}{4r}|N^*_L(z, x_\ell)|$ by our choice of $z, x_\ell$ satisfying $w \not \in B(z, x_\ell)$. Since $y_1, \dots y_{\ell - 1} \not \in B(z, x_\ell)$, we have that $|(\bigcup_{j = 1}^{i - 1}N_G(y_i)) \cap N^*_L(z, x_\ell)| \leq \frac{1}{4} |N^*_L(z, x_\ell)|$. Since $|B(z, x_j)|\leq 8sr$ for each $j$ and and we have at most selected $\lambda(8r)$ vertices so far in this process, as long as $C \geq 64sr\lambda$, we have enough choices for each $y_\ell$ in succession.
This completes the proof of Lemma \ref{lem:fewheavypaths}.
\end{proof}

Now, we are ready to complete the proof of Theorem \ref{thm:asminus1}.
Let $\F$ denote the family of labeled copies of $T$ in $L$ that are induced subgraphs of $G$. Since we chose $A$ to make $d$ sufficiently large,
by Lemma~\ref{lem:countingtrees}, 
\[|\F|\geq m\left(\frac{d}{2}\right)^{2r + 1}.\]
By Lemma \ref{lem:fewheavypaths}, $|\cH|\leq \ve m d^2$. So the number of members of $\F$ that contains a member of $\cH$
is at most $3r \ve m d^2 (Kd)^{2r - 1}$. Let $\F'$ denote the subfamily of members of $\F$ that do not contain any member of $\cH$.
Then, as $d \geq \frac{A}{2K} m^{\alpha}$ and $A$ is sufficiently large,
\[|\F'|\geq \frac{1}{2} m \left(\frac{d}{2}\right)^{2r + 1} \geq  \frac{1}{2} \left(\frac{A}{4K} \right)^{2r + 1} m^{r + 1} \]
By the pigeonhole principle, we have find a vector $Z$ of $r+1$ vertices in $L$ that is the leaf vector of at least $(\frac{|A|}{8K})^{2r + 1}$ 
members of $\F'$, call the collection of these members $\F_Z'$.
 If we take a maximal collection $\D$ of members of $\F_Z'$
that are pairwise vertex disjoint outside $Z$, then $|\D|< \lambda$. Otherwise, $L$ contains a copy of $T^\lambda_R$ that is semi-induced in $G$ and by Lemma \ref{lem:semi-induced-to-induced}, such a copy contains a copy of $T^\ell_R$ that is induced in $G$, contradicting our assumption about $L$. Hence, there is a set $S_Z$ of fewer than $\lambda(r+1)$ vertices outside $Z$ such that each member of $\F_Z'$ contains a vertex in $S_Z$.  

Consider any vertex $v$ in $S_Z$.
Since no member of $\F_Z'$ contains a path of length two that is $C$-heavy with respect to $L$, a vertex in $S_Z$
can play the role of $a$ in at most $C^r$ members of $\F_Z'$. Also, for each $i\in [r]$. $v$ can play the role of $b_i$
in at most $C^r$ members of $\F_Z'$. Hence, we must have $|\F_Z'|\leq \lambda(r+1)^2 C^r<(A/8K)^{2r+1}$, contradicting our earlier claim about $\F_Z'$. This completes the proof of Theorem \ref{thm:asminus1}. \hfill $\Box$



\end{document}